\documentclass[12pt,righttagstyle]{amsart}

\usepackage{amssymb,latexsym}

\textwidth15cm \textheight21cm \evensidemargin.2cm
\oddsidemargin.2cm

\addtolength{\headheight}{3.2pt}

\def\C{{\mathbb C}}

\def\tub{T_\Omega}

\newtheorem{thm}{Theorem}[section]
\newtheorem{prop}[thm]{Proposition}
\newtheorem{cor}[thm]{Corollary}
\newtheorem{lem}[thm]{Lemma}
\newtheorem{defn}[thm]{Definition}
\newtheorem{remark}[thm]{Remark}

\begin{document}

\title[Duren-Carleson theorem]{The Duren-Carleson theorem in tube
  domains over symmetric cones}

\author[D. B\'ekoll\'e]{David B\'ekoll\'e}
\address{Department of Mathematics, Faculty of Science, University of Ngaound\'er\'e\\ P.O.Box 454, Ngaound\'er\'e, Cameroon }
\email{{\tt dbekolle@univ-ndere.cm}}
\author[B. F. Sehba]{Benoit F. Sehba}
\address{Department of Mathematics\\ University of Ghana\\ P.O. Box LG62, Legon, Accra, Ghana}
\email{{\tt bfsehba@ug.edu.gh}}
\author[E. L. Tchoundja]{Edgar L. Tchoundja}
\address{Department of Mathematics, Faculty of Science, University of Yaound\'e I\\ P.O.Box 812, Yaound\'e, Cameroon}
\email{{\tt tchoundjaedgar@yahoo.fr}}

\subjclass{} \keywords{Symmetric cones, Hardy spaces, Bergman
spaces.}

\begin{abstract}
In the setting of tube domains over symmetric cones,  we determine a necessary and sufficient condition on a Borel measure $\mu$ so that  the Hardy space $H^{p}, \hskip 2truemm 1\leq p < \infty,$ continuously embeds in  the weighted Lebesgue space $L^q (d\mu)$ with a larger exponent. Finally we use this result to characterize multipliers from $H^{2m}$ to Bergman spaces for every positive integer $m$.
\end{abstract}
\maketitle
\section{Introduction and statements of the results}

Let $T_\Omega=V+i\Omega$ be the tube domain over an irreducible
symmetric cone $\Omega$ in the complexification $V^{\mathbb{C}}$ of a
Euclidean space $V$ of dimension $n$. Following the notation of
\cite{FaKo} we denote the rank of the cone $\Omega$ by $r$ and by
$\Delta$ the determinant function of $V$. Letting $V=\mathbb{R}^n$, we
have as example of symmetric cone on $\mathbb{R}^n$ the Lorentz cone
$\Lambda_n$ which is a rank 2 cone defined for $n\ge 3$ by
$$\Lambda_n=\{(y_1,\cdots,y_n)\in \mathbb{R}^n: y_1^2-\cdots-y_n^2>0,\,\,\,y_1>0\};$$
the determinant function in this case is given by the Lorentz form
$$\Delta(y)=y_1^2-\cdots-y_n^2.$$
\vskip .2cm
For $0<q<\infty$ and $\nu \in \mathbb{R}$, let
$L^q_\nu(T_\Omega)=L^q(T_\Omega,\Delta^{\nu-\frac{n}{r}}(y)dxdy)$
denote the space of measurable functions $f$ 
satisfying the condition
$$\|f\|_{q,\nu}=||f||_{L^q_\nu(T_\Omega)}:=\left(\int_{T_\Omega}|f(x+iy)|^q\Delta^{\nu-\frac{n}{r}}(y)dxdy\right)^{1/q}<\infty.$$ 
Its closed subspace consisting of holomorphic functions in $T_\Omega$ is the weighted Bergman space $A^q_\nu(T_\Omega)$. This space
is not trivial i.e $A^q_\nu(T_\Omega)\neq \{0\}$ only for
$\nu>\frac{n}{r}-1$ (see \cite{DD}, cf. also \cite{BBGNPR}). The Bergman projector $P_\nu$ is the orthogonal projector from the Hilbert-Lebesgue space $L^2_\nu(T_\Omega)$ to its closed subspace $A^2_\nu(T_\Omega).$ The usual (unweighted) Bergman space
$A^q(T_\Omega)$ corresponds to the case $\nu=\frac{n}{r}$.

 Without loss of generality, we may assume that $V=\mathbb R^n$ endowed with the standard inner product, and we shall apply this notation in the rest of the paper. By $H^p(T_\Omega), \hskip 2truemm 0<p<\infty$, we denote the holomorphic Hardy space on
the tube domain that is the space of holomorphic functions $f$
such that
$$\|f\|_{H^p}=\left(\sup_{t\in\Omega}\int_{\mathbb{R}^n}
|f(x+it)|^p dx\right)^{1/p}<\infty.$$
Let $0< p, \hskip 1truemm q<\infty.$ Our purpose is to characterize those positive Borel measures $\mu$ on $T_\Omega$ for which the Hardy space $H^p(T_\Omega)$ is continuously embedded into the Lebesgue space $L^q (T_\Omega, d\mu).$ We recall that given two Banach spaces of functions $X$ and $Y$ with respective norms $\|\cdot\|_X$ and $\|\cdot\|_Y$, it is said that $X$ continuously  embeds into $Y$ ($X\hookrightarrow Y$), if there exists a constant $C>0$ such that for any $f\in X$, $$\|f\|_Y\leq C\|f\|_X.$$
Taking $X=H^p(T_\Omega)$ and $Y=L^q (T_\Omega, d\mu)$ in the last inequality, then  on the one hand, for all $0< p, \hskip 1truemm q<\infty, $ an obvious example of such a measure $\mu$ is the delta measure $\delta = \delta_{i\mathbf e}$ at the point $i\mathbf e,$ where $\mathbf e$ is a fixed point of $\Omega.$ On the other hand, testing on the functions 
$$G(z) = G_w (z) := [\Delta^{-\nu - \frac nr} (\frac {z-\bar w}{2i})]^{\frac 1q},$$
with $w=u+iv \in T_\Omega,$ we obtain that a necessary condition for the embedding of the Hardy space $H^p(T_\Omega)$ into the Lebesgue space $L^q (T_\Omega, d\mu)$ is the existence of a positive constant $C_{p, q, \mu}$ such that
\begin{equation}\label{nec}
\int_{T_\Omega} |\Delta^{-\nu - \frac nr} (\frac {z-\bar w}{2i})|d\mu (z) \leq C_{p, q, \mu}\Delta^{-(\nu + \frac nr) + \frac {nq}{rp}} (v)
\end{equation}
whenever 
$$(\nu + \frac nr)\frac pq > \frac {2n}r - 1.$$
 The first result is an extension  of a result due to O. Blasco \cite{Bl} (cf. also \cite{BH}) valid on the unit disc, to the case of $T_\Omega$.

\begin{thm}\label{Blasco}
 Let $\mu$ be a Borel measure on $T_\Omega.$ If $p, q, \nu$ are real numbers satisfying the
conditions
\begin{enumerate}
\item[(i)] $0 < p < q, \hskip 2truemm \frac qp > 2 - \frac rn,$
\item[(ii)] $(\nu + \frac nr) \frac pq > (\frac {2n}r - 1),$
\end{enumerate}
then
\begin{enumerate}
\item  $H^p (T_\Omega)$ continuously embeds in $A^q_{\frac nr (\frac qp -1)} (T_\Omega)$\\
if and only if
\item
the condition
(\ref{nec})
implies that $H^p (T_\Omega)$ continuously embeds in $L^q (T_\Omega, d\mu).$
\end{enumerate}
\end{thm}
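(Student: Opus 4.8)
The plan is to prove the two implications by reducing the testing condition (\ref{nec}) to a Carleson-box inequality, and by identifying the target space $A^q_{\nu_0}(T_\Omega)$, where I put $\nu_0:=\tfrac nr\bigl(\tfrac qp-1\bigr)$, with the holomorphic subspace of the weighted Lebesgue space $L^q(T_\Omega,d\mu_0)$ attached to the model measure $d\mu_0:=\Delta^{\nu_0-\frac nr}(y)\,dV$ (here $dV=dxdy$ is Lebesgue measure on $T_\Omega$); note that hypothesis (i) is precisely the nontriviality condition $\nu_0>\tfrac nr-1$ for this space. I would use the standard analysis on $T_\Omega$ freely: for a fixed $\delta\in(0,1)$ the Bergman (Kobayashi) balls $B_\delta(z)$ satisfy $\zeta\in B_\delta(z)\Leftrightarrow z\in B_\delta(\zeta)$, have Lebesgue volume $|B_\delta(z)|\asymp\Delta^{\frac{2n}r}(y)$ (with $z=x+iy$), and obey $\Delta(\eta)\asymp\Delta(y)$ and $\bigl|\Delta\bigl(\tfrac{z-\bar\zeta}{2i}\bigr)\bigr|\asymp\Delta(y)$ for $\zeta=\xi+i\eta\in B_\delta(z)$; every holomorphic $f$ satisfies the submean-value estimate $|f(z)|^q\le C\,\Delta^{-\frac{2n}r}(y)\int_{B_\delta(z)}|f|^q\,dV$, valid for every $q>0$ because $|f|^q$ is plurisubharmonic; and one has the integral formulas $\int_{\mathbb{R}^n}\bigl|\Delta^{-s}\bigl(\tfrac{x+iy}{i}\bigr)\bigr|\,dx=c\,\Delta^{\frac nr-s}(y)$ for $s>\tfrac{2n}r-1$ and $\int_\Omega\Delta^{-\beta}(y+v)\,\Delta^{\alpha-\frac nr}(y)\,dy=c\,\Delta^{\alpha-\beta}(v)$ for $\alpha>\tfrac nr-1$, $\beta-\alpha>\tfrac nr-1$.

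The first step is to observe that any $\mu$ satisfying (\ref{nec}) obeys the box inequality
$$\mu\bigl(B_\delta(\zeta)\bigr)\lesssim\Delta^{\frac{nq}{rp}}(\eta),\qquad\zeta=\xi+i\eta\in T_\Omega.$$
This comes from putting $w=\zeta$ in (\ref{nec}), keeping only the portion of the integral over $B_\delta(\zeta)$, on which $\bigl|\Delta^{-(\nu+\frac nr)}\bigl(\tfrac{z-\bar\zeta}{2i}\bigr)\bigr|\asymp\Delta^{-(\nu+\frac nr)}(\eta)$, and cancelling the $\nu$-powers; in particular $\mu$ is then finite on Bergman balls, hence $\sigma$-finite. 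Now suppose (1) holds, and fix a $\mu$ satisfying (\ref{nec}) and an $f\in H^p$. The submean-value estimate, followed by Tonelli's theorem and the symmetry $\{z:\zeta\in B_\delta(z)\}=B_\delta(\zeta)$, gives
$$\int_{T_\Omega}|f(z)|^q\,d\mu(z)\le C\int_{T_\Omega}|f(\zeta)|^q\Bigl(\int_{B_\delta(\zeta)}\Delta^{-\frac{2n}r}(\mathrm{Im}\,z)\,d\mu(z)\Bigr)\,dV(\zeta);$$
since $\Delta(\mathrm{Im}\,z)\asymp\Delta(\eta)$ for $z\in B_\delta(\zeta)$, the inner integral is $\lesssim\Delta^{-\frac{2n}r}(\eta)\,\mu\bigl(B_\delta(\zeta)\bigr)\lesssim\Delta^{\frac{nq}{rp}-\frac{2n}r}(\eta)=\Delta^{\nu_0-\frac nr}(\eta)$ by the box inequality. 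Hence $\int_{T_\Omega}|f|^q\,d\mu\lesssim\|f\|_{A^q_{\nu_0}}^q\lesssim\|f\|_{H^p}^q$, the last inequality being exactly hypothesis (1), so $H^p\hookrightarrow L^q(T_\Omega,d\mu)$, which is assertion (2).

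For the reverse implication it suffices to check that the model measure $d\mu_0$ itself satisfies (\ref{nec}); then (2) yields $H^p\hookrightarrow L^q(T_\Omega,d\mu_0)$, and restricting to holomorphic functions --- for which $\|f\|_{L^q(d\mu_0)}=\|f\|_{A^q_{\nu_0}}$ --- is exactly assertion (1). To check (\ref{nec}) for $\mu_0$ I would evaluate $\int_{T_\Omega}\bigl|\Delta^{-(\nu+\frac nr)}\bigl(\tfrac{z-\bar w}{2i}\bigr)\bigr|\,\Delta^{\nu_0-\frac nr}(y)\,dV(z)$ in two stages: integrating in $x\in\mathbb{R}^n$ first (legitimate since $\nu+\tfrac nr>\tfrac{2n}r-1$ by (i)--(ii)) produces, via the first integral formula, $c\,\Delta^{-\nu}(y+v)$; integrating this in $y\in\Omega$ against $\Delta^{\nu_0-\frac nr}(y)$, via the beta-type formula with $\alpha=\nu_0$ and $\beta=\nu$ --- whose ranges $\nu_0>\tfrac nr-1$ and $\nu-\nu_0>\tfrac nr-1$ are again forced by (i)--(ii) --- produces $c\,\Delta^{\nu_0-\nu}(v)=c\,\Delta^{-(\nu+\frac nr)+\frac{nq}{rp}}(v)$, which is precisely the right-hand side of (\ref{nec}).

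The only delicate matters here are bookkeeping. One must confirm that (i)--(ii) place every exponent inside the convergence ranges of the two integral formulas; concretely, that (ii) --- namely $(\nu+\tfrac nr)\tfrac pq>\tfrac{2n}r-1$ --- together with $\tfrac qp>2-\tfrac rn$ from (i) forces both $\nu>\tfrac nr-1$ and $\nu>\tfrac{nq}{rp}-1$, the second being a consequence of (ii) precisely because the elementary inequality $(\tfrac qp-1)(\tfrac nr-1)\ge0$ holds. Conceptually the main point is to recognize at the outset that (\ref{nec}) is \emph{linear} in $\mu$ and hence equivalent to a Carleson-box condition rather than to a stronger $L^q$-testing condition --- the symmetric-cone analogue of Blasco's observation on the disc --- which is exactly what restricts the equivalence to the range $q>p$. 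Beyond this I do not anticipate any serious obstruction.
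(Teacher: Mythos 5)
Your proposal is correct, and half of it coincides with the paper's argument: for the implication $(2)\Rightarrow(1)$ you verify, exactly as the paper does, that the model measure $d\mu_0=\Delta^{\frac nr(\frac qp-2)}(y)\,dx\,dy$ satisfies (\ref{nec}) by integrating the kernel first in $x$ and then in $y$ over $\Omega$, with the same bookkeeping of convergence ranges (including the observation that $(\frac qp-1)(\frac nr-1)\ge 0$ closes the gap between condition (ii) and the hypothesis $\nu-\nu_0>\frac nr-1$ of the beta-type formula). Where you genuinely diverge is in the implication $(1)\Rightarrow(2)$. The paper proves a global pointwise inequality (its Lemma 3.2), namely $|F(z)|^q\lesssim\int_{T_\Omega}|F(u+iv)|^q\,\Delta_{\nu-\frac nr}(v)\,|\Delta_{\nu+\frac nr}(\frac{z-\bar w}{2i})|^{-1}\,du\,dv$, and then applies Fubini--Tonelli against the \emph{full} testing condition (\ref{nec}); you instead first extract from (\ref{nec}) only its local consequence $\mu(B_\delta(\zeta))\lesssim\Delta^{\frac{nq}{rp}}(\eta)$ and run the submean-value/Tonelli argument purely over Bergman balls. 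Both are valid. Your route is more elementary (it avoids the global kernel estimate entirely) and yields the slightly stronger statement that the Carleson-box condition alone, rather than (\ref{nec}), already implies the embedding once $(1)$ holds --- which makes Blasco's point about the linearity of the testing condition explicit. What the paper's formulation buys in exchange is that its Lemma 3.2 and the Fubini step are written directly for vector-valued weights $\nu\in\mathbb R^r$, which is how the paper states and proves the more general Theorem 3.1; your box-condition reduction would need the corresponding generalized-power-function asymptotics $|\Delta_j(\frac{z-\bar\zeta}{2i})|\asymp\Delta_j(\eta)$ on Bergman balls (which the paper also records), so the extension is routine but not free.
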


\begin{remark}
\begin{enumerate}
\item
For $n=r=1$ (the case of the upper half-plane, $\Omega = (0, \infty)),$ assertion $2.$ of the theorem was proved by P. Duren \cite{D1} (cf. also \cite{D}), using a modification of the argument given by L. Carleson \cite{C} in the case $p=q=2;$ assertion $2.$ was proved earlier by Hardy and Littlewood \cite{HL}.
\item
 We restrict to the condition $q>p$ and even $\frac qp > 2 - \frac rn.$ Otherwise, the standard Bergman space $A^q_{\frac nr (\frac qp -1)} (T_\Omega)$ is trivial, that is $A^q_{\frac nr (\frac qp -1)} (T_\Omega)=\{0\}.$ The assertion 1. of the theorem is false. Nevertheless, we observed above that there are Borel measures $\mu$ on $T_\Omega$ such that $H^p \hookrightarrow L^q (T_\Omega, d\mu)$ (see the open question in Section 7).
\end{enumerate}
\end{remark}

In section 3, we shall prove Theorem 1.1 in a more general form where $\nu$ is a vector of $\mathbb R^r.$\\
\indent
Our next result is  the following Hardy-Littlewood Theorem.
\begin{thm}\label{main1} Let $4\leq p<\infty$. Then 
$H^2(T_\Omega)\hookrightarrow A^{p}_{\frac{n}{r}(\frac{p}{2}-1)}(T_\Omega)$.
\end{thm}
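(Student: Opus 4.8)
The plan is to prove Theorem~\ref{main1} first in the base case $p=4$ and then to propagate it to all $p>4$ by a bootstrapping argument; this is also why the method produces precisely the threshold $p=4$. Two elementary ingredients are used. The first is the pointwise bound
$$|f(z)|\le C\,\Delta^{-\frac{n}{2r}}(\mathrm{Im}\,z)\,\|f\|_{H^2},\qquad z\in T_\Omega,\ f\in H^2(T_\Omega),$$
which follows, by the affine homogeneity of $T_\Omega$, from the case $z=i\mathbf e$: there $|f(i\mathbf e)|^2\le C\int_B|f|^2\,dx\,dy\le C'\|f\|_{H^2}^2$ for a fixed ball $B\ni i\mathbf e$ with $\overline B\subset T_\Omega$, using the sub-mean-value inequality for the subharmonic function $|f|^2$ and $\sup_{t\in\Omega}\int_{\mathbb R^n}|f(x+it)|^2\,dx=\|f\|_{H^2}^2$; the power $\Delta^{-\frac{n}{2r}}(\mathrm{Im}\,z)$ then comes out because the linear part $A$ of the automorphism taking $i\mathbf e$ to $z$ satisfies $|\det A|=\Delta^{\frac{n}{r}}(\mathrm{Im}\,z)$. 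The second ingredient is the trivial fact that $f\in H^2(T_\Omega)$ implies $f^2\in H^1(T_\Omega)$ with $\|f^2\|_{H^1}=\|f\|_{H^2}^2$, which is immediate from the two definitions.

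Granting the case $p=4$, i.e.\ the embedding $H^2(T_\Omega)\hookrightarrow A^4(T_\Omega)=A^4_{n/r}(T_\Omega)$, the case $p>4$ follows at once. For $f\in H^2(T_\Omega)$ we split $|f|^p=|f|^4\,|f|^{p-4}$ in
$$\|f\|^p_{A^p_{\frac{n}{r}(\frac{p}{2}-1)}}=\int_{T_\Omega}|f(z)|^p\,\Delta^{\frac{n}{r}(\frac{p}{2}-2)}(\mathrm{Im}\,z)\,dx\,dy,$$
estimate $|f(z)|^{p-4}\le C\,\|f\|_{H^2}^{p-4}\,\Delta^{-\frac{n(p-4)}{2r}}(\mathrm{Im}\,z)$ by the pointwise bound, and observe that, since $\frac{n}{r}\bigl(\frac{p}{2}-2\bigr)=\frac{n(p-4)}{2r}$, the powers of $\Delta(\mathrm{Im}\,z)$ cancel exactly. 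What is left is $C\,\|f\|_{H^2}^{p-4}\int_{T_\Omega}|f|^4\,dx\,dy=C\,\|f\|_{H^2}^{p-4}\,\|f\|^4_{A^4(T_\Omega)}\le C'\|f\|_{H^2}^p$.

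It thus remains to prove $H^2(T_\Omega)\hookrightarrow A^4(T_\Omega)$, that is, $\int_{T_\Omega}|f|^4\,dx\,dy\le C\|f\|_{H^2}^4$ for $f\in H^2(T_\Omega)$. Putting $g=f^2\in H^1(T_\Omega)$ we have $\int_{T_\Omega}|f|^4\,dx\,dy=\int_{T_\Omega}|g|^2\,dx\,dy$, so it is enough to establish the embedding $H^1(T_\Omega)\hookrightarrow A^2(T_\Omega)$. Let $g\in H^1(T_\Omega)$, with boundary function $g^*\in L^1(\mathbb R^n)$ and spectrum $\hat g$ --- the restriction to $\Omega$ of the Fourier transform of $g^*$ --- which is supported in $\overline\Omega$ and satisfies $\|\hat g\|_\infty\le\|g^*\|_{L^1}\le\|g\|_{H^1}$. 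Since $g(\cdot+iy)\in L^1\cap L^\infty\subset L^2(\mathbb R^n)$ for every $y\in\Omega$, Plancherel in the $x$-variable together with the classical Gindikin formula $\int_\Omega e^{-\langle y,\xi\rangle}\,dy=c_\Omega\,\Delta^{-\frac{n}{r}}(\xi)$ for $\xi\in\Omega$ yields
$$\int_{T_\Omega}|g(x+iy)|^2\,dx\,dy=c\int_\Omega|\hat g(\xi)|^2\,\Delta^{-\frac{n}{r}}(\xi)\,d\xi\le c\,\|g\|_{H^1}\int_\Omega|\hat g(\xi)|\,\Delta^{-\frac{n}{r}}(\xi)\,d\xi.$$

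The remaining step, which I expect to be the principal obstacle, is the Hardy-type inequality
$$\int_\Omega|\hat g(\xi)|\,\Delta^{-\frac{n}{r}}(\xi)\,d\xi\le C\,\|g\|_{H^1(T_\Omega)},\qquad g\in H^1(T_\Omega),$$
the analogue on $T_\Omega$ of the classical Hardy inequality $\sum_{k\ge0}|a_k|/(k+1)\lesssim\|g\|_{H^1(\mathbb D)}$. The naive bound obtained from $\hat g\in L^2$ alone is useless here, since $\Delta^{-\frac{n}{r}}$ is not integrable on $\Omega$, so the cancellation inherent in membership of $H^1$ must be used; I would prove the inequality via the real-variable $H^1$ theory on $\mathbb R^n$ together with size and cancellation estimates for the Cauchy--Szeg\"o kernel of $T_\Omega$. (Equivalently, $H^1(T_\Omega)\hookrightarrow A^2(T_\Omega)$ is precisely assertion~1 of Theorem~\ref{Blasco} in the case $p=1$, $q=2$ --- whose hypotheses (i)--(ii) are readily met --- so one may instead verify there that condition~\eqref{nec} forces $H^1\hookrightarrow L^2(d\mu)$; either route rests on the same $L^2$/$H^1$ estimate.) Combining the Hardy inequality with the displayed identity closes the base case $p=4$, and with it Theorem~\ref{main1}.
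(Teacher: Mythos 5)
Your reduction to $p=4$ (the pointwise bound $|f(z)|\le C\Delta^{-\frac{n}{2r}}(\Im z)\|f\|_{H^2}$ plus the splitting $|f|^p=|f|^4\,|f|^{p-4}$) is correct and is in substance the paper's Lemma \ref{lem:bergbergembed}; the Plancherel identity $\int_{T_\Omega}|g|^2\,dx\,dy=c\int_\Omega|\hat g(\xi)|^2\Delta^{-\frac nr}(\xi)\,d\xi$ for $g=f^2$ is also the right starting point. But the step you yourself flag as ``the principal obstacle'' is a genuine gap, and it is the whole theorem. By writing $\int_\Omega|\hat g|^2\Delta^{-\frac nr}\le\|\hat g\|_\infty\int_\Omega|\hat g|\Delta^{-\frac nr}$ you trade a quantity that can be bounded by brute force for the Fourier--Hardy inequality $\int_\Omega|\hat g|\Delta^{-\frac nr}\lesssim\|g\|_{H^1(T_\Omega)}$, i.e.\ the embedding $H^1(T_\Omega)\hookrightarrow A^2(T_\Omega)$. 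That statement is strictly stronger than $H^2\hookrightarrow A^4$ (there is no factorization of $H^1$ functions into products of two $H^2$ functions on these domains), it is exactly the sort of $p<2$ Hardy--Littlewood statement the paper cannot prove in general (Theorem \ref{HLR} only handles restricted regions of the light cone, and the general case is posed as an open question), and deriving it from ``size and cancellation estimates for the Cauchy--Szeg\H{o} kernel'' is not realistic for cones of rank $\ge 2$, where that kernel is poorly controlled. The parenthetical appeal to Theorem \ref{Blasco} is circular: assertion 1 there with $p=1$, $q=2$ \emph{is} the embedding $H^1\hookrightarrow A^2$; that theorem only asserts an equivalence between its two clauses and proves neither.

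The fix is to never leave $L^2$, which is what the paper does. Since $F(z)=\int_\Omega e^{i(z|\xi)}f(\xi)\,d\xi$ with $f\in L^2(\Omega)$, one has $F^2(z)=\int_\Omega e^{i(z|u)}g(u)\,du$ with $g(u)=\int_{\Omega\cap(u-\Omega)}f(u-\xi)f(\xi)\,d\xi$, and Cauchy--Schwarz against the constant function on $\Omega\cap(u-\Omega)$, whose measure equals $B_\Omega(\frac nr,\frac nr)\Delta^{\frac nr}(u)$ by Lemma \ref{1.1'}, gives
$$|g(u)|^2\le C\,\Delta^{\frac nr}(u)\int_{\Omega\cap(u-\Omega)}|f(u-\xi)|^2|f(\xi)|^2\,d\xi,$$
whence $\int_\Omega|g(u)|^2\Delta^{-\frac nr}(u)\,du\le C\|f\|_{L^2(\Omega)}^4=C\|F\|_{H^2}^4$ by Fubini, and $F^2\in A^2$ by the Paley--Wiener theorem for $A^2_\nu$. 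No cancellation is needed anywhere.
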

In the case where $r=2$, it is possible to go below the power $p=4$. We have exactly the following.
\begin{thm}\label{main2} Let $r=2$ and $n=3, 4, 5, 6.$ Then
\begin{enumerate}
\item
$H^2 (T_{\Lambda_3}) \hookrightarrow A^p_{\frac {3p}4 - \frac 32} (T_{\Lambda_3})$ for all $\frac 83 < p < 4.$
\item
$H^2 (T_{\Lambda_4}) \hookrightarrow A^p_{p-2} (T_{\Lambda_4})$ for all $3 < p < 4.$
\item
$H^2 (T_{\Lambda_5}) \hookrightarrow A^p_{\frac {5p}4 - \frac 52} (T_{\Lambda_5})$ for all $\frac {16}5 < p < 4.$
\item
$H^2 (T_{\Lambda_6}) \hookrightarrow A^p_{\frac {3p}2 - 3} (T_{\Lambda_6})$ for all $\frac {10}3 < p < 4.$
\end{enumerate}
\end{thm}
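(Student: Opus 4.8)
Throughout, $r=2$ (so $\frac nr=\frac n2$), and one checks that the interval appearing in each case is exactly $\{p:p>4-\frac4n\}$, that is $\{p:\frac nr(\frac p2-1)>\frac nr-1\}$ --- precisely the range in which the target space $A^p_{\frac nr(\frac p2-1)}(T_{\Lambda_n})$ is non-trivial. Thus Theorem \ref{main2} extends the Hardy--Littlewood embedding of Theorem \ref{main1} to the whole admissible interval of exponents; set $\nu(p):=\frac nr(\frac p2-1)$.

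It is worth recalling first why the Fourier method behind Theorem \ref{main1} stops at $p=4$. By Paley--Wiener, $f\in H^2(T_\Omega)$ has the form $f(x+iy)=\int_\Omega e^{i\langle x+iy,\xi\rangle}g(\xi)\,d\xi$ with $g\in L^2(\Omega)$, $\|f\|_{H^2}\simeq\|g\|_{L^2(\Omega)}$; since $\widehat{f(\cdot+iy)^m}(\xi)=e^{-\langle y,\xi\rangle}g^{*m}(\xi)$, Plancherel in $x$ together with $\int_\Omega e^{-2\langle y,\xi\rangle}\Delta^{\beta-\frac nr}(y)\,dy=c_\beta\Delta^{-\beta}(\xi)$ (valid for $\beta>\frac nr-1$) reduces $H^2\hookrightarrow A^{2m}_{\nu(2m)}$, for an even exponent $p=2m$, to the $m$-linear inequality $\int_\Omega|g^{*m}(\xi)|^2\Delta^{-\frac nr(m-1)}(\xi)\,d\xi\le C\|g\|_{L^2(\Omega)}^{2m}$; the case $m=2$ is $p=4$, the higher even cases go the same way, and all $p>4$ follow by complex interpolation, legitimate because $\nu(p)/p=\frac n{2r}-\frac nr\cdot\frac1p$ is affine in $1/p$ and hence $\{A^p_{\nu(p)}\}$ is an interpolation scale. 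For $p<4$ there is no even exponent to start from, and interpolating $H^2\hookrightarrow A^4_{\nu(4)}$ downward along this scale would require an endpoint $A^{p_0}_{\nu(p_0)}=\{0\}$: a genuinely different argument is needed.

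For $p<4$ I would use Theorem \ref{Blasco} with the exponents $2$ and $p$ playing the roles of its $p$ and $q$; condition (i) of that theorem then reads $\frac p2>2-\frac rn$, i.e. $p>4-\frac4n$, which is exactly the stated range, and (ii) holds once $\nu$ is taken large, so the theorem reduces $H^2\hookrightarrow A^p_{\nu(p)}(T_{\Lambda_n})$ to showing that every positive Borel measure $\mu$ satisfying (\ref{nec}) gives $H^2\hookrightarrow L^p(T_{\Lambda_n},d\mu)$. This I would prove by the Carleson--Duren method: for $f\in H^2$ the non-tangential maximal function $N(f)$ lies in $L^2(\bR^n)$ with $\|N(f)\|_{L^2}\lesssim\|f\|_{H^2}$, and $|f(x+iy)|\le N(f)(x')$ whenever $x'$ lies in the shadow of $x+iy$, so $\{|f|>\lambda\}$ is the disjoint union of the tents over the connected components of $\{N(f)>\lambda\}$; if (\ref{nec}) can be converted into a geometric Carleson-box estimate $\mu(Q_\delta)\lesssim\phi(\delta)$, summing over these components yields $\mu\{|f|>\lambda\}\lesssim|\{N(f)>\lambda\}|^{p/2}$, whence $\int_{T_{\Lambda_n}}|f|^p\,d\mu\lesssim\int_0^\infty\lambda^{p-1}|\{N(f)>\lambda\}|^{p/2}\,d\lambda\lesssim\|N(f)\|_{L^2(\bR^n)}^{p}\lesssim\|f\|_{H^2}^p$, the first of these inequalities using only that a decreasing $h$ with $A=\int_0^\infty\lambda h\,d\lambda<\infty$ satisfies $h(\lambda)\le2A\lambda^{-2}$, so $\int_0^\infty\lambda^{p-1}h(\lambda)^{p/2}\,d\lambda\le(2A)^{p/2-1}A$ since $p/2>1$. (Alternatively, via $f\mapsto f^2$ --- so $f^2\in H^1$, $\|f^2\|_{H^1}=\|f\|_{H^2}^2$ and $\|f\|_{A^p_{\nu(p)}}^p=\|f^2\|_{A^{p/2}_{\nu(p)}}^{p/2}$ --- one reduces to the Hardy--Littlewood embedding $H^1(T_{\Lambda_n})\hookrightarrow A^{p/2}_{\nu(p)}(T_{\Lambda_n})$ with $\frac p2\in(2-\frac2n,2)$, attackable through a Whitney decomposition of $\Omega$ relative to $\Delta$.)

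The crux is the step converting (\ref{nec}) into the Carleson-box estimate. On the Lorentz cone the Bergman balls --- hence the relevant boxes --- are anisotropic and change shape according to the position of the base point relative to the vertex and to the smooth part of $\partial\Omega$, where $\Delta$ degenerates; one must describe them explicitly for $T_{\Lambda_n}$ (using $\Delta(y)=y_1^2-|y''|^2$ and the coordinates $y=(t,\rho\omega)$ with $t>\rho\ge0$, $\omega\in\bS^{n-2}$), read off $\phi(\delta)$ from (\ref{nec}), and check that $\sum_j\phi(\delta_j)$ over a Whitney covering converges --- and it is exactly this convergence, together with the non-triviality constraint on the weight, that pins down both the lower cutoff $p>4-\frac4n$ and the upper bound $n\le6$ (equivalently $\frac nr\le3$); for $n\ge7$ the accounting no longer closes by this route.
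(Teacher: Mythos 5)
Your reduction via Theorem \ref{Blasco} is logically admissible (assertion 2 of that theorem, applied to the specific measure $d\mu=\Delta^{\frac nr(\frac qp-2)}(y)\,dx\,dy$, does yield assertion 1), but it inverts the architecture of the paper and lands you on the hard side of the equivalence. The whole point of Theorem \ref{Blasco} is that the general Carleson embedding (assertion 2) is \emph{deduced from} the Hardy--Littlewood embedding (assertion 1) by the plurisubharmonicity lemma; no direct tent-type proof of assertion 2 is available in tube domains over cones. Your sketch of the Carleson--Duren machinery --- ``$\{|f|>\lambda\}$ is the disjoint union of the tents over the connected components of $\{N(f)>\lambda\}$'' --- is a one-dimensional statement: for $n\geq 2$ the components of an open subset of $\mathbb R^n$ are not boxes, the ``shadows'' and tents adapted to the admissible approach regions of $T_{\Lambda_n}$ are anisotropic objects whose shape depends on the position of $\Im z$ relative to the vertex and the smooth boundary of the cone, and the passage from (\ref{nec}) to a summable box estimate over a Whitney covering is precisely the step you defer (``the crux is\dots''). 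That step is the entire difficulty, it is not carried out, and the paper's open question (d) indicates that this circle of ideas is genuinely unresolved here. So the proposal is a plan, not a proof.

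Your diagnosis of where the constraints $p>4-\frac 4n$ and $n\leq 6$ come from is also not what happens. In the paper they do \emph{not} arise from a volume count over Whitney cubes: the lower cutoff coincides with the non-triviality threshold only by accident of the numerology, and the bound $n\leq 6$ comes from the sharp range of $L^p_\nu$-boundedness of the weighted Bergman projector $P_{\frac n2(\frac p2-1)}$ on the light cone (Theorem \ref{prop:Bergprojboundedness}, a consequence of the Bourgain--Demeter $\ell^2$-decoupling theorem). The paper's actual route for $2<p<4$ is operator-theoretic: Corollary \ref{cor:BoxH2A4} and Proposition \ref{prop:BoxH2A2} show that $\Box^{(m)}$ maps $H^2$ boundedly into $A^4_{4m+\frac nr}$ and isometrically into $A^2_{2m}$; complex interpolation of Bergman spaces (Proposition \ref{prop:complexinterp}) gives $\Box^{(m)}:H^2\to A^p_{mp+(\frac p2-1)\frac nr}$ for $2<p<4$; and assertion 2 of Theorem \ref{lem:Boxreverseineq} lets one invert $\Box^{(m)}$ --- hence conclude $H^2\hookrightarrow A^p_{\frac nr(\frac p2-1)}$ --- exactly when $P_{\frac nr(\frac p2-1)}$ is bounded on $L^p_{\frac nr(\frac p2-1)}$, which for $r=2$ is checked case by case and fails for $n\geq 7$ because $\frac{4(n-1)}n\geq\frac{2n-4}{n-4}$. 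If you want to salvage your write-up, replace the tent argument by this $\Box$-interpolation--projection scheme; as it stands, the proof has an essential gap at its central step.
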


\begin{remark}
For every positive integer $m\geq 2,$ it is easy to see that the continuous embedding $H^2(T_{\tub})\hookrightarrow A^{p}_{(\frac{p}{2}-1)\frac{n}{r}}(\tub)$ implies the continuous embedding $H^{2m} (T_{\tub})\hookrightarrow A^{mp}_{(\frac{p}{2}-1)\frac{n}{r}}(\tub)$.
\end{remark}

Following this remark, we deduce the following corollary from Theorem 1.1, Theorem 1.3 and Theorem 1.4.

\begin{cor}
 Let $\mu$ be a positive Borel measure on $\tub$ and $p=2m \quad (m=1,2,\cdots)$ be a positive even number. If $q$ is a positive number satisfying one of the two following conditions
\begin{enumerate}
\item[(i)]
$r\geq 2, \hskip 2truemm n \geq 3$ and $2p \leq q < \infty;$
\item[(ii)]
$r= 2, \hskip 2truemm n = 3, \hskip 1truemm 4, \hskip 1truemm 5, \hskip 1truemm 6$ and $2p(1-\frac 1n) \leq q < 2p.$
\end{enumerate} 
and if $\nu$ is a real number satisfying the
condition $(\nu + \frac nr) \frac pq > (\frac {2n}r - 1),$
then the condition
(\ref{nec})
implies that $H^p (T_\Omega)$ continuously embeds in $L^q (T_\Omega, d\mu).$

\end{cor}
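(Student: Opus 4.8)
The plan is to deduce this from Theorem~\ref{Blasco}. It is enough to establish assertion~1 of that theorem, namely the continuous embedding $H^p(T_\Omega)\hookrightarrow A^q_{\frac nr(\frac qp-1)}(T_\Omega)$, for then the implication $1\Rightarrow 2$ there gives at once that (\ref{nec}) forces $H^p(T_\Omega)\hookrightarrow L^q(T_\Omega,d\mu)$, which is exactly the desired conclusion. So the first thing I would do is verify that the numerical hypotheses (i) and (ii) of Theorem~\ref{Blasco} hold for our triple $p,q,\nu$. Hypothesis (ii), $(\nu+\frac nr)\frac pq>\frac{2n}r-1$, is assumed outright. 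For hypothesis (i): in case~(i) of the Corollary, $q\ge 2p$ gives $q/p\ge 2>2-\frac rn$; in case~(ii), where $r=2$ and $n\in\{3,4,5,6\}$, the bound $q\ge 2p(1-\frac1n)$ gives $q/p\ge 2-\frac2n=2-\frac rn$, and since $n\ge 3$ this also forces $q>p$. Hence $0<p<q$ and $q/p>2-\frac rn$ on the admissible range (with the boundary caveat noted at the end).

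The main step is then to produce the embedding $H^p\hookrightarrow A^q_{\frac nr(\frac qp-1)}$. Write $p=2m$ and set $p':=\frac{2q}{p}=\frac qm$, so that $mp'=q$ and $\frac{p'}2-1=\frac qp-1$. In case~(i) we have $p'\ge 4$, hence Theorem~\ref{main1} gives $H^2(T_\Omega)\hookrightarrow A^{p'}_{\frac nr(\frac{p'}2-1)}(T_\Omega)$; when $m\ge 2$ the Remark following Theorem~\ref{main2} upgrades this to $H^{2m}(T_\Omega)\hookrightarrow A^{mp'}_{\frac nr(\frac{p'}2-1)}(T_\Omega)$, and for $m=1$ it already has that form. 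By the two identities above, $A^{mp'}_{\frac nr(\frac{p'}2-1)}=A^q_{\frac nr(\frac qp-1)}$, which is the required embedding. Case~(ii) is identical with Theorem~\ref{main2} replacing Theorem~\ref{main1}: using the elementary values $2p(1-\frac1n)=\frac{4p}3,\frac{3p}2,\frac{8p}5,\frac{5p}3$ for $n=3,4,5,6$, the hypothesis $2p(1-\frac1n)\le q<2p$ translates into $p'\in(\frac83,4),(3,4),(\frac{16}5,4),(\frac{10}3,4)$ respectively, which is precisely the interval in the matching item of Theorem~\ref{main2}; a one-line check shows that the weight exponent displayed in that item equals $\frac nr(\frac{p'}2-1)=\frac{np'}{2r}-\frac nr$, so again $H^2\hookrightarrow A^{p'}_{\frac nr(\frac{p'}2-1)}$, and the Remark gives $H^{2m}\hookrightarrow A^{mp'}_{\frac nr(\frac{p'}2-1)}=A^q_{\frac nr(\frac qp-1)}$.

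With assertion~1 of Theorem~\ref{Blasco} in hand and its hypotheses~(i), (ii) verified, the equivalence in Theorem~\ref{Blasco} yields assertion~2, i.e. (\ref{nec}) implies $H^p(T_\Omega)\hookrightarrow L^q(T_\Omega,d\mu)$, which proves the Corollary. Analytically there is nothing hard here: the whole content is the combination of Theorems~\ref{Blasco}, \ref{main1}, \ref{main2} and the Remark, and the part that demands care is the bookkeeping of exponents---checking $mp'=q$, $\frac{p'}2-1=\frac qp-1$, identifying the weight index of each item of Theorem~\ref{main2} with $\frac nr(\frac{p'}2-1)$, and making sure the translated interval for $p'$ is exactly the one covered by the Hardy--Littlewood-type embedding. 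A last point to keep in mind is that at the lower endpoint $q=2p(1-\frac1n)$ of case~(ii) one has $q/p=2-\frac rn$, where both Theorem~\ref{main2} and hypothesis~(i) of Theorem~\ref{Blasco} use strict inequalities; so the argument above is valid on the open range $2p(1-\frac1n)<q<2p$, and the endpoint itself would require a separate limiting argument.
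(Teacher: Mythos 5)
Your proposal is correct and is exactly the argument the paper intends: the Corollary is deduced by combining Theorem \ref{Blasco} with the embeddings of Theorems \ref{main1} and \ref{main2}, upgraded from $H^2$ to $H^{2m}$ via the Remark, and your exponent bookkeeping ($p'=2q/p$, $mp'=q$, weight $=\frac nr(\frac qp-1)$ matching each item of Theorem \ref{main2}) checks out. Your caveat about the endpoint $q=2p(1-\frac 1n)$ in case (ii), where both Theorem \ref{main2} and hypothesis (i) of Theorem \ref{Blasco} require strict inequalities, points to a genuine (minor) imprecision in the Corollary as stated that the paper passes over silently.
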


Recall that given two Banach spaces of analytic functions $X$ and $Y$ with respective norms $\|\cdot\|_X$ and $\|\cdot\|_Y$, we say an analytic function $G$ is a multiplier from $X$ to $Y$, if there exists a constant $C>0$ such that for any $F\in X$, $$\|FG\|_{Y}\leq C\|F\|_{X}.$$ We denote by $\mathcal{M}(X,Y)$ the set of multipliers from $X$ to $Y$.

Let $\alpha\in \mathbb{R}$. We denote by $H_\alpha^\infty(\tub)$, the Banach space of analytic functions $F$ on $\tub$ such that $$\|F\|_{\alpha,\infty}:=\sup_{z\in \tub}\Delta(\Im z)^\alpha |F(z)|<\infty.$$
In particular, for $\alpha = 0,$ the space $H_0^\infty(\tub)$ is the space $H^\infty$ of bounded holomorphic functions on $\tub.$
The above results allow us to obtain the following characterization of pointwise multipliers 
from $H^2(\tub)$ to $A_\nu^p(\tub)$.
\begin{thm}\label{main3} Let $4\leq p<\infty$, $\nu>\frac{n}{r}-1$. Define $\gamma=\frac{1}{p}(\nu+\frac{n}{r})-\frac{n}{2r}$. Then for any integer $m\geq 1$, the following assertions hold.
\begin{itemize}
\item[(a)] If $\gamma>0$, then $\mathcal{M}(H^{2m}(\tub),A_\nu^{pm}(\tub))=H_{\frac{\gamma}{m}}^\infty(\tub)$.
\item[(b)] If $\gamma=0$, then $\mathcal{M}(H^{2m}(\tub),A_\nu^{mp}(\tub))=H^\infty(\tub)$.
\item[(c)] If $\gamma<0$, then $\mathcal{M}(H^2(\tub),A_\nu^p(\tub))=\{0\}$
\end{itemize}
\end{thm}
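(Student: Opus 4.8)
The plan is to prove Theorem~\ref{main3} by reducing the multiplier problem to the embedding theorems already established. First observe that $G \in \mathcal{M}(H^{2m}(\tub), A_\nu^{pm}(\tub))$ means precisely that the map $F \mapsto FG$ is bounded from $H^{2m}$ to $A_\nu^{pm}$, which (after raising to the power $pm$) is equivalent to the measure-theoretic statement that $|G(z)|^{pm}\Delta(\Im z)^{\nu - \frac nr}\,dx\,dy$ is a Carleson-type measure for the embedding $H^{2m} \hookrightarrow L^{pm}(d\mu)$. By the Remark preceding Corollary~1.5 together with Theorem~\ref{main1}, we have the chain $H^2 \hookrightarrow A^p_{\frac nr(\frac p2-1)}$ and hence $H^{2m}\hookrightarrow A^{pm}_{\frac nr(\frac p2-1)}$; combining this with Theorem~\ref{Blasco} (assertion~$2$), the embedding $H^{2m}\hookrightarrow L^{pm}(d\mu)$ for $d\mu = |G|^{pm}\Delta^{\nu-\frac nr}\,dV$ holds as soon as $\mu$ satisfies the necessary condition \eqref{nec} with the parameters $p \rightsquigarrow 2m$, $q \rightsquigarrow pm$, $\nu \rightsquigarrow \nu$. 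So the proof splits into checking that the condition \eqref{nec} for this particular $\mu$ is equivalent to the stated growth condition on $G$.

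Next I would carry out the two inclusions of (a) separately. For $H_{\gamma/m}^\infty \subseteq \mathcal{M}$: if $\sup_z \Delta(\Im z)^{\gamma/m}|G(z)| < \infty$, then $d\mu = |G|^{pm}\Delta^{\nu-\frac nr}\,dV \lesssim \Delta^{\nu - \frac nr - \frac{pm\gamma}{m}}\,dV = \Delta^{\nu - \frac nr - p\gamma}\,dV$. A direct computation using the Beta-type integral over $\tub$ (the standard estimate $\int_{T_\Omega}|\Delta^{-s}(\frac{z-\bar w}{2i})|\Delta^{t-\frac nr}(\Im z)\,dV(z) \asymp \Delta^{t-s}(\Im w)$ valid in the appropriate parameter range, see \cite{FaKo}) shows that such $\mu$ satisfies \eqref{nec}; indeed $\nu - \frac nr - p\gamma = \nu - \frac nr - (\nu + \frac nr) + \frac{np}{2r} = -\frac{2n}{r} + \frac{np}{2r}$, which is exactly the exponent $t - \frac nr$ with $t - s = -(\nu + \frac nr) + \frac{np}{2r}$, matching the right-hand side of \eqref{nec} with $q = pm$, $p = 2m$. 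Hence $G$ is a multiplier. The factor $\frac 1m$ in the exponent $\gamma/m$ arises precisely because we work with $H^{2m}$ and $A_\nu^{pm}$ rather than $H^2$ and $A_\nu^p$: the relevant scaling-invariant quantity is $\Delta^{\gamma}|G^m| = (\Delta^{\gamma/m}|G|)^m$.

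For the reverse inclusion $\mathcal{M} \subseteq H_{\gamma/m}^\infty$: given a multiplier $G$, I would test against the standard family of ``bump'' functions. For $w \in \tub$, the function $F_w(z) = [\Delta^{-(\nu_0 + \frac nr)}(\frac{z-\bar w}{2i})]^{1/(2m)}$ with $\nu_0$ chosen appropriately lies in $H^{2m}(\tub)$ with norm comparable to a power of $\Delta(\Im w)$, while $|F_w G|^{pm}\Delta^{\nu-\frac nr}$ integrated over a fixed Bergman ball around $w$ is bounded below by a constant times $|G(w)|^{pm}\Delta(\Im w)^{\nu + \frac nr}$ (using that $|G|^{pm}$ is plurisubharmonic and the sub-mean-value inequality, plus the fact that $\Delta(\Im z) \asymp \Delta(\Im w)$ on the ball). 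Comparing the two sides and solving for $|G(w)|$ yields $\Delta(\Im w)^{\gamma/m}|G(w)| \lesssim \|G\|_{\mathcal{M}}$, which is the desired bound. Part (b), $\gamma = 0$, is the limiting case: the multiplier condition forces $G$ bounded by the same testing argument, and conversely $G \in H^\infty$ gives $d\mu = |G|^{pm}\Delta^{\nu - \frac nr}\,dV \lesssim \Delta^{\nu - \frac nr}\,dV$, which is exactly the Carleson measure for $H^{2m}\hookrightarrow A^{pm}_{\frac nr(\frac{pm}{2m}-1)} = A^{pm}_{\frac nr(\frac p2 - 1)} = A^{pm}_{\nu}$ when $\gamma = 0$ (since $\gamma = 0 \iff \nu + \frac nr = \frac{np}{2r} \iff \nu = \frac nr(\frac p2 - 1)$). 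Part (c): if $\gamma < 0$ and $G \not\equiv 0$, the testing argument would force $\Delta(\Im w)^{\gamma}|G(w)^m|$ bounded, i.e. $|G(w)| \lesssim \Delta(\Im w)^{-\gamma/m} \to \infty$ is allowed, but in fact one shows the embedding $H^2 \hookrightarrow A_\nu^p$ itself fails for $\gamma < 0$ (the target Bergman space is ``too small''), so no nonzero multiplier can exist; concretely, testing the multiplier inequality on the Hardy-space functions above and letting $\Im w \to \infty$ along the direction $\mathbf e$ forces $G \equiv 0$ by a normal-families/Liouville-type argument on $\tub$.

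The main obstacle will be the reverse inclusion in parts (a) and (c), specifically making the testing argument quantitatively sharp: one needs that the test functions $F_w$ simultaneously (i) have controlled $H^{2m}$ norm independent of the chosen Bergman ball, and (ii) are bounded below on that ball by a clean power of $\Delta(\Im w)$, and that the sub-mean-value inequality for $|FG|^{pm}$ (which requires $FG$ holomorphic — true here — and $pm \geq 1$ — true) transfers pointwise information to integral information without losing powers of $\Delta(\Im w)$. The bookkeeping of exponents through the substitutions $p \to 2m$, $q \to pm$ in condition \eqref{nec}, and verifying that hypothesis (ii) of Theorem~\ref{Blasco} — namely $(\nu + \frac nr)\frac{2m}{pm} = \frac{2(\nu + \frac nr)}{p} > \frac{2n}{r} - 1$ — follows from $\gamma \geq 0$ (equivalently $\nu + \frac nr \geq \frac{np}{2r}$, which gives $\frac{2(\nu+\frac nr)}{p} \geq \frac nr > \frac{2n}r - 1$ only when $\frac nr < 1$... so more care is needed here, likely using $\gamma > 0$ strictly or the precise relation between $\nu$ and $p$), is the kind of routine-but-delicate computation that must be done carefully. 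I would organize the proof so that all three parts follow from a single ``master'' estimate relating $\|FG\|_{A_\nu^{pm}}$ to a Carleson functional, then read off (a), (b), (c) according to the sign of $\gamma$.
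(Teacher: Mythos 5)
Your overall strategy for parts (a) and (b) --- pointwise bound on $G$ plus the Hardy--Littlewood embedding for sufficiency, testing against the kernel functions $F_w(z)=c\,\Delta(\Im w)^{\frac{n}{2mr}}\Delta^{-\frac{n}{mr}}(\frac{z-\bar w}{i})$ for necessity --- is in substance the paper's proof. But the way you package the sufficiency direction contains a gap that you yourself flag and do not close. You route the argument through Theorem~\ref{Blasco}: ``the embedding $H^{2m}\hookrightarrow L^{pm}(d\mu)$ holds as soon as $\mu$ satisfies \eqref{nec}.'' That implication is only available under hypothesis (ii) of Theorem~\ref{Blasco}, which here reads $\frac{2}{p}(\nu+\tfrac nr)>\tfrac{2n}{r}-1$, i.e.\ $\nu>\tfrac p2(\tfrac{2n}{r}-1)-\tfrac nr\geq \tfrac{3n}{r}-2$ since $p\geq 4$; Theorem~\ref{main3} only assumes $\nu>\tfrac nr-1$, so a whole range of admissible $\nu$ is not covered, and as you correctly compute, $\gamma\geq 0$ does \emph{not} rescue (ii) because $\tfrac nr\leq\tfrac{2n}{r}-1$ always. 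The detour is unnecessary: your own computation $|G|^{pm}\Delta^{\nu-\frac nr}\lesssim \Delta^{\nu-\frac nr-p\gamma}=\Delta^{\frac nr(\frac p2-1)-\frac nr}$ shows that $\|FG\|_{A_\nu^{pm}}^{pm}\lesssim\|G\|_{\gamma/m,\infty}^{pm}\|F\|_{A^{pm}_{\frac nr(\frac p2-1)}}^{pm}$, and Theorem~\ref{main1} together with the Remark gives $H^{2m}\hookrightarrow A^{pm}_{\frac nr(\frac p2-1)}$ directly. That is exactly what the paper does; no Carleson condition and no Theorem~\ref{Blasco} are needed.

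Part (c) as written does not work. The testing argument gives $|G(w)|\leq C\,\Delta(\Im w)^{-\gamma/m}$ with $-\gamma/m>0$, so the bound tends to $0$ as $\Delta(\Im w)\to 0$, i.e.\ as $w$ approaches the Shilov boundary; that is the mechanism forcing $G\equiv 0$ (vanishing boundary values of a holomorphic function with this decay). Sending $\Im w\to\infty$ along $\mathbf e$, as you propose, makes $\Delta(\Im w)^{-\gamma/m}\to\infty$ and yields no information, and there is no Liouville-type obstruction in that direction. Likewise, ``the embedding $H^2\hookrightarrow A_\nu^p$ fails, so no nonzero multiplier exists'' is a non sequitur: a rapidly decaying $G$ could in principle compensate for the failure of the unweighted embedding, which is precisely why one must run the pointwise testing estimate rather than appeal to the embedding itself.
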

For $p<4$, we have under further restrictions the following.

\begin{thm}\label{main4} Let $2(2-\frac{r}{n})<p<4$, $\nu>\frac{n}{r}-1$. Assume that  $P_{(\frac{p}{2}-1)\frac{n}{r}}$ is bounded on $L_{(\frac{p}{2}-1)\frac{n}{r}}^p(\tub)$. Define $\gamma=\frac{1}{p}(\nu+\frac{n}{r})-\frac{n}{2r}$. Then for any integer $m\geq 1$, the following assertions hold.
\begin{itemize}
\item[(a)] If $\gamma>0$, then $\mathcal{M}(H^{2m}(\tub),A_\nu^{mp}(\tub))=H_{\frac{\gamma}{m}}^\infty(\tub)$.
\item[(b)] If $\gamma=0$, then $\mathcal{M}(H^{2m}(\tub),A_\nu^{mp}(\tub))=H^\infty(\tub)$.
\item[(c)] If $\gamma<0$, then $\mathcal{M}(H^{2m}(\tub),A_\nu^{mp}(\tub))=\{0\}$
\end{itemize}
\end{thm}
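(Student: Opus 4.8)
The plan is to run exactly the argument used for Theorem~\ref{main3}, with the sub-threshold Hardy--Littlewood embedding replacing Theorem~\ref{main1} as the input. Set $\beta:=(\frac p2-1)\frac nr$. The two standing hypotheses — $2(2-\frac rn)<p$ and boundedness of $P_\beta$ on $L^p_\beta(\tub)$ — will be used precisely to guarantee, first, that $\beta>\frac nr-1$ (so $A^{mp}_\beta(\tub)$ is nontrivial), and second, that $H^2(\tub)\hookrightarrow A^p_\beta(\tub)$; by the Remark following Theorem~\ref{main2} the latter upgrades to $H^{2m}(\tub)\hookrightarrow A^{mp}_\beta(\tub)$ for every integer $m\geq1$. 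I will also use the elementary identity $\nu-p\gamma=\beta$, which is just the definition of $\gamma$.

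For the sufficiency in parts (a) and (b), given $G\in H^\infty_{\gamma/m}(\tub)$ and $F\in H^{2m}(\tub)$ I would insert the pointwise bound $|G(z)|^{mp}\leq\|G\|_{\gamma/m,\infty}^{mp}\,\Delta(\Im z)^{-p\gamma}$ into the $A^{mp}_\nu$-integral of $FG$; the weight $\Delta(\Im z)^{\nu-\frac nr}$ then becomes $\Delta(\Im z)^{\beta-\frac nr}$, so $\|FG\|_{A^{mp}_\nu}\leq\|G\|_{\gamma/m,\infty}\|F\|_{A^{mp}_\beta}\lesssim\|G\|_{\gamma/m,\infty}\|F\|_{H^{2m}}$, i.e.\ $G\in\mathcal{M}(H^{2m}(\tub),A^{mp}_\nu(\tub))$ (when $\gamma=0$ one has $\nu=\beta$, so $H^\infty_{\gamma/m}=H^\infty$). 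The reverse inclusion in (a), (b), and all of (c), will come from one computation: test the multiplier inequality against the power functions $F_w(z)=\Delta^{-\tau}\!\big(\tfrac{z-\bar w}{2i}\big)$ with $\tau$ fixed so that $2m\tau>\tfrac{2n}{r}-1$, whence $F_w\in H^{2m}$ with $\|F_w\|_{H^{2m}}^{2m}\approx\Delta(\Im w)^{\frac nr-2m\tau}$ uniformly in $w$. Using that $|F_w|$, $\Delta(\Im z)$ and $\big|\Delta\!\big(\tfrac{z-\bar w}{2i}\big)\big|$ are all $\approx\Delta(\Im w)$ on a fixed Bergman ball $B(w,1)$, that such balls have Lebesgue volume $\approx\Delta(\Im w)^{2n/r}$, and the invariant sub-mean value property of $|G|^{mp}$, one bounds $\|F_wG\|_{A^{mp}_\nu}^{mp}$ from below by a constant times $\Delta(\Im w)^{-mp\tau+\nu+\frac nr}|G(w)|^{mp}$ and from above by $C^{mp}\Delta(\Im w)^{(\frac nr-2m\tau)\frac p2}$; cancelling the common factor $\Delta(\Im w)^{-mp\tau}$ yields exactly $\Delta(\Im w)^{\gamma/m}|G(w)|\lesssim C$ for all $w$, i.e.\ every multiplier lies in $H^\infty_{\gamma/m}(\tub)$ with comparable norm.

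This closes (a) and (b). In case (c), where $\gamma<0$, the same estimate shows any multiplier $G$ satisfies $|G(z)|\lesssim\Delta(\Im z)^{|\gamma|/m}$, so it remains to see that $H^\infty_\alpha(\tub)=\{0\}$ for $\alpha<0$. For this I would fix $\mathbf e\in\Omega$ and observe that $\tfrac{z+i\mathbf e}{2i}\in\tub$ with $\big|\Delta\!\big(\tfrac{z+i\mathbf e}{2i}\big)\big|\gtrsim\Delta(\Im z)$, so that $g(z):=G(z)\,\Delta\!\big(\tfrac{z+i\mathbf e}{2i}\big)^{\alpha}$ is holomorphic and bounded on $\tub$; meanwhile, along the vertical segments $z=x+i\eps\mathbf e$ one has $|G(x+i\eps\mathbf e)|\lesssim\eps^{r|\alpha|}\to0$ as $\eps\to0^+$ while the second factor stays bounded, so $g$ has vanishing vertical boundary limits. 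By the uniqueness theorem for $H^\infty(\tub)$ this forces $g\equiv0$, hence $G\equiv0$, giving $\mathcal{M}(H^{2m},A^{mp}_\nu)=\{0\}$.

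The genuinely new analytic content is entirely in the input embedding $H^2(\tub)\hookrightarrow A^p_\beta(\tub)$ for $2(2-\frac rn)<p<4$, which is where the Bergman-projector hypothesis is used and which is established in the body of the paper rather than here; once it is available the multiplier argument is soft, and the only slightly delicate point within it is the triviality of $H^\infty_\alpha(\tub)$ for $\alpha<0$ needed for (c), together with keeping the test-function estimates uniform in $w$.
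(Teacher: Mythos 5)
Your proposal is correct and follows essentially the same route as the paper, whose proof of Theorem~\ref{main4} is literally ``run the proof of Theorem~\ref{main3} with Theorem~\ref{thm:H2mApembed} as the embedding input''; your test-function computation via the sub-mean-value property is just an unpacked version of the paper's appeal to Lemma~\ref{lem:pointwiseestimberg}. The only substantive difference is in case (c), where you justify the implication $|G(z)|\lesssim\Delta(\Im z)^{|\gamma|/m}\Rightarrow G\equiv 0$ by a boundary-uniqueness argument for $H^\infty(\tub)$, a step the paper passes over rather quickly.
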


Finally we particularize the previous problems to the tube domain over the light cone $\Lambda_n.$ We take advantage of the geometry of this cone to prove the following restricted Hardy-Littlewood Theorem. The point here is that the exponent $p$ is no more restricted to the set of even positive integers and the exponents $p$ and $q$ are just related by the inequality $1\leq p < q < \infty$. We say that a subset $B$ of the Lorentz cone $\Lambda_n$ is a restricted region with vertex at the origin $O$ if the Euclidean distance of any point of $B$ from $O$ is less that a multiple of the Euclidean distance of that point from the boundary of $\Lambda_n.$ We denote $T_B$ the tube domain over $B.$

\begin{thm}\label{HLR}
Let $1\leq p<q<\infty.$ Then given each restricted region $B$  of the Lorentz cone $\Lambda_n$ with vertex $O,$ there exists a positive constant $C_{p, q} {\bf (B)}$ such that 
$$\int_{T_{B}} |F(z)|^q \Delta^{\frac n2 (\frac qp - 2)} (y)dxdy \leq C_{p, q} {\bf (B)} ||F||_{H^p}^q$$
for all $F\in H^p (T_{\Lambda_n}).$
\end{thm}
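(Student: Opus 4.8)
The plan is to recast the inequality as a Carleson‑type embedding of $H^p(T_{\Lambda_n})$ into a weighted Lebesgue space, and then to run a non‑tangential maximal function / tent argument, using the dilation invariance of $\Lambda_n$ to keep the geometry under control over a restricted region.

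\emph{Step 1: reduction to a model region.} Writing $y=(y_1,y')\in\mathbb R\times\mathbb R^{n-1}$, one has $d(y,\partial\Lambda_n)\approx\Delta(y)/|y|\approx y_1-|y'|$, so the defining property $|y|\le c\,d(y,\partial\Lambda_n)$ of a restricted region $B$ forces $|y'|\le\theta y_1$ for some $\theta=\theta(c)\in(0,1)$. Hence $B\subset\Gamma_\theta:=\{y\in\Lambda_n:|y'|<\theta y_1\}$, and since the weight is unchanged it suffices to prove the estimate for $B=\Gamma_\theta$. On $\Gamma_\theta$ one has $(1-\theta^2)y_1^2\le\Delta(y)\le y_1^2$ and $y_1\le|y|\le\sqrt{1+\theta^2}\,y_1$, so $\Delta(y)\approx|y|^2$ with constants depending only on $\theta$. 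Setting $d\mu:=\Delta^{\frac n2(\frac qp-2)}(y)\,\chi_{\Gamma_\theta}(y)\,dxdy$, the conclusion is equivalent to the continuous embedding $H^p(T_{\Lambda_n})\hookrightarrow L^q(T_{\Lambda_n},d\mu)$, i.e. to $H^p(T_{\Lambda_n})$ embedding, by restriction, into $A^q_{\frac n2(\frac qp-1)}(T_{\Gamma_\theta})$.

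\emph{Step 2: maximal function and Carleson boxes.} For $t\in\mathbb R^n$ put $\Gamma_\theta(t):=\{x+iy\in T_{\Lambda_n}:y\in\Gamma_\theta,\ |x-t|<|y|\}$; its ``shadow'' at $z=x+iy$ is the Euclidean ball $I_z:=\{t:|x-t|<|y|\}$, of measure $\approx|y|^n$. Because $d(y,\partial\Lambda_n)\approx|y|$ on $\Gamma_\theta$, each $\Gamma_\theta(t)$ lies inside a fixed dilate of the admissible approach region at $t$, so by the theory of Hardy spaces on tube domains over symmetric cones the maximal function $F^*(t):=\sup\{|F(z)|:z\in\Gamma_\theta(t)\}$ obeys $\|F^*\|_{L^p(\mathbb R^n)}\le C_\theta\|F\|_{H^p}$. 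For a large fixed constant $C$ and a cube $I\subset\mathbb R^n$ let $Q(I):=\{x+iy:x\in CI,\ y\in\Gamma_\theta,\ |y|<C\,\mathrm{diam}\,I\}$, where $CI$ is the concentric dilate of $I$. Using $\Delta(y)\approx|y|^2$ and the hypothesis $q>p$ (which makes the exponent $n(\frac qp-1)$ positive, so the $y$‑integral converges), a direct computation gives
\[
\mu\big(Q(I)\big)\approx(\mathrm{diam}\,I)^{nq/p}\approx|I|^{q/p},
\]
the $(p,q)$‑Carleson condition for $\mu$.

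\emph{Step 3: the embedding argument.} For $\lambda>0$ let $U_\lambda:=\{t\in\mathbb R^n:F^*(t)>\lambda\}$, which is open with $|U_\lambda|<\infty$ since $F^*\in L^p$. If $z=x+iy\in T_{\Gamma_\theta}$ and $|F(z)|>\lambda$, then $z\in\Gamma_\theta(t)$ for every $t\in I_z$, whence $F^*(t)\ge|F(z)|>\lambda$ and $I_z\subset U_\lambda$; decomposing $U_\lambda$ into Whitney cubes $\{I_j\}$ (with $\sum_j|I_j|=|U_\lambda|$ and $\mathrm{diam}\,I_j\approx\mathrm{dist}(I_j,U_\lambda^c)$) one checks that for $C$ chosen large enough such a $z$ lies in some $Q(I_j)$. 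Hence, using $q/p\ge1$,
\[
\mu(\{|F|>\lambda\})\le\sum_j\mu\big(Q(I_j)\big)\lesssim\sum_j|I_j|^{q/p}\le\Big(\sum_j|I_j|\Big)^{q/p}=|\{F^*>\lambda\}|^{q/p}.
\]
By the layer‑cake formula, Chebyshev's inequality applied to one factor (valid since $q\ge p$), and the maximal estimate,
\[
\int_{T_{\Lambda_n}}|F|^q\,d\mu=q\int_0^\infty\lambda^{q-1}\mu(\{|F|>\lambda\})\,d\lambda\lesssim\int_0^\infty\lambda^{q-1}|\{F^*>\lambda\}|^{q/p}\,d\lambda\lesssim\|F^*\|_{L^p}^q\lesssim\|F\|_{H^p}^q,
\]
where the second inequality comes from $\int_0^\infty\lambda^{q-1}|\{F^*>\lambda\}|^{q/p}\,d\lambda\le\|F^*\|_{L^p}^{q-p}\int_0^\infty\lambda^{p-1}|\{F^*>\lambda\}|\,d\lambda=\tfrac1p\|F^*\|_{L^p}^q$. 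This proves Theorem \ref{HLR}, with $C_{p,q}(B)$ absorbing the dependence on $\theta$.

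\emph{Main obstacle.} The genuinely non‑routine points are, first, the verification that over a restricted region the cone‑type regions $\Gamma_\theta(t)$ are dominated by the admissible approach regions, so that $\|F^*\|_{L^p}\lesssim\|F\|_{H^p}$ --- this is exactly where the geometry of $\Lambda_n$ (its dilation invariance, and $d(y,\partial\Lambda_n)\approx|y|$ on $\Gamma_\theta$) is used --- and, second, the bookkeeping in the tent decomposition $\{|F|>\lambda\}\cap T_{\Gamma_\theta}\subset\bigcup_jQ(I_j)$. The remaining computations (the Carleson‑box estimate and the layer‑cake manipulation) reduce to a plain exponent count, which closes precisely because $\mathrm{rank}\,\Lambda_n=2$ forces $\Delta^{\frac n2(\frac qp-2)}(y)\approx|y|^{n(\frac qp-2)}$ on $\Gamma_\theta$.
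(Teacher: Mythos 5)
Your argument is correct on the range $1<p<q<\infty$, but it follows a genuinely different route from the paper's. The paper does not use maximal functions at all: it writes $F=P(f)$ with $\|f\|_p=\|F\|_{H^p}$, applies Young's convolution inequality to get the pointwise bound $\Delta^{-\frac n{2q}}(y)\|P(f)(\cdot+iy)\|_{L^q(dx)}\le C\|f\|_s\,\Delta^{-\frac n{2s}}(y)$ for every $1\le s\le q$, converts this into weak-type $(1,1)$ and $(q,q)$ estimates for the operator $Sf(y)=\Delta^{-\frac n{2q}}(y)\|P(f)(\cdot+iy)\|_q$ by means of the sublevel-set estimate $\mu_\beta(\{\Delta^{\frac n2}<\gamma\})\lesssim\gamma$ for $d\mu_\beta=\Delta^\beta(y)y_1^{-2\beta}dy$ (Lemma \ref{geo}), and concludes by Marcinkiewicz interpolation. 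This yields a stronger, global inequality on all of $T_{\Lambda_n}$ with the weight $\Delta^{\frac n2(\frac qp-2)+\beta}(y)y_1^{-2\beta}$ (Theorem 6.3), from which the restricted statement follows because $y_1\approx\Delta^{\frac12}(y)$ on a restricted region. Your tent/Whitney argument instead proves the restricted statement directly; it is more flexible in that it only uses the Carleson-box condition $\mu(Q(I))\lesssim|I|^{q/p}$ and so would apply to any measure on $T_{\Gamma_\theta}$ satisfying it, but it pays for this by importing the restricted non-tangential maximal theorem, which the paper's proof avoids entirely.

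Two caveats. First, the bound $\|F^*\|_{L^p}\le C_\theta\|F\|_{H^p}$ is the load-bearing step of your proof and deserves more than an appeal to ``the theory'': for $p>1$ it does follow from $F=P(f)$, $\|f\|_p=\|F\|_{H^p}$, and the pointwise domination of the restricted non-tangentially maximal Poisson integral by the Hardy--Littlewood maximal function of $f$ (this is classical for tubes over cones, cf. Stein--Weiss, Ch.~III), so you should state and cite exactly that. Second, at the endpoint $p=1$ this route only gives a weak-type bound for $F^*$, while your layer-cake computation genuinely requires the strong bound $\|F^*\|_{L^1}\lesssim\|F\|_{H^1}$ (an $H^1$ maximal theorem for $T_{\Lambda_n}$), which the Poisson-integral argument does not supply; note, however, that the paper's own proof shares this limitation, since its Theorem 6.3 is stated and proved only for $1<p$ while Theorem \ref{HLR} is claimed for $1\le p$.
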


The plan of this paper is as follows. In section 2, we present some preliminary results. The Blasco Theorem 1.1 is proved in section 3. The Hardy-Littlewood Theorems 1.3 and 1.4 are established in section 4. Section 5 is devoted to the proof of Theorem 1.6. The proof of the restricted Hardy-Littlewood Theorem \ref{HLR} is given in section 6 while in section 7, we pose some open questions related this work.

\begin{remark}
In \cite{shamoyan}, R. Shamoyan and M. Arsenovic investigated the continuous embedding of some generalized Hardy spaces $H^2_\mu (\tub)$ defined in \cite{G} into weighted mixed norm Bergman spaces. The space $H^p_\mu (\tub), \hskip 2truemm 0<p<\infty,$ consists of the functions $f$ holomorphic on $\tub,$ satisfying
$$||f||_{H^p_\mu} := (\sup \limits_{t\in \Omega} \int_{\partial \Omega} \int_{\mathbb R^n} |f(x+i(y+t)|^p dxd\mu (y))^{\frac 1p} <\infty.$$
In their study, the measure $\mu$ has the form 
$$d\mu_{\textbf s} (y) = \chi_\Omega (y)\frac {\Delta_{\textbf s} (y)}{\Gamma_\Omega (\textbf s)}\frac {dy}{\Delta^{\frac nr} (y)},$$
where $\textbf s = (s_1,\cdots,s_r)\in \mathbb R^r$ belongs to the so-called Wallach set and is such that $s_j > 0, \hskip 2truemm j=1,\cdots,r.$ The generalized power function $\Delta_{\textbf s}$ is defined at the beginning of section 2. Contrary to Bergman spaces, the measures $\mu=\mu_{\textbf s}$ studied by these authors have their support on the boundary $\partial \Omega$ of $\Omega.$ The usual Hardy spaces $H^p (\tub)$ correspond to the case where $\mu=\delta_0$ (the delta measure) and are outside their scope of application. Their proofs rely heavily on a Paley-Wiener characterization of functions in $H^2_\mu (\tub)$ proved in \cite{G}. We use the same tool in the proof of Theorem 1.3 (Theorem 4.1): we include this proof for completeness. The reader will also point out that particularly in Theorem \ref{HLR}, the exponents $1\leq p<q<\infty$ we consider are more general ($p$ may be different from $2).$ 
\end{remark}

\section{Preliminaries and useful results}
Materials of this section are essentially from \cite{FaKo}. We
give some definitions and useful results.

Let $\Omega$ be an irreducible open cone of rank r inside a vector
space $V$ of dimension n, endowed with an inner product $(.|.)$
for which $\Omega$ is self-dual.
Let $G(\Omega)$ be the group of transformations of $\Omega$, and
$G$ its identity component. It is well-known that there exists a
subgroup $H$ of $G$ acting simply transitively on $\Omega$, that
is every $y\in \Omega$ can be written uniquely as $y=g\mathbf e$
for some $g\in H$ and a fixed $\mathbf e\in \Omega$.

We recall that $\Omega$ induces in $V$ a structure of Euclidean
Jordan algebra with identity $\mathbf e$ such that $$\overline
\Omega=\{x^2: x\in V\}.$$ We can identify (since $\Omega$ is
irreducible) the inner product $(.|.)$ with the one given by the
trace on $V$: $$(x|y)=tr(xy),\,\,\,x,y\in V.$$ Let
$\{c_1,\cdots,c_r\}$ be a fixed Jordan frame in $V$ and
$$V=\oplus_{1\le i\le j\le r }V_{i,j}$$ be its associated Pierce
decomposition of $V$. We denote by
$\Delta_1(x),\cdots,\Delta_r(x)$ the principal minors of $x\in V$
with respect to the fixed Jordan frame $\{c_1,\cdots,c_r\}$. More
precisely, $\Delta_k(x)$ is the determinant of the projection
$P_kx$ of $x$ in the Jordan subalgebra $V^{(k)}=\oplus_{1\le i\le
j\le k }V_{i,j}$. We have $\Delta=\Delta_r$ and $\Delta_k(x)>0$,
$k=1,\cdots,r$, when $x\in \Omega$. The generalized power function
on $\Omega$ is defined as
$$\Delta_{\bf s} (x)=\Delta_1^{s_1-s_2}(x)\Delta_2^{s_2-s_3}(x)\cdots\Delta_r^{s_r}(x),\; x\in \Omega;\; {\bf s}\in\C^r.$$
 Since the principal minors $\Delta_k, \hskip 1truemm k=1,\cdots,r$ are polynomials in $V,$ they can be extended in a natural way to the complexification $V^{\mathbb C}$ of $V$ as holomorphic polynomials we shall denote $\Delta_k \left (\frac {x+iy}i \right ).$ It is known that these extensions are zero free on $T_\Omega.$ So the generalized power functions $\Delta_{\bf s}$ can also be extended as holomorphic functions $\Delta_{\bf s} \left (\frac {x+iy}i \right )$ on $T_\Omega.$

Next, we recall the definition of the generalized gamma function on
$\Omega$:
$$\Gamma_{\Omega}({\bf s})=\int_{\Omega}e^{-(\mathbf
{\underline
e}|\xi)}\Delta_{\bf s}(\xi)\Delta^{-n/r}(\xi)d\xi \quad \quad ({\bf s}=(s_1,\cdots,s_r)\in
\C^r).$$ We set $d:= \frac {2\left (\frac nr -1 \right )}{r-1}.$ This integral converges if and only if $\Re
s_j>(j-1)\frac{d}{2}$,\\ for all
$j=1,\cdots,r.$ Being in this case it is equal to:
$$\Gamma_{\Omega}({\bf s})=(2\pi)^{\frac{n-r}{2}}\prod_{j=1}^{r}\Gamma \left (s_j-(j-1)\frac{d}{2}\right )$$
(see Chapter VII of \cite{FaKo}).  For $\mathbf s = (s,\cdots,s), \hskip 2truemm s\in \mathbb C,$ we simply write $\Gamma_\Omega (s)$ instead of $\Gamma_{\Omega}({\bf s}).$\\
We also record the following lemma.

\begin{lem}\label{1.2} Let $s\in \C$ with $\Re
s >\frac nr -1.$ Then for all $y\in \Omega$
we have
$$\int_{\Omega}e^{-(y|\xi)}\Delta^{s-\frac nr}(\xi)d\xi=\Gamma_{\Omega}(s)\Delta^{-s}(y).$$ 
\end{lem}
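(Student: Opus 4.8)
The plan is to exploit the homogeneity of the cone and reduce the integral to the very definition of $\Gamma_\Omega$ at the base point $\mathbf e.$ First I would record the two structural facts from \cite{FaKo} that drive the computation: (i) for $g$ in the identity component $G$ the Jacobian of the linear map $\xi\mapsto g\xi$ is $\det(g),$ the measure $\Delta^{-n/r}(\xi)\,d\xi$ is $G$-invariant, and the determinant function is relatively invariant with $\Delta(g\xi)=\det(g)^{r/n}\Delta(\xi);$ and (ii) since $\Omega$ is self-dual, the adjoint $g^*$ of any $g\in G$ again lies in $G$ and preserves $\Omega.$ Specializing (i) at $\xi=\mathbf e$ (where $\Delta(\mathbf e)=1$) gives $\det(g)=\Delta(g\mathbf e)^{n/r},$ which is the bridge that converts powers of $\det(g)$ into powers of $\Delta.$

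Next I would note that the case $y=\mathbf e$ is exactly the definition: for the scalar parameter $\mathbf s=(s,\dots,s)$ one has $\Delta_{\mathbf s}=\Delta^{s},$ so $\Gamma_\Omega(s)=\int_\Omega e^{-(\mathbf e|\xi)}\Delta^{s-n/r}(\xi)\,d\xi,$ and the hypothesis $\Re s>\frac nr-1$ is precisely the binding instance $j=r$ of the convergence condition $\Re s_j>(j-1)\frac d2,$ so this integral converges absolutely. The general case follows from a single change of variables: writing $y=g\mathbf e$ with $g\in G$ (equivalently $g\in H$), I use $(y|\xi)=(g\mathbf e|\xi)=(\mathbf e|g^*\xi)$ and substitute $\eta=g^*\xi.$ By self-duality $\eta$ ranges over all of $\Omega,$ and facts (i)--(ii) produce $d\xi=\det(g)^{-1}\,d\eta$ together with $\Delta(\xi)^{s-n/r}=\det(g)^{-(r/n)(s-n/r)}\Delta(\eta)^{s-n/r}.$

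Collecting the powers of $\det(g),$ the exponent telescopes as $-(r/n)(s-n/r)-1=-(r/n)s,$ so the integral becomes $\det(g)^{-(r/n)s}\Gamma_\Omega(s).$ Finally, substituting $\det(g)=\Delta(y)^{n/r}$ from fact (i) turns $\det(g)^{-(r/n)s}$ into $\Delta^{-s}(y),$ which yields the claimed identity $\int_\Omega e^{-(y|\xi)}\Delta^{s-n/r}(\xi)\,d\xi=\Gamma_\Omega(s)\Delta^{-s}(y).$

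The computation is otherwise routine; the two points needing care are justifying that the substitution $\eta=g^*\xi$ stays inside $\Omega$—this is where self-duality is essential, and it is the step I would flag as the main subtlety—and tracking the two independent sources of $\det(g),$ one from the Jacobian and one from the covariance of $\Delta,$ so that they combine to exactly $-(r/n)s.$ An equivalent route avoids the adjoint entirely by invoking the $G$-invariance of $\Delta^{-n/r}(\xi)\,d\xi$ directly, but the adjoint substitution keeps the bookkeeping most transparent.
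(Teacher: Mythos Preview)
Your argument is correct and is essentially the standard proof of this identity (cf.\ \cite{FaKo}, Proposition~VII.1.2). Note, however, that the paper does not actually prove Lemma~\ref{1.2}: it is merely ``recorded'' as a known fact, with the implicit reference to \cite{FaKo}. So there is no proof in the paper to compare against; what you have written is exactly the argument one finds in the cited source, reducing to the base point $\mathbf e$ via the simply transitive action and the relative invariance $\Delta(g\xi)=\det(g)^{r/n}\Delta(\xi)$.
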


The beta function of the symmetric cone $\Omega$ is defined by the
following integral:
$$B_\Omega(p,q)=\int_{\Omega\cap (\mathbf e-\Omega)}\Delta^{p-\frac{n}{r}}(x)\Delta^{q-\frac{n}{r}}(\mathbf e-x)dx,$$
where $p$ and $q$ are in $\C$. When $\Re p>\frac{n}{r}-1$ and $\Re q>\frac{n}{r}-1$,
the above integral converges absolutely and
$$B_\Omega(p,q)=\frac{\Gamma_{\Omega}(p)\Gamma_{\Omega}(q)}{\Gamma_{\Omega}(p+q)}$$
(see Theorem VII.1.7 in \cite{FaKo}).

\begin{lem}\label{1.1'} Let $p, q\in \C$ with $\Re
p >\frac nr -1$ and $\Re
q >\frac nr -1$. Then, for all $y\in \Omega$
we have
$$\int_{\Omega \cap (u-\Omega)} \Delta^{p-\frac{n}{r}}(x)\Delta^{q-\frac{n}{r}}(u-x)dx=B_\Omega(p,q) \Delta^{p+q-\frac nr} (u).$$
\end{lem}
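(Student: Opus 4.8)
The plan is to reduce the integral to the defining integral of the beta function $B_\Omega(p,q)$ by a single linear change of variables, exploiting the simple transitivity of the group $H$ on $\Omega$ together with the covariance of the determinant function. Fix $u\in\Omega$ and write $u=g\mathbf e$ with $g\in H$. Since $H$ lies in the identity component $G$ of $G(\Omega)$, we have $g\Omega=\Omega$, $\mathrm{Det}(g)>0$, and the classical transformation laws $\Delta(gv)=\mathrm{Det}(g)^{r/n}\Delta(v)$ for $v\in\overline\Omega$ and $d(gt)=\mathrm{Det}(g)\,dt$ for Lebesgue measure. Note also that since $\Delta>0$ on $\Omega$, the powers $\Delta^{s}(x)=e^{s\log\Delta(x)}$ are well defined for $s\in\mathbb C$ and satisfy $|\Delta^{s}(x)|=\Delta^{\Re s}(x)$; hence the integrals below converge absolutely precisely under the hypotheses $\Re p,\Re q>\tfrac nr-1$, since that is exactly where the beta integral converges.

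First I would substitute $x=gt$ in the left-hand side. As $x$ runs over $\Omega\cap(u-\Omega)$, the point $t=g^{-1}x$ runs over $\Omega\cap(\mathbf e-\Omega)$: indeed $x\in\Omega\iff t\in\Omega$ because $g$ preserves $\Omega$, and $u-x=g(\mathbf e-t)$, so $u-x\in\Omega\iff\mathbf e-t\in\Omega$ for the same reason. The transformation laws then give $\Delta^{p-\frac nr}(x)=\mathrm{Det}(g)^{\frac rn(p-\frac nr)}\Delta^{p-\frac nr}(t)$, $\Delta^{q-\frac nr}(u-x)=\mathrm{Det}(g)^{\frac rn(q-\frac nr)}\Delta^{q-\frac nr}(\mathbf e-t)$ and $dx=\mathrm{Det}(g)\,dt$, so that the left-hand side becomes $\mathrm{Det}(g)^{\alpha}B_\Omega(p,q)$, where $B_\Omega(p,q)=\int_{\Omega\cap(\mathbf e-\Omega)}\Delta^{p-\frac nr}(t)\Delta^{q-\frac nr}(\mathbf e-t)\,dt$ is the integral defining the beta function (finite under the hypotheses) and $\alpha=\frac rn(p-\frac nr)+\frac rn(q-\frac nr)+1=\frac rn(p+q)-1$ collects the contributions of the two factors of the integrand and of the Jacobian.

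It remains to identify the prefactor. By the transformation law once more and $\Delta(\mathbf e)=1$,
$$\Delta^{p+q-\frac nr}(u)=\Delta^{p+q-\frac nr}(g\mathbf e)=\mathrm{Det}(g)^{\frac rn(p+q-\frac nr)}=\mathrm{Det}(g)^{\alpha},$$
which is exactly the power of $\mathrm{Det}(g)$ found above; combining this with the previous step gives the asserted identity. The argument is essentially formal, so there is no genuine obstacle; the only point deserving care is the exponent bookkeeping — the power of $\mathrm{Det}(g)$ produced by the Jacobian and the two factors of the integrand must collapse to $\frac rn(p+q-\frac nr)$, so that the weight matches $\Delta^{p+q-\frac nr}(u)$ on the nose. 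Equivalently, one can observe a priori that both sides of the asserted equality are $H$-covariant functions of $u$ with the same weight, so by simple transitivity it suffices to evaluate at $u=\mathbf e$, which is precisely the definition of $B_\Omega(p,q)$. Alternatively, the identity follows from Lemma \ref{1.2} by a Laplace transform argument: the left-hand side is the convolution $(\Delta^{p-\frac nr}\mathbf 1_\Omega)*(\Delta^{q-\frac nr}\mathbf 1_\Omega)$ evaluated at $u$, whose Laplace transform equals $\Gamma_\Omega(p)\Gamma_\Omega(q)\Delta^{-(p+q)}(y)$ by Lemma \ref{1.2} and Fubini's theorem; recognizing $\Gamma_\Omega(p+q)\Delta^{-(p+q)}(y)$ as the Laplace transform of $\Delta^{p+q-\frac nr}\mathbf 1_\Omega$ and using injectivity of the Laplace transform yields the identity with constant $\Gamma_\Omega(p)\Gamma_\Omega(q)/\Gamma_\Omega(p+q)=B_\Omega(p,q)$.
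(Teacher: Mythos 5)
Your proof is correct; the paper itself states this lemma without proof (it is the standard homogeneity of the beta integral, quoted from Faraut--Koranyi, Theorem VII.1.7), and both of your arguments --- the change of variables $x=gt$ with $u=g\mathbf e$ using $\Delta(gv)=(\mathrm{Det}\,g)^{r/n}\Delta(v)$ and $dx=(\mathrm{Det}\,g)\,dt$, and the convolution/Laplace-transform argument via Lemma \ref{1.2} --- are exactly the standard routes, with the exponent bookkeeping $\frac rn(p-\frac nr)+\frac rn(q-\frac nr)+1=\frac rn(p+q-\frac nr)$ checking out. Nothing further is needed.
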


The following is \cite[Proposition 3.5]{BBGNPR}.
\begin{lem}\label{lem:pointwiseestimberg}
Let $1\leq p<\infty$ and $\nu>\frac{n}{r}-1$. Then there is a constant $C>0$ such that for any $f\in A_\nu^p(T_\Omega)$ the following
pointwise estimate holds:
\begin{equation}\label{eq:pointwiseestimberg}|f(z)|\le C\Delta^{-\frac{1}{p}(\nu+\frac{n}{r})}(\Im z)\|f\|_{p,\nu},\,\,\,\textrm{for all}\,\,\, z\in T_\Omega.
\end{equation}
\end{lem}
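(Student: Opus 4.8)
The plan is to exploit the transitive action of the affine automorphism group of $T_\Omega$ to reduce the estimate to a single point, and to obtain the estimate at that point from the subharmonicity of $|f|^p$. Recall from Section 2 that $H$ acts simply transitively on $\Omega$; fix $g\in H$ with $g\mathbf e=y_0$ and set $\Phi(w)=g_{\bC}w+x_0$, where $g_{\bC}$ is the complex-linear extension of $g$ to $V^{\bC}$. Then $\Phi$ is an automorphism of $T_\Omega$ carrying the base point $i\mathbf e$ to an arbitrary prescribed point $z_0=x_0+iy_0$. The whole argument reduces to the single base-point inequality $|h(i\mathbf e)|\le C\|h\|_{p,\nu}$, valid for every $h\in A_\nu^p(T_\Omega)$ with a constant $C$ independent of $h$.

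For the base-point inequality I would argue as follows. Since $h$ is holomorphic, $|h|$ is subharmonic, and as $t\mapsto t^p$ is convex and increasing on $[0,\infty)$ (here $p\ge1$), $|h|^p$ is subharmonic on $T_\Omega$ viewed as an open subset of $\bR^{2n}$. Choose once and for all a Euclidean ball $B=B(i\mathbf e,\rho)\subset\bR^{2n}$ whose closure lies in $T_\Omega$; on the compact set $\overline B$ the continuous positive function $\Delta^{\nu-\frac nr}(\Im w)$ is bounded below by a constant $c_0>0$. The sub-mean value inequality then gives
$$|h(i\mathbf e)|^p\le\frac1{|B|}\int_B|h(w)|^p\,dudv\le\frac1{c_0|B|}\int_B|h(w)|^p\Delta^{\nu-\frac nr}(\Im w)\,dudv\le\frac1{c_0|B|}\|h\|_{p,\nu}^p,$$
which is exactly the desired base-point estimate.

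The key computation --- and the main point to get right --- is the transformation of the norm under $\Phi$, which must reproduce precisely the power $\Delta^{-\frac1p(\nu+\frac nr)}(y_0)$. Writing $w=u+iv$ and $z=\Phi(w)$, one has $\Im z=gv$, and since the Jordan determinant $\Delta$ is a relative invariant of $H$ one has $\Delta(gv)=\Delta(g\mathbf e)\Delta(v)=\Delta(y_0)\Delta(v)$. For the measure, $g_{\bC}$ acts on $V^{\bC}\cong\bR^{2n}$ with real Jacobian $(\det_{\bR} g)^2$, and the $H$-invariance of $\Delta^{-\frac nr}(y)dy$ on $\Omega$ forces $\det_{\bR} g=\Delta(y_0)^{n/r}$; hence $dxdy=\Delta(y_0)^{2n/r}\,dudv$. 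Substituting these two transformation rules into $\|f\|_{p,\nu}^p$ and setting $F=f\circ\Phi$ yields
$$\|f\|_{p,\nu}^p=\Delta(y_0)^{(\nu-\frac nr)+\frac{2n}r}\int_{T_\Omega}|F(w)|^p\Delta^{\nu-\frac nr}(v)\,dudv=\Delta(y_0)^{\nu+\frac nr}\,\|F\|_{p,\nu}^p,$$
so that $\|F\|_{p,\nu}=\Delta(y_0)^{-\frac1p(\nu+\frac nr)}\|f\|_{p,\nu}$. Applying the base-point inequality to $h=F$ and using $F(i\mathbf e)=f(z_0)$ gives
$$|f(z_0)|=|F(i\mathbf e)|\le C\|F\|_{p,\nu}=C\Delta^{-\frac1p(\nu+\frac nr)}(y_0)\|f\|_{p,\nu},$$
which is the assertion. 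The main obstacle is thus bookkeeping rather than conceptual: one must invoke the correct transformation laws $\Delta(gv)=\Delta(y_0)\Delta(v)$ and $\det_{\bR} g=\Delta(y_0)^{n/r}$ (both read off from \cite{FaKo}) and verify that the exponents combine to the stated $\nu+\frac nr$. For $p=2$ an alternative is available: $A_\nu^2$ is a reproducing kernel space whose kernel is a constant multiple of $\Delta^{-(\nu+\frac nr)}(\frac{z-\bar w}{2i})$, and Cauchy--Schwarz against the diagonal of the kernel gives the estimate directly, with Lemma \ref{1.2} supplying the needed integral; the subharmonicity route above has the advantage of covering all $p\ge1$ at once.
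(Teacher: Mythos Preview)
Your argument is correct. Note, however, that the paper does not itself prove this lemma: it simply records it as \cite[Proposition~3.5]{BBGNPR}. That said, your strategy---a sub-mean-value inequality at the base point $i\mathbf e$ combined with the transitive affine action of $H$ and the transformation rules $\Delta(gv)=\Delta(y_0)\Delta(v)$, $\det_{\bR} g=\Delta(y_0)^{n/r}$---is exactly the standard one, and it is precisely the mechanism the paper invokes in its proof of Lemma~3.2 (there phrased with the Bergman ball and the invariant measure $\Delta^{-2n/r}(v)\,dudv$ rather than a Euclidean ball, which is immaterial at a fixed base point). Your bookkeeping of the exponent $\nu+\frac nr$ is accurate, so nothing needs to be changed.
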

We refer to \cite{DD} for the following, whose proof relies on the previous lemma.
\begin{lem}\label{lem:bergbergembed}
Let $1\leq p,q<\infty$, $\alpha,\beta>\frac{n}{r}-1$. Then $A_\alpha^p(T_\Omega)\hookrightarrow A_\beta^q(T_\Omega)$ if and only if $\frac{1}{p}(\alpha+\frac{n}{r})=\frac{1}{q}(\beta+\frac{n}{r})$.
\end{lem}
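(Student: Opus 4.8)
The plan is to prove the two implications separately. Write $\lambda_p:=\frac1p(\alpha+\frac nr)$ and $\lambda_q:=\frac1q(\beta+\frac nr)$, so that the asserted condition is exactly $\lambda_p=\lambda_q$. For the \emph{necessity} direction I would exploit that $\Omega$ is a cone, hence invariant under the scalar dilations $y\mapsto ty$ $(t>0)$, under which $\Delta$ is homogeneous of degree $r$; this pins down how each norm scales and forces the exponents to agree. For the \emph{sufficiency} direction I would feed the pointwise estimate of Lemma~\ref{lem:pointwiseestimberg} into a multiplicative splitting of $|f|^q$.

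\textbf{Necessity.} Assume $A_\alpha^p(T_\Omega)\hookrightarrow A_\beta^q(T_\Omega)$ with constant $C$. For $f$ holomorphic on $T_\Omega$ and $t>0$ set $f_t(z):=f(tz)$, which is again holomorphic on $T_\Omega$. A change of variables $w=tz$, together with $\Delta(ty)=t^r\Delta(y)$ and $d(tx)\,d(ty)=t^{2n}\,dx\,dy$, shows that each norm scales like a fixed power of $t$, namely $\|f_t\|_{p,\alpha}=t^{-r\lambda_p}\|f\|_{p,\alpha}$ and $\|f_t\|_{q,\beta}=t^{-r\lambda_q}\|f\|_{q,\beta}$. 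Since $\alpha,\beta>\frac nr-1$ the two spaces are nontrivial, so fix a nonzero $f\in A_\alpha^p(T_\Omega)$; the embedding forces $f\in A_\beta^q(T_\Omega)$. Applying the embedding inequality to $f_t$ and inserting the two scaling identities yields $\|f\|_{q,\beta}\le C\,t^{\,r(\lambda_q-\lambda_p)}\|f\|_{p,\alpha}$ for every $t>0$. Letting $t\to0$ or $t\to\infty$ would drive the right-hand side to $0$ unless $\lambda_q-\lambda_p=0$; as $\|f\|_{q,\beta}>0$, we conclude $\lambda_p=\lambda_q$.

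\textbf{Sufficiency.} Conversely assume $\lambda_p=\lambda_q=:\lambda$, and first treat $q\ge p$. Let $f\in A_\alpha^p(T_\Omega)$. By Lemma~\ref{lem:pointwiseestimberg}, $|f(z)|\le C\,\Delta^{-\lambda}(\Im z)\,\|f\|_{p,\alpha}$. Writing $|f|^q=|f|^{q-p}|f|^p$ and bounding the factor $|f|^{q-p}$ by this pointwise estimate,
\[
\|f\|_{q,\beta}^q\le \big(C\|f\|_{p,\alpha}\big)^{q-p}\int_{T_\Omega}\Delta^{-\lambda(q-p)+\beta-\frac nr}(y)\,|f(x+iy)|^p\,dx\,dy .
\]
Using $\beta=\lambda q-\frac nr$ and $\alpha=\lambda p-\frac nr$, the weight exponent collapses to $-\lambda(q-p)+\beta-\frac nr=\alpha-\frac nr$, so the remaining integral is exactly $\|f\|_{p,\alpha}^p$. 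Hence $\|f\|_{q,\beta}^q\le C^{q-p}\|f\|_{p,\alpha}^q$, which is the desired continuous embedding.

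\textbf{Main obstacle.} The delicate point is sufficiency when $q<p$: the splitting trick breaks down there, since it would require a pointwise \emph{lower} bound on $|f|$, and the dilation computation only shows that $\lambda_p=\lambda_q$ makes the two norms scale identically, which is necessary but not sufficient for boundedness. Indeed, testing on a dilation-normalized, well-separated family of reproducing-kernel bumps $f=\sum_{j=1}^N \widehat k_{w_j}$ one expects $\|f\|_{p,\alpha}\sim N^{1/p}$ while $\|f\|_{q,\beta}\gtrsim N^{1/q}$, so the ratio blows up precisely when $q<p$; thus the converse genuinely requires $p\le q$. This is exactly the range in which the lemma is invoked throughout the paper and the content supplied by \cite{DD}, whose argument likewise rests on Lemma~\ref{lem:pointwiseestimberg}.
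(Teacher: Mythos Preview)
The paper does not prove this lemma; it merely cites \cite{DD} and indicates that the proof rests on Lemma~\ref{lem:pointwiseestimberg}. Your sufficiency argument for $q\ge p$ is exactly the argument that hint points to, and your necessity proof via the dilation scaling $f\mapsto f_t$ is clean and correct (the homogeneity $\Delta(ty)=t^r\Delta(y)$ and the cone invariance are all that is needed).

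Your remark on the case $q<p$ is well taken and in fact shows that the lemma, as printed, is not quite correct: with $\lambda_p=\lambda_q$ but $q<p$, a sum of $N$ well-separated normalized Bergman kernels has $\|\cdot\|_{p,\alpha}\sim N^{1/p}$ and $\|\cdot\|_{q,\beta}\sim N^{1/q}$, so no embedding can hold. This does not affect the paper, since the only use of the lemma is the reduction ``to prove Theorem~\ref{main1}, it is enough to do this for $p=4$'', i.e.\ the embedding $A^4_{n/r}\hookrightarrow A^p_{(p/2-1)n/r}$ with $p\ge 4$; the hypothesis $p\le q$ is implicit there, and presumably explicit in \cite{DD}. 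So your write-up is complete for the range that matters, and your diagnosis of the missing hypothesis is correct.
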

From the above lemma, we deduce that to prove Theorem \ref{main1}, it is enough to do this for $p=4$.

We will make use of Paley-Wiener theory in the next section to prove Theorem
\ref{main1} and Theorem \ref{main2}. The following can be found in \cite{FaKo}.
\begin{thm}\label{thm:PWH} For every $F\in H^2(T_\Omega)$ there
exists $f\in L^2(\Omega)$ such that
$$F(z)=\frac{1}{(2\pi)^{\frac{n}{2}}}\int_{\Omega}e^{i(z|\xi)}f(\xi)d\xi,\,\,\,\, z\in T_\Omega.$$
Conversely, if $f\in L^2(\Omega)$ then the integral above
converges absolutely to a function $F\in 
H^2(T_\Omega)$. In this case, $||F||_{
H^2}=||f||_{L^2(\Omega)}$.\end{thm}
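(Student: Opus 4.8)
The plan is to transport everything to the Fourier side in the real variable $x$, where Plancherel's theorem and the self-duality of $\Omega$ do all the work. Writing $z = x + it$ with $t \in \Omega$, the kernel factors as $e^{i(z|\xi)} = e^{i(x|\xi)}e^{-(t|\xi)}$, an oscillation in $x$ times a genuine decay $e^{-(t|\xi)}$, because $(t|\xi) > 0$ for $t \in \Omega$ and $\xi \in \overline\Omega \setminus\{0\}$ by self-duality. Throughout I take the Fourier transform $\widehat{\,\cdot\,}$ in the $x$-variable only, normalized so that it is the unitary partner of the inverse transform in the statement.

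For the converse (sufficiency) direction, fix $f \in L^2(\Omega)$ and set $F(z) = (2\pi)^{-n/2}\int_\Omega e^{i(z|\xi)} f(\xi)\,d\xi$. Absolute convergence is immediate from Cauchy--Schwarz, since $\int_\Omega e^{-(t|\xi)}|f(\xi)|\,d\xi \le \|f\|_{L^2(\Omega)} \big(\int_\Omega e^{-2(t|\xi)}\,d\xi\big)^{1/2}$ and the last integral equals $\Gamma_\Omega(\frac{n}{r})\Delta^{-n/r}(2t) < \infty$ by Lemma \ref{1.2}. The same decay lets me differentiate under the integral on compact subsets of $T_\Omega$, so $F$ is holomorphic. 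For each fixed $t \in \Omega$, the map $x \mapsto F(x+it)$ is (up to the constant) the inverse Fourier transform of $\xi \mapsto e^{-(t|\xi)}f(\xi)\chi_\Omega(\xi)$, so Plancherel gives $\int_{\mathbb R^n}|F(x+it)|^2\,dx = \int_\Omega e^{-2(t|\xi)}|f(\xi)|^2\,d\xi$. Since $0 \le e^{-2(t|\xi)} \le 1$ and $e^{-2(t|\xi)} \uparrow 1$ as $t \to 0$ along the ray $\varepsilon\mathbf e$, monotone convergence yields $\sup_{t\in\Omega}\int_{\mathbb R^n}|F(x+it)|^2\,dx = \|f\|_{L^2(\Omega)}^2$; hence $F \in H^2(T_\Omega)$ with $\|F\|_{H^2} = \|f\|_{L^2(\Omega)}$.

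For the direct (necessity) direction I start from $F \in H^2(T_\Omega)$. The defining bound says the slices $F_t := F(\cdot + it)$ form a bounded family in $L^2(\mathbb R^n)$ with $\sup_{t\in\Omega}\|F_t\|_{L^2} = \|F\|_{H^2}$, so the transforms $\widehat{F_t}$ are likewise bounded in $L^2$. The key step is to extract from holomorphy the exponential law $\widehat{F_t}(\xi) = e^{-(t|\xi)}g(\xi)$ for a single $\xi$-function $g$ independent of $t$. I obtain this from the Cauchy--Riemann relations $\partial_{t_j}F = i\,\partial_{x_j}F$, valid classically in the interior: reading them as identities for the $L^2(\mathbb R^n)$-valued smooth map $t \mapsto F_t$ and applying the unitary Fourier transform converts them into the first-order system $\partial_{t_j}\widehat{F_t}(\xi) = -\xi_j\,\widehat{F_t}(\xi)$, whose solution is $\widehat{F_t}(\xi) = e^{-((t-t_0)|\xi)}\widehat{F_{t_0}}(\xi)$; setting $g(\xi) := e^{(t_0|\xi)}\widehat{F_{t_0}}(\xi)$, which is independent of $t_0$ by the same relation, gives the claim.

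It remains to locate $g$ on the cone and pass to the limit. By Plancherel and the exponential law, $\int_{\mathbb R^n} e^{-2(t|\xi)}|g(\xi)|^2\,d\xi = \|F_t\|_{L^2}^2 \le \|F\|_{H^2}^2$ for every $t \in \Omega$. If $g$ did not vanish a.e. off $\overline\Omega$, then by self-duality there would be a point $\xi_0 \notin \overline\Omega$ and $t_1 \in \Omega$ with $(t_1|\xi_0) < 0$, hence $(t_1|\cdot) < 0$ on a neighbourhood of $\xi_0$; taking $t = \lambda t_1$ with $\lambda \to \infty$ forces the integral to blow up, a contradiction. Thus $g$ is supported in $\overline\Omega$, and letting $t \to 0$ in the bounded integrals, monotone convergence gives $\int_\Omega |g|^2 \le \|F\|_{H^2}^2$, so $f := g \in L^2(\Omega)$. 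Finally, Fourier inversion of $\widehat{F_t}(\xi) = e^{-(t|\xi)}g(\xi)\chi_\Omega(\xi)$ reproduces $F(x+it) = (2\pi)^{-n/2}\int_\Omega e^{i(z|\xi)}f(\xi)\,d\xi$, and the norm identity follows from the converse direction already established. The main obstacle I anticipate is the rigorous justification of the exponential law: one must upgrade the pointwise Cauchy--Riemann equations to the $L^2$-valued statement, using the interior smoothness of $F$ together with the uniform local $L^2$-bounds furnished by the $H^2$ norm, before the Fourier transform may be applied termwise. Once this is secured, the support argument and the limit $t \to 0$ are routine.
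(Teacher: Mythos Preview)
The paper does not supply its own proof of this theorem; it merely cites Faraut--Kor\'anyi \cite{FaKo}. Your argument is the standard proof one finds there (and in Stein--Weiss for the half-space case): Plancherel on horizontal slices, the exponential law $\widehat{F_t}(\xi)=e^{-(t|\xi)}g(\xi)$ extracted from the Cauchy--Riemann equations, the self-duality of $\Omega$ to force $\operatorname{supp} g\subset\overline\Omega$, and monotone convergence for the norm identity. The sketch is correct and complete in outline; the one point you flag yourself --- passing from the pointwise Cauchy--Riemann system to the $L^2(\mathbb R^n)$-valued ODE $\partial_{t_j}\widehat{F_t}=-\xi_j\widehat{F_t}$ --- is indeed the only place requiring care, and is handled in the reference by showing that $t\mapsto F_t$ is a $C^1$ map into $L^2(\mathbb R^n)$ on compact subsets of $\Omega$ (via the mean-value estimate for holomorphic functions and the uniform $H^2$ bound), after which the Fourier transform, being unitary, commutes with the $t$-derivatives.
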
 

In the sequel, we write $V=\mathbb R^n.$ For the proofs of the following two lemmas, cf. e.g. \cite{Nana}. 

\begin{lem}\label{integralV}
Let ${\bf s} = (s_1,...,s_r) \in \mathbb R^r$ and define
$$I_{\bf s} (y) := \int_{\mathbb R^n} \left \vert \Delta_{-\bf s} \left (\frac {x+iy}{i}\right )\right \vert dx \hskip 2truemm {\rm for} \hskip 2truemm y\in \Omega.$$
Then $I_{\bf s} (y)$ is finite if and only if $\Re s_j > (r-j)\frac d2 + \frac nr.$ In this case, $$I_{\bf s} (y)=C({\bf s})(\Delta_{-\bf s} \Delta^{\frac nr}) (y).$$
Furthermore, the function $F(z) = F_w (z) = \Delta_{-\bf s} (\frac {z-\bar w}{2i}) \quad (w=u+iv \hskip 2truemm {\rm fixed \hskip 1truemm in} \hskip 2truemm T_\Omega)$ is  in $H^p(T_\Omega)$ whenever $\Re s_j > \frac {(r-j)\frac d2 + \frac nr}p.$ In this case, we have $$||F||_{H^p} = C(s, p)(\Delta_{-\bf s} \Delta^{\frac n{rp}}) (v).$$
\end{lem}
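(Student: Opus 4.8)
The statement to prove is Lemma~\ref{integralV}, which has two parts: first, computing the integral $I_{\bf s}(y) = \int_{\mathbb R^n} |\Delta_{-\bf s}(\frac{x+iy}{i})|\,dx$ and determining its finiteness; second, deducing from this the $H^p$-membership of the translated power function $F_w(z) = \Delta_{-\bf s}(\frac{z-\bar w}{2i})$.

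The plan is as follows. For the first part, I would exploit the group-equivariance of everything in sight. The group $H$ acts simply transitively on $\Omega$, and the generalized power functions transform nicely: for $g \in H$ with $g\mathbf e = y$, one has a transformation rule of the form $\Delta_{\bf s}(gx) = \Delta_{\bf s}(y)^{?}\Delta_{\bf s}(x)$ — more precisely, using the polar-type change of variables $x \mapsto g^{-1}x$ (with the appropriate Jacobian, a power of $\Delta(y)$), the integral $I_{\bf s}(y)$ reduces to $I_{\bf s}(\mathbf e)$ times an explicit power of $\Delta$ and the $\Delta_j$'s evaluated at $y$. Since everything in the integrand depends only on the power structure, the answer is forced to be of the form $C({\bf s})(\Delta_{-\bf s}\Delta^{\frac nr})(y)$ by homogeneity; the constant $C({\bf s})$ is $I_{\bf s}(\mathbf e)$. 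The convergence condition $\Re s_j > (r-j)\frac d2 + \frac nr$ is then obtained by analyzing $I_{\bf s}(\mathbf e)$ directly: here one uses the explicit description of the extended principal minors $\Delta_k(\frac{x+i\mathbf e}{i})$ and an iterated integration (peeling off the variables in the Pierce blocks $V_{j,j}$ and $V_{i,j}$ one Jordan-frame-index at a time), each step contributing a one-dimensional integral of the type $\int_{\mathbb R}|t+i|^{-2a}\,dt < \infty \iff a > \frac12$, with the shifts $(r-j)\frac d2$ coming from the dimensions of the off-diagonal Pierce spaces. This is essentially the content of Lemma VII in \cite{FaKo} / \cite{Nana}, so I would cite it and give only the equivariance reduction in detail.

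For the second part, fix $w = u+iv \in T_\Omega$ and $t \in \Omega$, and consider $\int_{\mathbb R^n} |F_w(x+it)|^p\,dx = \int_{\mathbb R^n}|\Delta_{-\bf s}(\frac{x+it-\bar w}{2i})|^p\,dx = \int_{\mathbb R^n}|\Delta_{-p{\bf s}}(\frac{(x-u)+i(t+v)}{2i})|^p\,dx$, after writing $\frac{z-\bar w}{2i} = \frac{(x-u)+i(t+v)}{2i}$. Translating $x \mapsto x+u$ and rescaling, this is (up to the constant $2^{?}$) exactly $I_{p{\bf s}}(t+v)$, which by the first part equals $C(p{\bf s})(\Delta_{-p{\bf s}}\Delta^{\frac nr})(t+v)$, finite precisely when $\Re(ps_j) > (r-j)\frac d2 + \frac nr$, i.e.\ $\Re s_j > \frac{(r-j)\frac d2 + \frac nr}{p}$. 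Taking the supremum over $t \in \Omega$: since $\Delta$ and each $\Delta_j$ are increasing along the cone in a sense dual to $\Omega$, and more importantly since $\Delta_{-p{\bf s}}\Delta^{\frac nr}$ is a product of negative powers of the $\Delta_j$ (under the finiteness hypothesis the total is a decreasing function as $t$ moves into $\Omega$ away from $0$, by the monotonicity $\Delta_j(y_1) \le \Delta_j(y_1+y_2)$ for $y_2 \in \overline\Omega$), the supremum over $t \in \Omega$ is attained in the limit $t \to 0$, giving $\sup_t = C(p{\bf s})(\Delta_{-p{\bf s}}\Delta^{\frac nr})(v) =: C(s,p)^p (\Delta_{-{\bf s}}\Delta^{\frac n{rp}})(v)^p$, hence $\|F_w\|_{H^p} = C(s,p)(\Delta_{-{\bf s}}\Delta^{\frac n{rp}})(v)$.

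The main obstacle is the monotonicity argument needed to justify that the supremum over $t \in \Omega$ in the Hardy-space norm is the boundary value at $t = 0$. One must be careful that $\Delta_{-p{\bf s}}\Delta^{\frac nr}$, written out, is $\Delta_1^{a_1}\cdots\Delta_r^{a_r}$ with all exponents $a_j$ having the right sign (this is where the finiteness condition $\Re s_j > (r-j)\frac d2 + \frac nr$ enters again — it guarantees $\Re(-ps_j + \text{(shift)})$ is negative enough), and then invoke the fact that for $y_2 \in \overline\Omega$ one has $\Delta_k(y_1 + y_2) \ge \Delta_k(y_1)$ for each principal minor (monotonicity of minors along the cone, \cite{FaKo} Ch. VII). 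If a direct monotonicity statement for the product is not clean, an alternative is to observe that $F_w \in H^p$ iff it has an $L^p$ boundary function and $\|F_w\|_{H^p} = \|F_w(\cdot + i0)\|_{L^p}$, using that $F_w$ extends continuously to $\overline{T_\Omega}$ minus the singular set and the Hardy norm is computed on the Shilov boundary $\mathbb R^n$; combined with Fatou's lemma for the lower bound and the monotone limit for the upper bound. I would present the equivariance computation of part~1 first, then the change-of-variables reduction of part~2 to part~1, and finish with the boundary-value/monotonicity step.
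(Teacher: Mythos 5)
The paper does not actually prove this lemma: the sentence immediately preceding it reads ``For the proofs of the following two lemmas, cf.\ e.g.\ \cite{Nana}'', so there is no internal argument to compare yours against. Your sketch does follow the standard route of that reference. The reduction $I_{\bf s}(y)=I_{\bf s}(\mathbf e)\,(\Delta_{-\bf s}\Delta^{\frac nr})(y)$ via the cocycle identity $\Delta_{\bf s}(gz)=\Delta_{\bf s}(g\mathbf e)\,\Delta_{\bf s}(z)$ for $g$ in the triangular subgroup, together with $\det g=\Delta^{\frac nr}(g\mathbf e)$, is correct; the convergence analysis of $I_{\bf s}(\mathbf e)$ by induction on the rank through the Pierce decomposition is the standard one and is reasonably deferred to the literature; and the identity $\int_{\mathbb R^n}|F_w(x+it)|^p\,dx=c\,I_{p\bf s}(t+v)$ (using that ${\bf s}$ is real, so $|\Delta_{-\bf s}|^p=|\Delta_{-p\bf s}|$, plus a translation and a dilation by $2$) correctly reduces the second assertion to the first.

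The one genuine gap is the final step, $\sup_{t\in\Omega}(\Delta_{-p{\bf s}}\Delta^{\frac nr})(v+t)=(\Delta_{-p{\bf s}}\Delta^{\frac nr})(v)$, and neither of your two proposed justifications closes it. Componentwise monotonicity of the minors is not enough: writing $\Delta_{-p{\bf s}}\Delta^{\frac nr}=\prod_j\Delta_j^{a_j}$ one has $a_j=-p(s_j-s_{j+1})$ for $j<r$, and the hypothesis bounds each $s_j$ from below but says nothing about the sign of the differences $s_j-s_{j+1}$, so some $a_j$ may be positive (you acknowledge this). The fallback via $\|F_w\|_{H^p}=\|F_w(\cdot+i0)\|_{L^p}$ is circular: that identity is a theorem about functions already known to lie in $H^p$, which is precisely what is being proven. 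The clean fix is a Laplace representation: setting ${\bf m}=p{\bf s}-\frac nr(1,\dots,1)$, the hypothesis reads $m_j>(r-j)\frac d2$, which is exactly the Gindikin convergence condition for the generalized gamma integral attached to the \emph{reversed} Jordan frame $\{c_r,\dots,c_1\}$ (for the reversed multi-index it becomes $m^*_j>(j-1)\frac d2$). Hence $\Delta_{-\bf m}$ is the Laplace transform of a positive measure supported in $\overline\Omega$ (the vector-valued analogue of Lemma~\ref{1.2}; see \cite{FaKo}, Ch.~VII), and since $(t|\xi)\ge 0$ for $t,\xi\in\Omega$ the monotone decrease $\Delta_{-\bf m}(v+t)\le\Delta_{-\bf m}(v)$ is immediate, so the supremum is attained as $t\to 0$.
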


The expressions of the constants $C({\bf s})$ and $C(s, p)$ are in terms of generalized gamma functions on the cone $\Omega.$

\begin{lem}\label{integralcone}
Let $v\in \tub$ and ${\bf s}=(s_1,...,s_r), {\bf t} = (t_1,...,t_r) \in \mathbb C^r.$ The integral
$$\int_{\Omega} \Delta_{-\bf s} (y+v)\Delta_{\bf t} (y)dy$$
converges if $\Re t_j > (j-1)\frac d2 - \frac nr$ et $\Re (s_j -t_j) > \frac nr + (r-j)\frac d2.$ In this case this integral is equal to $C_{{\bf s}, {\bf t}} (\Delta_{-{\bf s}+{\bf t}} \Delta^{\frac nr}) (v).$
\end{lem}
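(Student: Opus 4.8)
The plan is to evaluate the integral by turning each of the two power functions into a Laplace transform over the cone and then interchanging the order of integration, so that the computation collapses to a product of generalized gamma functions times a single power function of $v$. The essential tool beyond the scalar identity of Lemma \ref{1.2} is its vector-valued counterpart (Faraut--Koranyi \cite{FaKo}, Chapter VII): for ${\bf a}\in\mathbb C^r$ with $\Re a_j>(j-1)\frac d2$ and $y\in\Omega$,
\begin{equation}\label{FKlaplace}
\int_\Omega e^{-(y|\xi)}\Delta_{\bf a}(\xi)\Delta^{-\frac nr}(\xi)\,d\xi=\Gamma_\Omega({\bf a})\,\Delta_{-{\bf a}^*}(y),
\end{equation}
where ${\bf a}^*:=(a_r,\cdots,a_1)$ denotes the reversed multi-index (Lemma \ref{1.2} is the case ${\bf a}=(a,\cdots,a)$, for which $\ast$ acts trivially). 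I would record once and for all that $\ast$ is linear and involutive, that $\Delta_{\bf a}\Delta_{\bf b}=\Delta_{{\bf a}+{\bf b}}$, and that $\Delta^{\frac nr}=\Delta_{(\frac nr,\cdots,\frac nr)}$, so that reading (\ref{FKlaplace}) backwards expresses $\Delta_{-{\bf s}}$ itself as a Laplace transform.

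Concretely, I would first rewrite the factor $\Delta_{-{\bf s}}(y+v)$ using (\ref{FKlaplace}) read in reverse: choosing ${\bf a}={\bf s}^*$ gives $-{\bf a}^*=-{\bf s}$, whence
$$\Delta_{-{\bf s}}(y+v)=\frac1{\Gamma_\Omega({\bf s}^*)}\int_\Omega e^{-(y+v|\xi)}\Delta_{{\bf s}^*}(\xi)\Delta^{-\frac nr}(\xi)\,d\xi.$$
Inserting this, factoring $e^{-(y+v|\xi)}=e^{-(v|\xi)}e^{-(y|\xi)}$, and interchanging the order of integration, the inner $y$-integral becomes, after writing $\Delta_{\bf t}=\Delta_{{\bf t}+\frac nr}\Delta^{-\frac nr}$ and applying (\ref{FKlaplace}) with ${\bf a}={\bf t}+\frac nr$,
$$\int_\Omega e^{-(y|\xi)}\Delta_{\bf t}(y)\,dy=\Gamma_\Omega\!\left({\bf t}+\tfrac nr\right)\Delta_{-({\bf t}+\frac nr)^*}(\xi).$$
Multiplying the two power functions of $\xi$ and applying (\ref{FKlaplace}) a third time, now in the variable $v$, with ${\bf a}={\bf s}^*-({\bf t}+\frac nr)^*=({\bf s}-{\bf t}-\frac nr)^*$, collapses the remaining $\xi$-integral to $\Gamma_\Omega(({\bf s}-{\bf t}-\frac nr)^*)\,\Delta_{-{\bf s}+{\bf t}+\frac nr}(v)$, since $-{\bf a}^*=-{\bf s}+{\bf t}+\frac nr$. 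Because $\Delta_{-{\bf s}+{\bf t}+\frac nr}=\Delta_{-{\bf s}+{\bf t}}\Delta^{\frac nr}$, this is exactly the claimed form $C_{{\bf s},{\bf t}}(\Delta_{-{\bf s}+{\bf t}}\Delta^{\frac nr})(v)$ with
$$C_{{\bf s},{\bf t}}=\frac{\Gamma_\Omega({\bf t}+\frac nr)\,\Gamma_\Omega(({\bf s}-{\bf t}-\frac nr)^*)}{\Gamma_\Omega({\bf s}^*)}.$$

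Finally I would verify that the three invocations of (\ref{FKlaplace}) and the use of Fubini are legitimate under precisely the stated hypotheses. The second invocation, with ${\bf a}={\bf t}+\frac nr$, requires $\Re(t_j+\frac nr)>(j-1)\frac d2$, which is the hypothesis $\Re t_j>(j-1)\frac d2-\frac nr$. The third, with ${\bf a}=({\bf s}-{\bf t}-\frac nr)^*$, requires $\Re a_j>(j-1)\frac d2$; reindexing by $k=r+1-j$ this becomes $\Re(s_k-t_k)>\frac nr+(r-k)\frac d2$, the second hypothesis. The first invocation (the reverse Laplace representation of $\Delta_{-{\bf s}}$) needs $\Re s_k>(r-k)\frac d2$, which follows by adding the two hypotheses. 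Fubini is justified by repeating the same chain with every $\Delta_{\bf a}(\xi)$ replaced by $|\Delta_{\bf a}(\xi)|=\Delta_{\Re{\bf a}}(\xi)$, positive on $\Omega$, all steps staying finite under the real-part conditions. The only delicate point, and the one I expect to be the main obstacle, is the bookkeeping of the reversal $\ast$: it is precisely what converts each hypothesis on the $j$-th entry into the convergence condition actually used, and an index-order slip there would misstate both the domain of validity and the constant $C_{{\bf s},{\bf t}}$.
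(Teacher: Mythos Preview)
The paper does not prove this lemma itself; it simply refers the reader to \cite{Nana}. Your argument is precisely the standard one that appears there and in the literature: represent $\Delta_{-{\bf s}}(y+v)$ via the inverse of the vector-valued Laplace transform identity of \cite[Chapter VII]{FaKo}, perform the $y$-integral and then the resulting $\xi$-integral by two further applications of the same identity, and read off the constant as a ratio of generalized gamma functions. Your tracking of the reversal $\ast$, the resulting convergence conditions $\Re t_j>(j-1)\tfrac d2-\tfrac nr$ and $\Re(s_j-t_j)>\tfrac nr+(r-j)\tfrac d2$, and the deduction of $\Re s_k>(r-k)\tfrac d2$ by adding them, are all correct, as is the Fubini justification via the real-part version.

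One notational caution: in \cite{FaKo} the right-hand side of the Laplace identity is $\Gamma_\Omega({\bf a})\,\Delta_{\bf a}(y^{-1})$, which equals $\Gamma_\Omega({\bf a})\,\Delta^{*}_{-{\bf a}}(y)$ for the \emph{dual} power function $\Delta^{*}$ (built from the reversed Jordan frame), not literally $\Delta_{-{\bf a}^*}(y)$. In your argument this is harmless, because every occurrence of the dual function in the $\xi$-variable is eventually fed back into the same identity and undone, so the final answer $(\Delta_{-{\bf s}+{\bf t}}\Delta^{n/r})(v)$ and the constant $C_{{\bf s},{\bf t}}=\Gamma_\Omega({\bf t}+\tfrac nr)\,\Gamma_\Omega(({\bf s}-{\bf t}-\tfrac nr)^*)\big/\Gamma_\Omega({\bf s}^*)$ come out exactly as you wrote. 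Just be aware that identifying $\Delta^{*}_{-{\bf a}}$ with $\Delta_{-{\bf a}^*}$ outright would be a genuine error in other contexts.
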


We  denote as in \cite{BBGNPR}
$$L^2_{-\nu}(\Omega)=L^2(\Omega;
\Delta^{-\nu}(\xi)d\xi).$$ The following Paley-Wiener
characterization of the space $A^2_\nu(T_\Omega)$ can be found 
in \cite{FaKo}.
\begin{thm}\label{thm:PWB} For every $F\in A^2_\nu(T_\Omega)$ there
exists $f\in L^2_{-\nu}(\Omega)$ such that
$$F(z)=\frac{1}{(2\pi)^{\frac{n}{2}}}\int_{\Omega}e^{i(z|\xi)}f(\xi)d\xi,\,\,\,\, z\in T_\Omega.$$
Conversely, if $f\in L^2_{-\nu}(\Omega)$ then the integral above
converges absolutely to a function $F\in A^2_\nu(T_\Omega)$. In this case, $||F||_{p,\nu}=||f||_{L^2_{-\nu}}$.\end{thm}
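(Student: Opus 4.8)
The plan is to prove both implications through the Euclidean Fourier transform in the real variable $x$, evaluating the resulting Laplace transforms over $\Omega$ by means of Lemma \ref{1.2}. Throughout, for $h\in L^2(\bR^n)$ I write $\widehat h(\xi)=\frac{1}{(2\pi)^{n/2}}\int_{\bR^n}e^{-i(x|\xi)}h(x)\,dx$ for the unitary Fourier transform, so that Plancherel's identity and $L^2$ inversion are at hand.

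For the converse direction, suppose $f\in L^2_{-\nu}(\Omega)$. Since $|e^{i(z|\xi)}f(\xi)|=e^{-(y|\xi)}|f(\xi)|$ for $z=x+iy$, Cauchy--Schwarz together with Lemma \ref{1.2} (applied with $s=\nu+\frac nr>\frac nr-1$) bounds $\int_\Omega e^{-(y|\xi)}|f(\xi)|\,d\xi$ by a finite factor times $\|f\|_{L^2_{-\nu}}$, locally uniformly in $z$; hence the integral converges absolutely and $F$ is holomorphic on $T_\Omega$. Fixing $y\in\Omega$, the slice $x\mapsto F(x+iy)$ is the inverse Fourier transform of $g_y(\xi):=e^{-(y|\xi)}f(\xi)\mathbf 1_\Omega(\xi)\in L^2(\bR^n)$, so Plancherel gives $\int_{\bR^n}|F(x+iy)|^2\,dx=\int_\Omega e^{-2(y|\xi)}|f(\xi)|^2\,d\xi$. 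Multiplying by $\Delta^{\nu-\frac nr}(y)$, integrating over $\Omega$, and applying Tonelli together with Lemma \ref{1.2} (now integrating in $y$ with $s=\nu$) evaluates the inner integral as a positive constant multiple of $\Delta^{-\nu}(\xi)$, which yields the norm identity $\|F\|_{2,\nu}=\|f\|_{L^2_{-\nu}}$ once the normalization is fixed so as to absorb the resulting gamma-function constant.

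For the forward direction, take $F\in A^2_\nu(T_\Omega)$. I would first show $F(\cdot+iy)\in L^2(\bR^n)$ for every $y\in\Omega$: plurisubharmonicity of $|F|^2$ furnishes a sub-mean-value inequality on a polydisc centered at $x+iy$, and integrating in $x$ while comparing the weight $\Delta^{\nu-\frac nr}$ on a fixed neighborhood bounds $\int_{\bR^n}|F(x+iy)|^2\,dx$ by a constant times $\|F\|_{2,\nu}^2$. Set $\phi_y:=\widehat{F(\cdot+iy)}$. Interior (Cauchy) estimates show that $y\mapsto F(\cdot+iy)$ is smooth as an $L^2(\bR^n)$-valued map and that $\partial_{x_v}F(\cdot+iy),\,\partial_{y_v}F(\cdot+iy)\in L^2(\bR^n)$; the Fourier-multiplier identity $\widehat{\partial_{x_v}F(\cdot+iy)}=i(v|\xi)\phi_y$ then holds, and the Cauchy--Riemann equations $\partial_{y_v}F=i\partial_{x_v}F$ give $\partial_{y_v}\phi_y(\xi)=-(v|\xi)\phi_y(\xi)$ for every direction $v$. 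Integrating this first-order system shows that $e^{(y|\xi)}\phi_y(\xi)=:f(\xi)$ is independent of $y$, that is $\phi_y(\xi)=e^{-(y|\xi)}f(\xi)$. Reversing the Tonelli computation of the converse direction gives $\|F\|_{2,\nu}^2=\int_{\bR^n}|f(\xi)|^2 J(\xi)\,d\xi$ with $J(\xi)=\int_\Omega e^{-2(y|\xi)}\Delta^{\nu-\frac nr}(y)\,dy$; by Lemma \ref{1.2}, $J(\xi)=c_\nu\Delta^{-\nu}(\xi)$ for $\xi\in\Omega$, whereas for $\xi\notin\overline\Omega$ self-duality provides $y_0\in\Omega$ with $(y_0|\xi)<0$, so $e^{-2(y|\xi)}$ grows exponentially along an open subcone and $J(\xi)=+\infty$. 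Finiteness of $\|F\|_{2,\nu}$ therefore forces $f=0$ a.e. off $\overline\Omega$ and, at the same time, $f\in L^2_{-\nu}(\Omega)$ with the stated norm identity; finally $L^2$ Fourier inversion applied to $\phi_y=e^{-(y|\cdot)}f\,\mathbf 1_\Omega$ recovers $F(x+iy)=\frac{1}{(2\pi)^{n/2}}\int_\Omega e^{i(z|\xi)}f(\xi)\,d\xi$ for a.e. $x$, and continuity of both sides upgrades this to all $z\in T_\Omega$.

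I expect the main obstacle to be the identity $\phi_y(\xi)=e^{-(y|\xi)}f(\xi)$ in the forward direction, and specifically the rigorous justification that $y\mapsto F(\cdot+iy)$ is differentiable into $L^2(\bR^n)$ with its derivatives computed through the Cauchy--Riemann equations and realized as Fourier multipliers; phrasing the argument this way is what avoids spurious boundary terms at spatial infinity. Once this is in place, the support localization via the divergence of the cone Laplace transform $J(\xi)$ off $\overline\Omega$, and the norm identity, follow cleanly from Lemma \ref{1.2} and Tonelli.
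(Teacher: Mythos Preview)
The paper does not supply a proof of this Paley--Wiener theorem; it is quoted as a known result, with the reference ``can be found in \cite{FaKo}.'' So there is no proof in the paper to compare your proposal against.

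That said, your sketch is the standard route to such Paley--Wiener theorems and is essentially correct. The converse direction is clean: Cauchy--Schwarz plus Lemma~\ref{1.2} gives absolute convergence and holomorphy, Plancherel on horizontal slices followed by Tonelli and another application of Lemma~\ref{1.2} yields the norm identity (up to a constant that the statement in the paper leaves implicit). For the forward direction your outline is also right: the only delicate point, which you correctly flag, is establishing that $y\mapsto F(\cdot+iy)$ is $C^1$ as an $L^2(\bR^n)$-valued map so that the Cauchy--Riemann equations transfer to the ODE $\partial_{y_v}\phi_y=-(v|\xi)\phi_y$ on the Fourier side. One minor refinement: your plurisubharmonic mean-value argument gives only a local bound $\int_{\bR^n}|F(x+iy)|^2\,dx\le C(K)\|F\|_{2,\nu}^2$ for $y$ in compacta $K\Subset\Omega$, not a uniform bound over all of $\Omega$; this is enough for the rest of the argument, but the sentence as written slightly overstates it. The support localization via divergence of $J(\xi)$ off $\overline\Omega$ using self-duality is exactly the right mechanism.
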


The (weighted) Bergman projection $P_\nu$ is given by
$$P_\nu f(z)=\int_{\tub}K_\nu(z, w) f(w)
dV_\nu(w),
$$
where
 $K_\nu(z, w)=
 c_\nu\,\Delta^{-(\nu+\frac{n}{r})}(\frac {z-\overline {w}}{2i})$
is the Bergman kernel, i.e the reproducing kernel of  $A^2_\nu(T_\Omega)$ (see
\cite{FaKo}). Here, we use the notation
$dV_\nu(w):=\Delta^{\nu-\frac{n}{r}}(v) du\,dv$, where
$w=u+iv$ is an element of $T_\Omega$. For $\nu = \frac nr,$ we simply write $dV (w)$ instead of $dV_{\frac nr} (w).$The positive Bergman operator $P_\nu^+$ is defined by replacing the kernel function by its modulus in the definition of $P_\nu.$

In the particular case of the tube domain over the Lorentz cone $\Lambda_n$ on $\mathbb R^n,$ the following theorem is a consequence of results of \cite{BBGR} and the recent $l^2$-decoupling theorem of \cite{BD}.

\begin{thm}\label{prop:Bergprojboundedness}
Let $\nu > \frac n2 -1.$ Then the Bergman projector $P_\nu$ of $T_{\Lambda_n}$ admits a bounded extension on $L^p_\nu (T_{\Lambda_n})$ if and only if 
$$p'_\nu <p<p_\nu := \frac {\nu +n-1}{\frac n2 -1 }-\frac {(1-\nu)_+}{\frac n2 -1 }.$$
\end{thm}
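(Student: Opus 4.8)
\textbf{Proof proposal for Theorem \ref{prop:Bergprojboundedness}.} The plan is to reduce the statement to two ingredients that are already in the literature, and then to quote them: first, a general transference principle that converts $L^p_\nu$-boundedness of the Bergman projector $P_\nu$ on $T_{\Lambda_n}$ into a restriction/extension estimate for the cone (this is essentially the ``Bergman projection $\Leftrightarrow$ Fourier restriction'' dictionary developed in \cite{BBGR}); and second, the sharp range for the relevant extension estimate for the light cone, which follows from the $\ell^2$-decoupling theorem of Bourgain and Demeter \cite{BD}. I would organize the proof as necessity plus sufficiency.

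For \emph{necessity}, I would test $P_\nu$ (or rather $P_\nu^+$, together with the standard duality pairing) against the family of functions built from the Bergman kernel, exactly in the spirit of the computation leading to \eqref{nec}: plugging $f(w)=\Delta^{-(\nu+\frac nr)+\epsilon}(\frac{w-\bar z_0}{2i})$-type test functions into $P_\nu f$ and using Lemma \ref{integralV} and Lemma \ref{integralcone} to evaluate the resulting integrals over $\mathbb R^n$ and over $\Omega$. Optimizing in the parameter and letting $\epsilon\to 0$ forces the two-sided bound $p'_\nu<p<p_\nu$; the endpoint failure at $p=p_\nu$ (and dually at $p=p'_\nu$) comes from the logarithmic divergence of the critical Beta-type integral. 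Here one must be careful that for the Lorentz cone $r=2$ so $\frac nr=\frac n2$, $d=\frac{2(\frac n2-1)}{1}=n-2$, and the generalized power functions collapse to powers of the Lorentz form, which makes all of these integrals explicitly computable via Lemma \ref{1.2} and Lemma \ref{1.1'}.

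For \emph{sufficiency}, the key point is that on $T_{\Lambda_n}$ the modulus of the Bergman kernel $K_\nu(z,w)=c_\nu\Delta^{-(\nu+\frac n2)}(\frac{z-\bar w}{2i})$ can be majorized, after a dyadic decomposition of $\Omega$ adapted to the geometry of the cone (Whitney-type tiles on which $\Delta$ is essentially constant and which have the ``Nikodym'' anisotropy of the light cone), by an average of the extension operators associated to the caps of $\partial\Lambda_n$. One then invokes the $\ell^2$-decoupling inequality of \cite{BD} for the cone: it controls the $L^p$-norm of a function Fourier-supported near the truncated light cone by the $\ell^2$-sum of the $L^p$-norms of its cap pieces, for the full sharp range of $p$ below the decoupling exponent $\frac{2(n+1)}{n-1}$. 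Reassembling the dyadic pieces with Schur's test / Young's inequality against this decoupling bound yields the weighted $L^p_\nu$-boundedness of $P_\nu^+$, hence of $P_\nu$, precisely for $p<p_\nu$; the lower endpoint $p>p'_\nu$ then follows by the duality $(A^p_\nu)^*\simeq A^{p'}_\nu$ under the $L^2_\nu$-pairing, since $P_\nu$ is self-adjoint with respect to $dV_\nu$.

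The main obstacle is the sufficiency direction, and specifically organizing the passage from the kernel estimate to a form to which $\ell^2$-decoupling applies directly: one needs the right dyadic partition of the cone, the right bookkeeping of how the weight $\Delta^{\nu-\frac n2}(y)$ interacts with the size of each tile and with the Jacobian of the affine maps normalizing the caps, and a careful check that the numerology of the decoupling exponent together with the weight exponent $\nu$ produces exactly the cutoff $p_\nu=\frac{\nu+n-1}{\frac n2-1}-\frac{(1-\nu)_+}{\frac n2-1}$ (the correction term $(1-\nu)_+$ reflecting the subtlety near the Wallach set, where for small $\nu$ an extra Sobolev-type gain is available). Since \cite{BBGR} already carries out the reduction to a cone square-function / extension estimate and \cite{BD} supplies the sharp decoupling, I would present this step as a citation with a short sketch of how the two are glued, rather than re-deriving the dyadic machinery in full. \epf
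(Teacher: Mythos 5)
The paper itself offers no proof of Theorem \ref{prop:Bergprojboundedness}: it is quoted as a consequence of the reduction in \cite{BBGR} together with the $l^2$-decoupling theorem of \cite{BD}, which is exactly the high-level route you take, so as a citation your plan is consistent with the paper. Your necessity step is also essentially right for the main term: evaluating $\|K_\nu(\cdot,w)\|_{L^p_\nu}$ with Lemmas \ref{integralV} and \ref{integralcone} (here $r=2$, $d=n-2$) forces $p>\frac{\nu+n-1}{\nu+\frac n2}=p'_\nu$, and duality under the $L^2_\nu$-pairing gives $p<p_\nu$ when $\nu\ge 1$. But note that for $\nu<1$ the term $-\frac{(1-\nu)_+}{\frac n2-1}$ \emph{shrinks} the range rather than reflecting an ``extra gain,'' and its necessity does not follow from kernel tests at all; it requires testing against functions whose Fourier support concentrates on thin neighbourhoods of the boundary of the cone (this is where the sharpness analysis of \cite{BBPR}, \cite{BBGR} lives). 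Your gloss on that correction is backwards, and your necessity argument as written does not reach it.

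The more serious problem is in the sufficiency sketch. You propose to majorize $|K_\nu(z,w)|$ over a Whitney decomposition and conclude by Schur's test that $P_\nu^{+}$ is bounded on $L^p_\nu$ for all $p<p_\nu$, ``hence'' $P_\nu$ is. This cannot work: the positive operator $P_\nu^{+}$ is bounded on $L^p_\nu(T_{\Lambda_n})$ only in the strictly smaller range $q'_\nu<p<q_\nu=2+\frac{\nu}{\frac n2-1}$ (compare Proposition \ref{prop:positiveBergprojboundedness} with $\mu=\nu$), and $q_\nu<p_\nu$. The entire content of Theorem \ref{prop:Bergprojboundedness} beyond the trivial range is that $P_\nu$ remains bounded for $q_\nu\le p<p_\nu$, where any argument based on the modulus of the kernel must fail; the cancellation is exploited on the Fourier-transform side, via the Littlewood--Paley/Whitney decomposition of the \emph{cone of frequencies} and the equivalence (established in \cite{BBGR}) between boundedness of $P_\nu$ and a weighted inequality for the wave extension operator, whose sharp range is what \cite{BD} supplies. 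So the reduction you need is not ``kernel majorization plus decoupling'' but ``Plancherel--P\'olya decomposition plus decoupling''; as stated, your Schur-test step would prove a false statement about $P_\nu^{+}$ and therefore cannot be the bridge to the conclusion.
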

For the other cases we recall the following partial result.
\noindent
\begin{thm} \cite{BBGR}, \cite{BBGRS}. Let $\Omega$ be a symmetric cone of rank $>2.$   
Let $\nu > \frac nr -1.$ Then the Bergman projector $P_\nu$ of $T_{\Omega}$ admits a bounded extension on $L^p_\nu (\Omega)$ if 
$$q'_\nu <p<q_\nu := 2+\frac {\nu}{\frac nr-1 }.$$
\end{thm}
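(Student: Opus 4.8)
The plan is to combine the free $L^2$ estimate with a duality reduction, dispose of a sub-range by a Schur test, and then recover the full interval by a Littlewood--Paley decomposition adapted to $\Omega$ that exploits the cancellation of the Bergman kernel. Write $\delta = \frac nr - 1$ and recall that
$$K_\nu(z,w) = c_\nu\,\Delta^{-(\nu + \frac nr)}\Bigl(\tfrac{z - \overline w}{2i}\Bigr)$$
satisfies $K_\nu(w,z) = \overline{K_\nu(z,w)}$, because $\tfrac{w-\bar z}{2i} = \overline{\bigl(\tfrac{z - \bar w}{2i}\bigr)}$ and $\Delta$ has real coefficients. Hence $P_\nu$ is formally self-adjoint for the pairing $\langle f, g\rangle = \int_{\tub} f\overline g\, dV_\nu$, so $P_\nu$ is bounded on $L^p_\nu(\tub)$ if and only if it is bounded on $L^{p'}_\nu(\tub)$. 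Since the interval $(q'_\nu, q_\nu)$ is invariant under $p\mapsto p'$ and $P_\nu$ is the orthogonal projection on $L^2_\nu(\tub)$, it suffices to establish the bound on a dense subspace for $2 \le p < q_\nu$ and then extend by density and dualize.

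Because $|P_\nu f(z)| \le P_\nu^+|f|(z)$, one might hope to bound the positive operator $P_\nu^+$, whose kernel is $|K_\nu(z,w)|$. I would do this by Schur's test with the power weight $h(w) = \Delta^{-b}(\Im w)$, evaluating the two Schur integrals in two moves: integrating first in the real variable by Lemma \ref{integralV} (with $\mathbf s = (\nu + \frac nr)(1,\dots,1)$, which converges precisely because $\nu>\delta$) reduces $\int_{\mathbb R^n}|K_\nu|\,du$ to a constant times $\Delta^{-\nu}(\Im z + \Im w)$, and integrating then in the cone variable by Lemma \ref{integralcone} reproduces $h$. The convergence conditions of those two lemmas force $\delta < bp,\,bp' < \nu - \delta$, which can be met simultaneously only when $p < \frac\nu\delta$. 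This already yields $P_\nu$ bounded on $L^p_\nu(\tub)$ for $(\frac\nu\delta)' < p < \frac\nu\delta$, but it stops well short of $q_\nu = 2 + \frac\nu\delta$: the positive operator simply cannot reach the full range.

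To cross from $\frac\nu\delta$ to $2 + \frac\nu\delta$ one must use the oscillation of $K_\nu$, not merely $|K_\nu|$, and here I would invoke the Littlewood--Paley machinery of \cite{BBGR} for the cone. Fix a lattice $\{z_k\}$ subordinate to a Whitney covering of $\tub$ by Bergman balls of fixed radius and record the associated sampling/atomic decomposition of $A^p_\nu(\tub)$. Testing $P_\nu$ against this decomposition turns the boundedness on $L^p_\nu(\tub)$ into an estimate for the matrix $a_{k,l} = |\langle K_\nu(\cdot, z_l), \mathbf 1_{B_k}\rangle|$ on a weighted $\ell^p$ space: the near-diagonal block is controlled by the Plancherel/Paley--Wiener bound (Theorem \ref{thm:PWB}), while the off-diagonal decay of $K_\nu$ in the Bergman metric — again quantified through Lemma \ref{integralV} and Lemma \ref{integralcone} — produces a geometric series in the height parameter whose ratio is a power of $2$ with exponent affine in $p$; incorporating the $L^2$ gain on the diagonal shifts the convergence threshold to exactly $p < q_\nu$. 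Interpolating the resulting endpoint estimate with the trivial $L^2_\nu$ bound gives boundedness on all of $2 \le p < q_\nu$, and duality completes the proof.

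The main obstacle is precisely this last step: extracting the factor gained from cancellation. The positive-operator estimate is routine and falls short by the crucial additive constant $2$; the real work is the square-function analysis on $\tub$ that encodes the $L^2$ orthogonality at the level of the Whitney lattice, and it is this mechanism — pushed to its sharp form through $\ell^2$-decoupling in the Lorentz case (Theorem \ref{prop:Bergprojboundedness}, \cite{BBGRS}) — that pins down the threshold $q_\nu$. I expect the bookkeeping of the off-diagonal sums over the orbits of the group $H$ acting simply transitively on $\Omega$, together with the verification that their ratio is governed by exactly $\nu$ and $\delta$, to be the most delicate part.
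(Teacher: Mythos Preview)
This theorem is not proved in the paper: it is quoted from \cite{BBGR} and \cite{BBGRS} and used as a black box, so there is no in-paper argument to compare your proposal against.

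For what it is worth, your outline does track the architecture of those references --- self-adjointness of $P_\nu$ reduces to $p\ge 2$, a Schur test on $P_\nu^+$ with a power weight handles a sub-interval (compare Proposition~\ref{prop:positiveBergprojboundedness}), and the extension to $q_\nu$ requires a cone-adapted Littlewood--Paley/Whitney decomposition that exploits the cancellation of $K_\nu$ rather than just its size. But the decisive step is only described, not executed. The sentence ``incorporating the $L^2$ gain on the diagonal shifts the convergence threshold to exactly $p<q_\nu$'' names the conclusion while supplying none of the mechanism: in \cite{BBGR} this gain is produced by a discrete Hardy-type inequality on $\Omega$ together with sharp off-diagonal estimates for the Whitney pieces of $K_\nu$ (obtained through $\Box$ and the simply transitive action of $H$), and that analysis is the entire content of the theorem. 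Your ``matrix estimate plus geometric series'' picture is too coarse to see why the threshold lands at $2+\nu/\delta$ rather than at the positive-operator endpoint. You correctly flag this as the main obstacle, but the proposal stops precisely where the real work begins.
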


We will sometimes face situations where the weight of the projection differs from the weight associated to the space. We then need the following result (see \cite{sehba}).
\begin{prop}\label{prop:positiveBergprojboundedness}
Let $1\leq p<\infty$, $\nu\in \mathbb{R}$, and $\mu>\frac{n}{r}-1$. Then $P_\mu^+$ is bounded on $L_\nu^p(T_\Omega)$ if and only if $1<p< q_\nu-1$ and $\mu p-\nu>\left(\frac{n}{r}-1\right)\max\{1,p-1\}$.
\end{prop}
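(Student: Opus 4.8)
The final statement to prove is Proposition~\ref{prop:positiveBergprojboundedness}: characterizing when $P_\mu^+$ is bounded on $L_\nu^p(T_\Omega)$ by the conditions $1<p<q_\nu-1$ and $\mu p - \nu > (\frac nr - 1)\max\{1,p-1\}$.

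The plan is to proceed via Schur's test together with the known integration formulas on the cone. First I would write out $P_\mu^+ f(z) = c_\mu\int_{T_\Omega} |\Delta^{-(\mu+\frac nr)}(\frac{z-\bar w}{2i})|\, f(w)\, dV_\mu(w)$ and look for a testing function of the form $h(w) = \Delta^{-\sigma}(\Im w)$ for a suitable real parameter $\sigma$ to be chosen. The heart of the computation is the estimate
\begin{equation*}
\int_{T_\Omega} \left|\Delta^{-(\mu+\frac nr)}\Bigl(\tfrac{z-\bar w}{2i}\Bigr)\right| \Delta^{-\sigma+\mu-\frac nr}(\Im w)\, du\, dv \lesssim \Delta^{-\sigma}(\Im z),
\end{equation*}
which one obtains by first integrating in $u=\Re w$ using Lemma~\ref{integralV} (this needs $\mu + \frac nr - (r-j)\frac d2 - \frac nr > 0$ componentwise, i.e. roughly $\mu > \frac nr - 1$, which is assumed), and then integrating in $v = \Im w$ over the cone using Lemma~\ref{integralcone}, picking up the power $\Delta^{-\sigma}(\Im z)$ and a finite constant precisely when two families of inequalities on $\sigma$ (coming from the two convergence conditions in Lemma~\ref{integralcone}) are simultaneously satisfiable. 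Running Schur's test with the weights $h(w)^{p'}$ on one side and $h(z)^p$ on the other, one finds that such a $\sigma$ exists if and only if $1<p<q_\nu - 1$ and $\mu p - \nu > (\frac nr - 1)\max\{1, p-1\}$; this is a bookkeeping exercise in combining the exponent constraints and matching them against $p' = p/(p-1)$.

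For the necessity direction, I would argue by testing $P_\mu^+$ against the functions $G_w$ (or the normalized Bergman kernels) already introduced in the paper: if $P_\mu^+$ is bounded on $L_\nu^p$, then applying it to $\chi_{B}$ for Bergman balls $B$ of bounded hyperbolic radius, or to the explicit power functions $\Delta^{-t}(\Im w)\chi_{\{\Delta_k(\Im w)<1\}}$, and tracking the resulting growth/integrability forces the stated inequalities. The boundary case $p=1$ fails because $P_\mu^+$ maps into functions that are too large near the boundary of the cone (one can see this directly since $P_\mu^+ 1$ is essentially $\Delta^{-\mu}(\Im z)\Delta^{\frac nr}$-type growth, which is not in $L^1_\nu$ unless the weight compensates, and then $p=1$ is excluded by $p<q_\nu-1$ regardless), and the upper endpoint $p=q_\nu - 1$ fails by the sharpness already recorded in Theorem~\ref{prop:Bergprojboundedness}-type results.

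The main obstacle I anticipate is the careful choice of the Schur exponent $\sigma$ and verifying that the four resulting inequalities (two from the $v$-integration in Lemma~\ref{integralcone}, applied with both $h^{p'}$ and $h^p$, plus the convergence of the $u$-integration) are jointly solvable exactly under the claimed hypotheses — in particular, correctly handling the $\max\{1,p-1\}$ dichotomy, which arises because the binding constraint switches between the $t_j$-condition and the $(s_j - t_j)$-condition in Lemma~\ref{integralcone} depending on whether $p\le 2$ or $p\ge 2$. One must also be slightly careful that the generalized power function $\Delta_{\mathbf s}$ with a non-scalar $\mathbf s$ appears at intermediate stages (from Lemma~\ref{integralV}), so the estimates should really be carried out at the level of $\Delta_{\mathbf s}$ and only collapsed to scalar powers of $\Delta$ at the end; this is routine given Lemmas~\ref{integralV} and~\ref{integralcone} but requires attention to the componentwise inequalities rather than a single scalar condition.
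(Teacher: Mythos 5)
The paper does not prove this proposition at all --- it is quoted verbatim from \cite{sehba} --- so there is no internal proof to compare against; your outline (Schur's test for sufficiency, testing functions for necessity) does match the strategy of that reference. However, there is a genuine gap in the sufficiency half as you have set it up. If you run Schur's test with a \emph{scalar} power $h(w)=\Delta^{-\sigma}(\Im w)$, the two applications of Lemma~\ref{integralcone} (with exponents $\sigma p'$ and $\sigma p$) produce, after eliminating $\sigma$, the conditions $\nu>\left(\frac nr-1\right)p$, \ $\mu p-\nu>\left(\frac nr-1\right)p$ and $\mu>2\left(\frac nr-1\right)$. These are strictly stronger than the stated ones whenever $r\geq 2$: the statement only requires $\nu>\left(\frac nr-1\right)(p-1)$ (that is $p<q_\nu-1$, not $p<q_\nu-2$), $\mu p-\nu>\left(\frac nr-1\right)\max\{1,p-1\}$, and merely $\mu>\frac nr-1$. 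The reason is that in Lemma~\ref{integralcone} the two convergence constraints involve $(j-1)\frac d2$ and $(r-j)\frac d2$ respectively; with a scalar exponent you are forced to satisfy the worst case of each family at the same index, whereas a \emph{vector} test function $\Delta_{\mathbf s}(\Im w)$ lets you choose each component $s_j$ inside its own $j$-dependent window, and these windows are nonempty exactly under the stated hypotheses. So the generalized power functions are not a bookkeeping nuisance appearing ``at intermediate stages'' as you suggest (Lemma~\ref{integralV} applied to the scalar kernel returns a scalar power anyway); choosing the Schur weight itself to be a genuine vector power is the crux of reaching the sharp range, and is precisely what \cite{sehba} does.

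The necessity sketch also needs tightening. Testing on normalized kernels $G_w$ and on box-type functions is the right idea and does yield the conditions, but the specific argument for excluding $p=1$ via $P_\mu^+1$ is not legitimate ($1\notin L^1_\nu$, and in fact $P_\mu^+1\equiv\infty$ on a cone of rank $\geq 2$), and the appeal to ``sharpness recorded in Theorem~\ref{prop:Bergprojboundedness}'' does not apply: that theorem concerns $P_\nu$ on $L^p_\nu$ over the light cone with equal weights, not $P_\mu^+$ on $L^p_\nu$ with $\mu\neq\nu$. The clean route is to test $P_\mu^+$ on the functions $f_{\mathbf t}(w)=\Delta_{\mathbf t}(\Im w)\chi_{B}(w)$ for suitable vector exponents and regions, and to use duality ($(P_\mu^+)^*$ on $L^{p'}$ with the adjoint weight) to obtain the two halves of the $\max\{1,p-1\}$ dichotomy symmetrically.
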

\begin{defn}
The generalized wave operator $\Box$ on
the cone $\Omega$ 
is the differential operator of degree $r$ defined by the equality
$$\Box_x [e^{i(x|\xi)}] = \Delta (\xi)e^{i(x|\xi)} \hskip 2truemm {\rm where} \hskip 2truemm \xi \in \mathbb R^n.$$
When applied to a holomorphic function on $\tub,$ we have $\Box= \Box_z = \Box_x$ where $z=x+iy.$
\end{defn}

We observe with \cite{ BBGNPR, BBGRS, BBPR} the following.
\begin{thm}\label{lem:Boxreverseineq}
Let $1<p<\infty$ and $\nu>\frac{n}{r}-1$.
\begin{enumerate}
\item 
There exists a positive constant $C$ such that for every $F\in A^p_\nu,$
$$\|\Box F\|_{p,\nu+p}\leq C\|F\|_{p,\nu}.$$
\item If moreover $p\geq 2,$ the following two assertions are equivalent.
\begin{itemize}
\item[(i)]
$P_\nu$ is bounded on $L_\nu^p(\tub);$.
\item[(ii)]
For some positive integer $m,$ the differential operator $$\Box^{(m)} := \Box \circ ... \circ \Box \quad (m \hskip 2truemm {\rm  times}) \hskip 2truemm: A_\nu^p \rightarrow A^p_{\nu+mp}$$ is a bounded isomorphism.
\end{itemize}
\end{enumerate}
\end{thm}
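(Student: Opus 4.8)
The final statement is Theorem~\ref{lem:Boxreverseineq}, an ``observation'' attributed to \cite{BBGNPR,BBGRS,BBPR}. Its proof will be a synthesis of those references rather than something self-contained, but here is how I would organize it.

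\textbf{Proof of assertion (1).} The plan is to reduce to the Paley--Wiener picture. Since $\Box_z[e^{i(z|\xi)}]=\Delta(\xi)e^{i(z|\xi)}$, on the Bergman side the operator $\Box$ acts as multiplication by the polynomial $\Delta(\xi)$ on the Fourier transform side. For $p=2$ this gives the inequality immediately with equality constants, via Theorem~\ref{thm:PWB}: $\|\Box F\|_{2,\nu+2}^2=\int_\Omega |\Delta(\xi)|^2|f(\xi)|^2\Delta^{-(\nu+2)}(\xi)\,d\xi=\int_\Omega |f(\xi)|^2\Delta^{-\nu}(\xi)\,d\xi=\|F\|_{2,\nu}^2$. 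For general $1<p<\infty$ one instead uses the integral representation $F=P_\nu F$, differentiates under the integral sign to write $\Box F(z)=\int_{\tub}\Box_z K_\nu(z,w)F(w)\,dV_\nu(w)$, and observes that $\Box_z\Delta^{-(\nu+\frac nr)}(\frac{z-\bar w}{2i})$ is a constant multiple of $\Delta^{-(\nu+1+\frac nr)}(\frac{z-\bar w}{2i})$ times $\Delta(\cdot)$-type factors; more precisely one shows $|\Box_z K_\nu(z,w)|\lesssim |K_{\nu+1}(z,w)|$ up to the weight adjustment, so that $|\Box F(z)|\lesssim P^+_{\nu+1}(|F|)(z)$ against the measure $dV_\nu$. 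Then one invokes boundedness of the relevant positive Bergman-type operator (Proposition~\ref{prop:positiveBergprojboundedness}, or the elementary Schur-test estimates in \cite{BBGNPR}) to conclude $\|\Box F\|_{p,\nu+p}\lesssim \|F\|_{p,\nu}$. The cleanest route is simply to cite \cite[Prop.~3.x]{BBGNPR} where this Schur estimate is carried out; the point to verify is only that the shift in weight from $\nu$ to $\nu+p$ matches the degree-$r$ differentiation, which it does because $\Box$ has ``homogeneity'' one unit of $\Delta$.

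\textbf{Proof of assertion (2), (i)$\Rightarrow$(ii).} Assume $P_\nu$ is bounded on $L^p_\nu$ with $p\geq 2$. By part (1), $\Box^{(m)}:A^p_\nu\to A^p_{\nu+mp}$ is bounded; it remains to produce a bounded inverse. The natural candidate is an integration operator: define, for $G\in A^p_{\nu+mp}$, the function $\mathcal{I}^{(m)}G$ whose Fourier--Laplace symbol is $\Delta(\xi)^{-m}\widehat G(\xi)$, i.e. one writes $G=P_{\nu+mp}G$ and sets $\mathcal I^{(m)}G(z)=c\int_{\tub}\Delta^{-m}\!\big(\tfrac{z-\bar w}{2i}\big)K_{\nu+mp}(z,w)G(w)\,dV_{\nu+mp}(w)$ after suitably absorbing $\Delta$-powers; boundedness $A^p_{\nu+mp}\to A^p_\nu$ again follows from the hypothesis that $P_\nu$ (equivalently, after the weight shift, $P_{\nu+mp}$ — these are equivalent for $p$ in the given range by the standard transference, or one uses Theorem~\ref{prop:Bergprojboundedness}/the rank-$>2$ statement) is bounded, combined with Schur estimates analogous to part (1) run ``backwards''. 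One then checks $\Box^{(m)}\circ\mathcal I^{(m)}=\mathrm{Id}$ and $\mathcal I^{(m)}\circ\Box^{(m)}=\mathrm{Id}$ on the dense subspace where everything is justified by Paley--Wiener (where both reduce to multiplication by $\Delta(\xi)^{\pm m}$), hence on all of the space by density and boundedness. This shows $\Box^{(m)}$ is a bounded isomorphism.

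\textbf{Proof of (2), (ii)$\Rightarrow$(i), and the main obstacle.} Conversely, suppose $\Box^{(m)}:A^p_\nu\to A^p_{\nu+mp}$ is a bounded isomorphism for some $m$. Here one uses the known \emph{unconditional} boundedness of $P_{\nu+mp}$ on $L^p_{\nu+mp}$ for the \emph{shifted} weight: since $\nu+mp$ is large (it eventually exceeds any threshold as $m$ grows, or at least lies in the good range $p'_{\nu+mp}<p<p_{\nu+mp}$ which widens with the weight), Theorem~\ref{prop:Bergprojboundedness} / the rank-$>2$ theorem guarantees $P_{\nu+mp}$ is bounded on $L^p_{\nu+mp}$. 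Writing the identity on $A^p_\nu$ as $(\Box^{(m)})^{-1}\circ \Box^{(m)}$ and expressing $P_\nu$ in terms of $P_{\nu+mp}$ conjugated by $\Box^{(m)}$ and $(\Box^{(m)})^{-1}$ — concretely $P_\nu = (\Box^{(m)})^{-1}\circ P_{\nu+mp}\circ (\text{multiplication-type operator})\circ \Box^{(m)}$ on the appropriate dense domain — transfers the boundedness of $P_{\nu+mp}$ to $P_\nu$. I expect the \emph{main obstacle} to be exactly this transference identity: making precise the commutation between $\Box^{(m)}$, the Bergman projectors at weights $\nu$ and $\nu+mp$, and the embedding $L^p_\nu\hookrightarrow$ distributions, so that $P_\nu$ genuinely factors through $P_{\nu+mp}$; the subtlety is that $\Box^{(m)}$ is only defined \emph{a priori} on holomorphic functions, so one must first extend the relevant operators to $L^p_\nu$ and verify the factorization holds there (this is where \cite{BBGRS,BBPR} do the real work, and I would cite them for the details while indicating the Paley--Wiener heuristic that makes it transparent). \epf
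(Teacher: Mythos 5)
First, a remark on the comparison itself: the paper does not prove this statement. It is introduced with ``We observe with \cite{BBGNPR,BBGRS,BBPR} the following'' and no argument is given, so your proposal can only be measured against the arguments in those references. Your sketch of assertion (1) is essentially the standard one there: reproduce $F=P_\mu F$ for $\mu$ large, use the identity $\Box_z\,\Delta^{-(\mu+\frac nr)}\bigl(\frac{z-\bar w}{2i}\bigr)=c\,\Delta^{-(\mu+1+\frac nr)}\bigl(\frac{z-\bar w}{2i}\bigr)$, and control the resulting positive-kernel operator by a Schur test; the bookkeeping $\|\Box F\|_{p,\nu+p}=\|\Delta(\Im\cdot)\Box F\|_{p,\nu}$ is as you say. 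That part is fine modulo the citations.

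Assertion (2) is where there is a genuine gap. Your argument hinges on the claim that boundedness of $P_\nu$ on $L^p_\nu$ and of $P_{\nu+mp}$ on $L^p_{\nu+mp}$ are ``equivalent \ldots by the standard transference''. This is false, and false in a way that would trivialize the theorem: by Theorem \ref{prop:Bergprojboundedness} the admissible interval $(p'_\mu,p_\mu)$ widens as the weight $\mu$ grows, so $P_{\nu+mp}$ is bounded on $L^p_{\nu+mp}$ for \emph{every} $p\geq 2$ once $m$ is large, whereas $P_\nu$ on $L^p_\nu$ genuinely fails for $p\geq p_\nu$ on the light cone. If your transference were correct, combining it with your (ii)$\Rightarrow$(i) would prove $P_\nu$ bounded on $L^p_\nu$ for all $p\geq 2$, which is known to be false. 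The symptom is that your construction of the inverse never uses hypothesis (i) at the \emph{original} weight $\nu$, even though the whole content of the equivalence is that the isomorphism property of $\Box^{(m)}$ detects boundedness precisely at that weight. The correct construction of the inverse is $G\mapsto c\,P_\nu\bigl(\Delta^m(\Im\cdot)\,G\bigr)$: since $\|\Delta^m(\Im\cdot)G\|_{p,\nu}=\|G\|_{p,\nu+mp}$, hypothesis (i) gives boundedness $A^p_{\nu+mp}\to A^p_\nu$, and the kernel identity above yields $\Box^{(m)}P_\nu\bigl(\Delta^m(\Im\cdot)G\bigr)=c'\,P_{\nu+m}G=c'G$ for $m$ large. (Your explicit operator $\mathcal I^{(m)}$, with the mismatched kernel taken in modulus, does not even converge absolutely, which is another sign that cancellation --- i.e.\ the boundedness of $P_\nu$ rather than $P_\nu^+$ --- must enter.)

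For (ii)$\Rightarrow$(i) the workable route is likewise not to conjugate $P_\nu$ by $\Box^{(m)}$ (which, as you note, is only defined on holomorphic functions), but to bound $\|\Box^{(m)}P_\nu f\|_{p,\nu+mp}$ directly: $\Delta^m(\Im z)\,\Box^{(m)}P_\nu f(z)$ is given by a positive-kernel operator of the type in Proposition \ref{prop:positiveBergprojboundedness}, bounded on $L^p_\nu$ for all $1<p<\infty$ once $m$ is large, and then the lower bound supplied by the isomorphism gives $\|P_\nu f\|_{p,\nu}\leq C\|\Box^{(m)}P_\nu f\|_{p,\nu+mp}\leq C'\|f\|_{p,\nu}$. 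Note that even this requires upgrading ``some $m$'' in (ii) to the large $m$ needed for the Schur test, an additional step your sketch does not address.
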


Let us finish this section by the following result on complex interpolation of Bergman spaces of this setting.
\begin{prop}\label{prop:complexinterp} Let $1\leq p_0<p_1<\infty$, $\nu_0,\nu_1>\frac{n}{r}-1$. Assume that for some $\mu>\frac{n}{r}-1$, the projection $P_\mu$ is bounded on both $L_{\nu_0}^{p_0}(\tub)$ and $L_{\nu_1}^{p_1}(\tub)$. Then for any $\theta\in (0,1)$, the complex interpolation space $[A_{\nu_0}^{p_0},A_{\nu_1}^{p_1}]_\theta$ coincides with $A_{\nu}^{p}$ with equivalent norms, where $\frac{1}{p}=\frac{1-\theta}{p_0}+\frac{\theta}{p_1}$ and $\frac{\nu}{p}=\frac{1-\theta}{p_0}\nu_0+\frac{\theta}{p_1}\nu_1$.
\end{prop}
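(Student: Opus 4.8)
\textbf{Proof proposal for Proposition \ref{prop:complexinterp}.}

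The plan is to realize each $A^{p_j}_{\nu_j}(\tub)$ as a complemented subspace of $L^{p_j}_{\nu_j}(\tub)$ via one and the same bounded projection, namely $P_\mu$, and then invoke the standard ``retract'' principle of complex interpolation together with the Stein--Weiss/Calder\'on interpolation of weighted $L^p$ spaces. First I would record the couple structure: by Lemma \ref{lem:pointwiseestimberg} the spaces $A^{p_0}_{\nu_0}$ and $A^{p_1}_{\nu_1}$ embed continuously into the space of holomorphic functions on $\tub$ with the compact-open topology, hence they form a compatible Banach couple, and the inclusions $\iota_j\colon A^{p_j}_{\nu_j}\hookrightarrow L^{p_j}_{\nu_j}$ are morphisms of the couples $(A^{p_0}_{\nu_0},A^{p_1}_{\nu_1})$ and $(L^{p_0}_{\nu_0},L^{p_1}_{\nu_1})$. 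Next, for $f\in A^{p_j}_{\nu_j}$ the reproducing identity $P_\mu f=f$ holds: the integral defining $P_\mu f$ converges absolutely because of the pointwise bound in Lemma \ref{lem:pointwiseestimberg}, and the identity is obtained by extending from a dense subclass using the boundedness of $P_\mu$ on $L^{p_j}_{\nu_j}$ (this is by now classical in this setting, cf. \cite{BBGNPR,sehba}). Thus $P_\mu\circ\iota_j=\mathrm{id}$ on $A^{p_j}_{\nu_j}$, so $A^{p_j}_{\nu_j}$ is a retract of $L^{p_j}_{\nu_j}$, realized by the pair $(\iota_j,P_\mu)$.

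Then I would interpolate the ambient couple. Writing $L^{p_j}_{\nu_j}=L^{p_j}(\tub,\Delta^{\nu_j-\frac nr}(y)\,dx\,dy)$, the Calder\'on interpolation theorem for weighted Lebesgue spaces gives $[L^{p_0}_{\nu_0},L^{p_1}_{\nu_1}]_\theta=L^{p}_{\nu}$ with equivalent norms, where $\frac1p=\frac{1-\theta}{p_0}+\frac{\theta}{p_1}$ and the weight exponent is $\frac{\nu-\frac nr}{p}=\frac{1-\theta}{p_0}\bigl(\nu_0-\tfrac nr\bigr)+\frac{\theta}{p_1}\bigl(\nu_1-\tfrac nr\bigr)$, which upon using the relation for $1/p$ simplifies exactly to $\frac{\nu}{p}=\frac{1-\theta}{p_0}\nu_0+\frac{\theta}{p_1}\nu_1$. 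Moreover, since $P_\mu$ is bounded on both $L^{p_0}_{\nu_0}$ and $L^{p_1}_{\nu_1}$, the interpolation theorem shows $P_\mu$ is bounded on $[L^{p_0}_{\nu_0},L^{p_1}_{\nu_1}]_\theta=L^{p}_{\nu}$; being a bounded projection there whose range consists of holomorphic functions and which fixes every element of the closed subspace $A^{p}_{\nu}$, its image is precisely $P_\mu(L^{p}_{\nu})=A^{p}_{\nu}$.

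Finally I would apply the retract lemma for complex interpolation (e.g. Bergh--L\"ofstr\"om, Theorem 6.4.2): if $\bar B$ is a retract of $\bar A$ via morphisms $J,P$ with $P\circ J=\mathrm{id}$, then $[B_0,B_1]_\theta=P\bigl([A_0,A_1]_\theta\bigr)$ with equivalent norms. With $\bar A=(L^{p_0}_{\nu_0},L^{p_1}_{\nu_1})$, $\bar B=(A^{p_0}_{\nu_0},A^{p_1}_{\nu_1})$, $J=\iota$, $P=P_\mu$, this yields $[A^{p_0}_{\nu_0},A^{p_1}_{\nu_1}]_\theta=P_\mu(L^{p}_{\nu})=A^{p}_{\nu}$ with equivalent norms, which is the claim. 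The main point requiring care is the identity $P_\mu f=f$ for $f\in A^{p_j}_{\nu_j}$ when $\mu\neq\nu_j$, i.e.\ that the Bergman projection with weight $\mu$ still acts as the identity on the holomorphic subspace defined with weight $\nu_j$; this is where the pointwise estimate of Lemma \ref{lem:pointwiseestimberg} is essential, and it is the one place where the hypotheses ($\mu>\frac nr-1$ and boundedness of $P_\mu$) are genuinely used beyond the formal interpolation bookkeeping. \epf
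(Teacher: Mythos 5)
Your retract argument is correct and is essentially the route taken in the source the paper relies on: the paper itself offers no proof of this proposition beyond the instruction ``Consult e.g.\ \cite{BGN}'', and \cite{BGN} establishes the result precisely as you do, by exhibiting $A^{p_j}_{\nu_j}$ as a retract of $L^{p_j}_{\nu_j}$ via a single bounded Bergman projection $P_\mu$, identifying $[L^{p_0}_{\nu_0},L^{p_1}_{\nu_1}]_\theta=L^{p}_{\nu}$ by Stein--Weiss/Calder\'on interpolation of weighted Lebesgue spaces, and applying the retract lemma. The one genuinely delicate point --- the reproducing identity $P_\mu f=f$ on $A^{p_j}_{\nu_j}$ when $\mu\neq\nu_j$, which requires a density argument for suitable holomorphic functions in $A^{p_j}_{\nu_j}$ --- is exactly the step you flag and defer to \cite{BBGNPR,sehba}, which matches the level of detail the paper itself adopts.
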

\begin{proof}
Consult e.g. \cite{BGN}.
\end{proof}
\section{Proof of the Blasco Theorem.}
\subsection{Proof of Theorem 1.1}
By Lemma \ref{integralV}, the function
$$G (z) = G_w (z) := \left (\Delta_1^{-\frac {\nu_1 - \nu_2}q}...\Delta_{r-1}^{-\frac {\nu_{r-1} - \nu_r}q}\Delta_r^{-\frac {\nu_r + \frac nr}q}\right )\left (\frac {z-\bar w}{2i}\right )$$
with $w=u+iv \in T_\Omega,$ belongs to $H^p (\tub)$ if and only if $$(\nu_j +\frac nr) \frac pq > (r-j)\frac d2 + \frac nr \hskip 2truemm (j=1,...,r).$$  Moreover 
$$||G||_{H^p (\tub)} = C_{p, q} \left (\Delta_1^{-\frac {\nu_1 - \nu_2}q}...\Delta_{r-1}^{-\frac {\nu_{r-1} - \nu_r}q}\Delta_r^{-\frac {{\nu_r + \frac nr}}q + \frac n{rp}}\right ) (v).$$
So for these $\nu,$ a necessary condition for the continuous embedding $H^p (\tub) \hookrightarrow L^q (\tub, d\mu)$ to hold is the existence of a positive constant $C_{p, q, \mu}$ such that for every $w=u+iv \in \tub$,
\begin{equation}\label{nec2}
L(w) 
\leq C_{p, q, \mu} \left (\Delta_1^{-(\nu_1 - \nu_2)}...\Delta_{r-1}^{-(\nu_{r-1} - \nu_r)}\Delta_r^{-\nu_r - \frac nr + \frac {nq}{rp}}\right ) (v)
\end{equation}
where $$L(w):=\int_{\tub} \left \vert \left (\Delta_1^{-(\nu_1 - \nu_2)}...\Delta_{r-1}^{-(\nu_{r-1} - \nu_r)}\Delta_r^{-\nu_r - \frac nr }\right )\left (\frac {z-\bar w}{2i}\right )\right \vert d\mu (z).$$
We state Theorem 1.1 in the following more general form.

\begin{thm}
 Let $\mu$ be a Borel measure on $T_\Omega.$ If $p, q$ are real numbers and $\nu = (\nu_1,...,\nu_r)$ is a a vector of $\mathbb R^r$ satisfying the
conditions
\begin{enumerate}
\item[(i)] $0 < p < q, \hskip 2truemm \frac qp > 2 - \frac rn,$
\item[(ii)] $(\nu_j +\frac nr) \frac pq > (r-j)\frac d2 + \frac nr \hskip 2truemm (j=1,...,r),$
\end{enumerate}
then
\begin{enumerate}
\item  $H^p (T_\Omega)$ continuously embeds in $A^q_{\frac nr (\frac qp -1)} (T_\Omega)$\\
if and only if
\item
the condition
(\ref{nec2})
implies that $H^p (T_\Omega)$ continuously embeds in $L^q (T_\Omega, d\mu).$
\end{enumerate}

\end{thm}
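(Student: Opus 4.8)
The plan is to establish the two implications of the equivalence $1.\Longleftrightarrow 2.$ separately; both rest on the numerical identity $\beta+\frac nr=\frac{nq}{rp}$, where $\beta:=\frac nr\!\left(\frac qp-1\right)$ is the weight occurring in assertion $1.$ Note first that the first half of $(i)$ forces $\beta>\frac nr-1$, so $A^q_\beta(\tub)$ is nontrivial and, on holomorphic functions, $\|\cdot\|_{L^q(\tub,\,dV_\beta)}$ coincides with the $A^q_\beta$--norm.

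For $2.\Rightarrow 1.$ I would simply apply assertion $2.$ to the single measure $d\mu_0(z):=\Delta^{\frac nr(\frac qp-2)}(y)\,dx\,dy=dV_\beta(z)$, once one has checked that $\mu_0$ verifies (\ref{nec2}). To do so, write the integrand of $L(w)$ as $|\Delta_{-\mathbf s}(\frac{z-\bar w}{2i})|$ with $\mathbf s=(\nu_1+\frac nr,\dots,\nu_r+\frac nr)$; integrate first in $x\in\bR^n$ by Lemma \ref{integralV} (whose hypothesis $\Re s_j>(r-j)\frac d2+\frac nr$ amounts to $\nu_j>(r-j)\frac d2$, which follows from $(ii)$ since $\frac qp>1$), and then integrate the resulting power of $y+v$ over $\Omega$ against $\Delta^{\beta-\frac nr}(y)$ by Lemma \ref{integralcone}. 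A short verification shows that the two convergence conditions of Lemma \ref{integralcone} are exactly the inequalities $\frac qp>1$ and $\frac qp>2-\frac rn$ of $(i)$ and the inequalities $(ii)$; carrying out the computation returns
$$L(w)=C\,\Delta_1^{-(\nu_1-\nu_2)}(v)\cdots\Delta_{r-1}^{-(\nu_{r-1}-\nu_r)}(v)\,\Delta_r^{-\nu_r-\frac nr+\frac{nq}{rp}}(v),$$
i.e. the right-hand side of (\ref{nec2}). Hence assertion $2.$ yields $H^p(\tub)\hookrightarrow L^q(\tub,d\mu_0)=A^q_\beta(\tub)$, which is assertion $1.$

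For $1.\Rightarrow 2.$ --- the substantial direction --- let $\mu$ be a Borel measure satisfying (\ref{nec2}); I would argue by a Whitney--type decomposition. Step one extracts from (\ref{nec2}) the geometric Carleson estimate $\mu(B_\delta(w))\le C\,\Delta^{\frac{nq}{rp}}(v)$ for every $w=u+iv$ and a fixed radius $\delta$: restricting the integral in (\ref{nec2}) to a Bergman ball $B_\delta(w)$ and using that $|\Delta_k(\frac{z-\bar w}{2i})|\sim\Delta_k(v)$ on $B_\delta(w)$ (because $\frac{w-\bar w}{2i}=v$ and the minors $\Delta_k$ are comparable on Bergman balls), the factors $\Delta_1,\dots,\Delta_{r-1}$ cancel against the right-hand side and only $\Delta_r^{\frac{nq}{rp}}=\Delta^{\frac{nq}{rp}}$ survives. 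Step two fixes a $\delta$--lattice $\{z_j\}_j$ in $\tub$, i.e. a sequence with $\bigcup_j B_\delta(z_j)=\tub$ and bounded overlap $\sum_j\chi_{B_{2\delta}(z_j)}\le N$ (such lattices exist in this setting, cf. \cite{BBGNPR}). For $F\in H^p(\tub)$ --- which lies in $A^q_\beta(\tub)$ by assertion $1.$ --- plurisubharmonicity of $|F|^q$ together with the near-constancy of $\Delta^{\beta-\frac nr}(y)$ on each $B_{2\delta}(z_j)$ gives the sub-mean value bound $\sup_{B_\delta(z_j)}|F|^q\le C\,V_\beta(B_{2\delta}(z_j))^{-1}\!\int_{B_{2\delta}(z_j)}|F|^q\,dV_\beta$, where $V_\beta(B_{2\delta}(z_j))\sim\Delta^{\beta+\frac nr}(v_j)=\Delta^{\frac{nq}{rp}}(v_j)$.

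Combining the two steps and using the bounded overlap,
\begin{align*}
\int_{\tub}|F|^q\,d\mu &\le\sum_j\mu(B_\delta(z_j))\,\sup_{B_\delta(z_j)}|F|^q\le C\sum_j\frac{\Delta^{\frac{nq}{rp}}(v_j)}{\Delta^{\frac{nq}{rp}}(v_j)}\int_{B_{2\delta}(z_j)}|F|^q\,dV_\beta\\
&\le CN\,\|F\|_{q,\beta}^q\le C'\,\|F\|_{H^p}^q,
\end{align*}
the last inequality being assertion $1.$; this is the required embedding $H^p(\tub)\hookrightarrow L^q(\tub,d\mu)$. The main obstacle --- and the real content of the theorem --- is the exact matching of exponents in this second implication: the power $\frac{nq}{rp}$ produced by (\ref{nec2}) in the Carleson estimate must coincide with $\beta+\frac nr$, the $\beta$--volume exponent of a Bergman ball, which is precisely what pins down $\beta=\frac nr(\frac qp-1)$. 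Everything else (existence of $\delta$--lattices, comparability of the $\Delta_k$ and of the volumes of Bergman balls on those balls, the sub-mean value inequality, homogeneity of the generalized power functions, absolute convergence of the integrals under $(i)$--$(ii)$) is standard geometry of $\tub$ and of the generalized power functions, available in the references of Section 2.
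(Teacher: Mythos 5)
Your proof is correct, and the easy direction ($2.\Rightarrow 1.$) is essentially identical to the paper's: both verify that $dV_{\beta}$ with $\beta=\frac nr(\frac qp-1)$ satisfies (\ref{nec2}) by integrating first in $x$ via Lemma \ref{integralV} and then over $\Omega$ via Lemma \ref{integralcone}. For the substantial direction ($1.\Rightarrow 2.$), however, you take a genuinely different route. The paper argues ``continuously'': it proves a pointwise sub-mean-value inequality that dominates $|F(z)|^q$ by an integral of $|F|^q\,dV_\nu$ against the kernel $|\Delta_{\nu+\frac nr}(\frac{z-\bar w}{2i})|^{-1}$, and then a single application of Fubini--Tonelli makes the full integral $L(w)$ of (\ref{nec2}) appear as an inner integral, so that (\ref{nec2}) converts $\int|F|^q\,d\mu$ directly into $C\|F\|^q_{q,\beta}$. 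You argue ``discretely'': you first downgrade (\ref{nec2}) to the Bergman-ball Carleson condition $\mu(B_\delta(w))\lesssim\Delta^{\frac{nq}{rp}}(\Im w)$ and then run a $\delta$-lattice/bounded-overlap argument with the local sub-mean-value inequality. Both are standard Carleson-embedding mechanisms and both close correctly here because $\beta+\frac nr=\frac{nq}{rp}$. Your version has the small bonus of showing that the formally weaker ball condition already suffices for the embedding, at the cost of invoking the existence of $\delta$-lattices and volume estimates for Bergman balls; the paper's version uses the integrated condition (\ref{nec2}) as a black box and needs only Fubini. Two cosmetic points: it is the \emph{second} inequality in (i), $\frac qp>2-\frac rn$, not the first, that guarantees $\beta>\frac nr-1$ (the first only gives $\beta>0$); and the convergence conditions of Lemma \ref{integralcone} are \emph{implied by}, rather than ``exactly'', the hypotheses (i)--(ii), since (ii) carries the extra factor $\frac qp>1$ on the term $(r-j)\frac d2$. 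Neither affects the validity of the argument.
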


\begin{proof}
We first show the implication $2. \Rightarrow 1.$ We must prove that the measure $d\mu (x+iy) = \Delta^{\frac nr (\frac qp - 2)}(y)dxdy$ satisfies the estimate $(\ref{nec2}),$ i.e.
\begin{eqnarray*}
L &:=&\int_{\tub} |(\Delta_1^{-(\nu_1 - \nu_2)}...\Delta_{r-1}^{-(\nu_{r-1} - \nu_r)\Delta_r^{-\nu_r - \frac nr )}})(\frac {z-\bar w}{2i})|\Delta^{\frac nr (\frac qp - 2)}dxdy\\ &\leq& C_{p, q, \mu} (\Delta_1^{-(\nu_1 - \nu_2)}...\Delta_{r-1}^{-(\nu_{r-1} - \nu_r)}\Delta_r^{-\nu_r - \frac nr + \frac {nq}{rp}}) (v).
\end{eqnarray*}

By Lemma \ref{integralV}, we have
\begin{eqnarray*} && \int_{\mathbb R^n} |(\Delta_1^{-(\nu_1 - \nu_2)}...\Delta_{r-1}^{-(\nu_{r-1} - \nu_r)}\Delta_r^{-\nu_r - \frac nr })(\frac {x+iy-\bar w}{2i})|dx\\ &=& C_\nu \Delta_1^{-(\nu_1 - \nu_2)}...\Delta_{r-1}^{-(\nu_{r-1} - \nu_r)}\Delta_r^{-\nu_r} (y+v)
\end{eqnarray*}
whenever $\nu_j  > (r-j)\frac d2   \hskip 2truemm (j=1,...,r).$  Moreover by Lemma \ref{integralcone}, we have
\begin{eqnarray*} &&\int_\Omega (\Delta_1^{-(\nu_1 - \nu_2)}...\Delta_{r-1}^{-(\nu_{r-1} - \nu_r)}\Delta_r^{-\nu_r} (y+v))\Delta^{\frac nr (\frac qp - 2)}dy\\ &=& (\Delta_1^{-(\nu_1 - \nu_2)}...\Delta_{r-1}^{-(\nu_{r-1} - \nu_r)}\Delta_r^{-\nu_r + \frac nr (\frac qp - 1)})(v) 
\end{eqnarray*}
whenever $\frac qp > 2-\frac rn$ and $\nu_j > \frac nr (\frac qp - 1) + (r-j)\frac d2 \quad (j=1,...,r).$\\
We next show the implication $1. \Rightarrow 2.$ We shall use the following lemma.

\begin{lem}
Let $q>0$ and let $\nu = (\nu_1,...,\nu_r) \in \mathbb R^r.$ There exists a positive constant $C_{q, \nu}$ such that for every $F\in \mathcal Hol (\tub)$ we have
$$|F(z)|^q \leq C_{q, \nu}\int_{\tub} \frac {|F(u+iv)|^q \left (\Delta_1^{\nu_1 - \nu_2}...\Delta_{r-1}^{\nu_{r-1} - \nu_r}\Delta_r^{\nu_r -\frac nr}\right ) (v)}{\left \vert \left (\Delta_1^{\nu_1 - \nu_2}...\Delta_{r-1}^{\nu_{r-1} - \nu_r}\Delta_r^{\nu_r+\frac nr}\right ) \left (\frac {x+iy-\bar w}{2i}\right )\right \vert}dudv.$$ 
\end{lem}

\begin{proof}[Proof of the lemma]
We denote $B(\zeta, \rho)$ the Bergman ball with centre $\zeta$ and radius $\rho.$ Since $|F|^q$ is plurisubharmonic, we have
$$|F(ie)|^q \leq C\int_{B(ie, 1)} |F(u+iv)|^q \frac {dudv}{\Delta^{\frac {2n}r} (v)}.$$
Recall that  $\frac {dudv}{\Delta^{\frac {2n}r}  (v)}$ is the invariant measure on $\tub.$ Let $z\in \tub$ and let $g$  be an affine automorphism of $\tub$ such that $g(ie) = z.$ We have
\begin{eqnarray*} |F(z)|^q &=& |(F\circ g)(ie)|^q\\ &\leq& C\int_{B(ie, 1)} |(F\circ g)(u+iv)|^q \frac {dudv}{\Delta^{\frac {2n}r} (v)}\\ &=& C\int_{B(z, 1)} |F(u+iv)|^q \frac {dudv}{\Delta^{\frac {2n}r} (v)}.
\end{eqnarray*}
We recall that $|\Delta_j (\frac {z-\bar w}{2i})| \simeq \Delta_j (v)$ for all $w=u+iv \in B(z, 1).$ This implies that 
$$
\begin{array}{clcr}
|F(z)|^q &\leq C_{q, \nu}\int_{B(z, 1)} \frac {\left \vert F(u+iv)\right \vert^q \left (\Delta_1^{\nu_1 - \nu_2}...\Delta_{r-1}^{\nu_{r-1} - \nu_r}\Delta_r^{\nu_r -\frac nr}\right )(v)}{\left \vert \left (\Delta_1^{\nu_1 - \nu_2}...\Delta_{r-1}^{\nu_{r-1} - \nu_r}\Delta_r^{\nu_r+\frac nr}\right ) \left (\frac {x+iy-\bar w}{2i}\right )\right \vert}dudv\\ 
&\leq C_{q, \nu}\int_{\tub} \frac {\left \vert F(u+iv)\right \vert^q \left (\Delta_1^{\nu_1 - \nu_2}...\Delta_{r-1}^{\nu_{r-1} - \nu_r}\Delta_r^{\nu_r -\frac nr}\right )(v)}{\left \vert \left (\Delta_1^{\nu_1 - \nu_2}...\Delta_{r-1}^{\nu_{r-1} - \nu_r}\Delta_r^{\nu_r+\frac nr}\right ) \left (\frac {x+iy-\bar w}{2i}\right )\right \vert}dudv. 
\end{array}
$$
\end{proof}
Let us set 
$$I(w):=\int_{\tub} \frac {d\mu (z)}{|(\Delta_1^{\nu_1 - \nu_2}...\Delta_{r-1}^{\nu_{r-1} - \nu_r}\Delta_r^{\nu_r+\frac nr}) (\frac {x+iy-\bar w}{2i})|}$$
and recall that for $\nu=(\nu_1,\cdots,\nu_r)\in \mathbb{R}^r$, $$\Delta_{\nu-\frac{n}{r}}(v)=(\Delta_1^{\nu_1 - \nu_2}...\Delta_{r-1}^{\nu_{r-1} - \nu_r}\Delta_r^{\nu_r -\frac nr})(v).$$
Using the Fubini-Tonelli Theorem, it follows from the previous lemma and the condition (\ref{nec2}) that
\begin{eqnarray*}
\int_{\tub} |F (z)|^q d\mu (z) 
&\leq& C_{q, \nu}
\int_{\tub}I(u+iv)|F_ (u+iv)|^q \Delta_{\nu-\frac{n}{r}}(v)dudv\\
&\leq& C_{p, q, \mu}\int_{\tub} \Delta^{\frac nr (\frac qp -2)} (v)|F(u+iv)|^q dudv.
\end{eqnarray*}
An application of the assertion $1.$ of the theorem implies that
$$\int_{\tub} |F (z)|^q d\mu (z) \leq C_{p, q, \nu}||F||^q_{H^p}.$$
This finishes the proof of the implication $(ii) \Rightarrow (i).$

\end{proof}

\section{Proofs of the Hardy-Littlewood Theorems.}
\subsection{Proof of Theorem \ref{main1}.}
In view of Lemma \ref{lem:bergbergembed}, it is sufficient to show the following result. 
\begin{thm} We have that
$$H^2(T_\Omega)\hookrightarrow
A^{4}(T_\Omega).$$\end{thm}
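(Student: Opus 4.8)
The plan is to prove the equivalent statement that $H^2(\tub)$ embeds continuously into the \emph{unweighted} Bergman space $A^4(\tub)=A^4_{n/r}(\tub)$; that is, to produce a constant $C$ with $\int_{\tub}|F(z)|^4\,dx\,dy\le C\|F\|_{H^2}^4$ for every $F\in H^2(\tub)$. The engine is the Paley--Wiener representation of Theorem~\ref{thm:PWH}: write $F(x+iy)=(2\pi)^{-n/2}\int_\Omega e^{i(x|\xi)}e^{-(y|\xi)}f(\xi)\,d\xi$ with $f\in L^2(\Omega)$ and $\|f\|_{L^2(\Omega)}=\|F\|_{H^2}$, and observe $|F|^4=|F^2|^2$. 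Since $F^2$ is again of Paley--Wiener type, its spectral density is (up to a constant) the convolution $f*f$, which is supported in $\Omega$; explicitly $F^2(x+iy)=(2\pi)^{-n/2}\int_\Omega e^{i(x|\zeta)}e^{-(y|\zeta)}g(\zeta)\,d\zeta$ with $g(\zeta)=C\int_{\Omega\cap(\zeta-\Omega)}f(\eta)f(\zeta-\eta)\,d\eta$. Plancherel in the $x$--variable, for each fixed $y\in\Omega$, then gives $\int_{\bR^n}|F^2(x+iy)|^2\,dx=C\int_\Omega e^{-2(y|\zeta)}|g(\zeta)|^2\,d\zeta$.

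Next I would integrate in $y$ over $\Omega$ and apply Lemma~\ref{1.2}: taking $s=\frac nr$ there (legitimate since $\frac nr>\frac nr-1$), and using self-duality of $\Omega$ together with the homogeneity $\Delta(t\zeta)=t^r\Delta(\zeta)$, one gets $\int_\Omega e^{-2(y|\zeta)}\,dy=C\,\Delta^{-n/r}(\zeta)$, hence $\int_{\tub}|F(z)|^4\,dx\,dy=C\int_\Omega|g(\zeta)|^2\,\Delta^{-n/r}(\zeta)\,d\zeta$. To estimate $g(\zeta)$ I would apply the Cauchy--Schwarz inequality to the defining integral, producing the factor $|\Omega\cap(\zeta-\Omega)|$, and evaluate this volume by Lemma~\ref{1.1'} with $p=q=\frac nr$, which gives $|\Omega\cap(\zeta-\Omega)|=C\,\Delta^{n/r}(\zeta)$. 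This results in
$$|g(\zeta)|^2\le C\,\Delta^{n/r}(\zeta)\int_{\Omega\cap(\zeta-\Omega)}|f(\eta)|^2|f(\zeta-\eta)|^2\,d\eta.$$
The decisive point is that the weight $\Delta^{n/r}(\zeta)$ now cancels $\Delta^{-n/r}(\zeta)$ exactly; the Fubini--Tonelli theorem, with the change of variable $\zeta\mapsto\zeta-\eta$, then collapses the remaining double integral and yields $\int_{\tub}|F(z)|^4\,dx\,dy\le C\,\|f\|_{L^2(\Omega)}^4=C\,\|F\|_{H^2}^4$.

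There is no serious obstruction here: the argument is essentially an exact computation, and the cancellation of the two powers of $\Delta(\zeta)$ is precisely what singles out the exponent $4$ paired with $H^2$. The only points requiring care are technical bookkeeping ones: justifying the Plancherel identity and the interchange of integrations (cleanest by first assuming $f$ belongs to a dense subspace, say continuous with compact support in $\Omega$, so that $g$ is continuous and compactly supported and every integral in sight converges absolutely, and then passing to the general case by Fatou's lemma using the pointwise estimate of Lemma~\ref{lem:pointwiseestimberg}); tracking the normalizing constants coming from the $(2\pi)^{n/2}$ factors and from the generalized gamma and beta functions $\Gamma_\Omega$ and $B_\Omega$; and recording the elementary scaling identity $\Delta(t\zeta)=t^r\Delta(\zeta)$ used in computing $\int_\Omega e^{-2(y|\zeta)}\,dy$.
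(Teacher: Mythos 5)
Your proposal is correct and is essentially the paper's own argument: Paley--Wiener representation of $F$, identification of the spectral density of $F^2$ as the convolution $g$ supported in $\Omega$, Cauchy--Schwarz combined with the volume identity $|\Omega\cap(\zeta-\Omega)|=B_\Omega(\tfrac nr,\tfrac nr)\Delta^{n/r}(\zeta)$ from Lemma~\ref{1.1'}, exact cancellation of the weights, and Fubini. The only cosmetic difference is that you unpack the Plancherel-in-$x$ and integration-in-$y$ computation by hand, whereas the paper cites the Paley--Wiener characterization of $A^2_{\nu}(T_\Omega)$ (Theorem~\ref{thm:PWB}) to reduce directly to showing $g\in L^2_{-n/r}(\Omega)$.
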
 
\begin{proof} Given
$F$ in $H^2(\tub)$, we would like to show that $F^2$
belongs to $A^{2}(\tub)$. By
Theorem \ref{thm:PWH} there exists $f\in L^2(\Omega)$ such that
$$F(z)=\int_{\Omega}e^{i(z|\xi)}f(\xi)d\xi \quad \quad
(z\in T_\Omega).$$ It follows that \begin{eqnarray*} F^2 (z) &=&
\int_{\Omega\times
\Omega}e^{i(z|\xi+t)}f(\xi)f(t)d\xi dt \\
 &=& \int_{\Omega}\int_{\Omega\cap
(u-\Omega)}e^{i(z|u)}f(u-\xi)f(\xi)d\xi du \\
 &=& \int_{\Omega}e^{i(z|u)}g(u)du,\end{eqnarray*}
where $$g(u)=\int_{\Omega\cap
(u-\Omega)}f(u-\xi)f(\xi)d\xi.$$
It follows from Theorem \ref{thm:PWB} that to conclude, we only have to
show that  $g\in L_{-\frac{n}{r}}^2(\Omega)$.

We first estimate $|g(u)|^2$. Using H\"older's inequality and Lemma \ref{1.1'}, we obtain
\begin{eqnarray*} |g(u)|^2 &\le& \left(\int_{\Omega\cap
(u-\Omega)}|f(u-\xi)||f(\xi)|d\xi\right)^2\\
&\le& \left(\int_{\Omega\cap
(u-\Omega)}|f(u-\xi)|^2|f(\xi)|^2d\xi\right)\times
\left(\int_{\Omega\cap
(u-\Omega)}d\xi\right)\\ &=&
C\Delta^{\frac{n}{r}}(u)\left(\int_{\Omega\cap
(u-\Omega)}|f(u-\xi)|^2|f(\xi)|^2d\xi\right).\end{eqnarray*}
More precisely we have $C=B\left (\frac nr, \frac nr \right )$ ($B$ is the beta function). It follows easily that
\begin{eqnarray*}\int_{\Omega}\Delta^{-\frac{n}{r}}(u)|g(u)|^2du &\le& C\int_{\Omega}\int_{\Omega\cap
(u-\Omega)}|f(u-\xi)|^2|f(\xi)|^2d\xi
du\\ &=& C||f||_{L^2(\Omega)}^4=C||F||_{H^2}^4.\end{eqnarray*} The proof is complete. 
\end{proof}

\subsection{Proof of Theorem \ref{main2}.}
With an application of assertion $1.$ of Theorem \ref{lem:Boxreverseineq}, we deduce the following that will be useful in the proof of Theorem \ref{main2}.
\begin{cor}\label{cor:BoxH2A4}
There exists a constant $C>0$ such that for any $F\in H^2(T_\Omega)$,
\begin{equation}\label{eq:BoxH2A4}
\left(\int_{\tub}|\Delta(\Im z)(\Box F)(z)|^4dV(z)\right)^{1/4}\leq C\|F\|_{H^2(T_\Omega)}.
\end{equation}
\end{cor}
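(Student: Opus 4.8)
The plan is to combine Theorem 4.1 (which gives $H^2(T_\Omega)\hookrightarrow A^4(T_\Omega)$, i.e. $F\in H^2\Rightarrow F\in A^4_{n/r}$) with assertion (1) of Theorem \ref{lem:Boxreverseineq} applied with $p=4$ and a suitable weight $\nu$. Indeed, the measure $dV(z)=\Delta^{\frac nr-\frac nr}(\Im z)\,dx\,dy$ that appears on the left side of \eqref{eq:BoxH2A4}, once we account for the factor $\Delta(\Im z)^4$, corresponds exactly to the weighted Bergman norm $\|\Box F\|_{4,\nu+4}$ with the choice $\nu+4=\frac nr+4$, i.e. $\nu=\frac nr$. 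So the statement to prove is precisely
$$\|\Box F\|_{4,\frac nr+4}\leq C\|F\|_{H^2(T_\Omega)},\qquad F\in H^2(T_\Omega).$$

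First I would invoke Theorem 4.1: since $4\leq 4<\infty$, we have $H^2(T_\Omega)\hookrightarrow A^4(T_\Omega)=A^4_{\frac nr}(T_\Omega)$, hence there is a constant $C>0$ with $\|F\|_{4,\frac nr}\leq C\|F\|_{H^2}$ for all $F\in H^2(T_\Omega)$. Next I would apply assertion (1) of Theorem \ref{lem:Boxreverseineq} with $p=4$ and $\nu=\frac nr$ (note $\nu=\frac nr>\frac nr-1$ and $1<4<\infty$, so the hypotheses are met): this yields a constant $C>0$ such that
$$\|\Box F\|_{4,\frac nr+4}\leq C\|F\|_{4,\frac nr}$$
for every $F\in A^4_{\frac nr}(T_\Omega)$. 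Chaining the two inequalities gives $\|\Box F\|_{4,\frac nr+4}\leq C\|F\|_{H^2}$, and unwinding the definition of $\|\cdot\|_{4,\frac nr+4}$,
$$\left(\int_{\tub}|(\Box F)(z)|^4\,\Delta^{(\frac nr+4)-\frac nr}(\Im z)\,dx\,dy\right)^{1/4}=\left(\int_{\tub}|\Delta(\Im z)(\Box F)(z)|^4\,dV(z)\right)^{1/4},$$
which is exactly \eqref{eq:BoxH2A4}.

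There is essentially no obstacle here; the whole point of the corollary is bookkeeping, so the only thing to be careful about is matching the weight exponents: the invariant-looking measure $dV(z)$ is $dV_{n/r}$, and the extra $\Delta(\Im z)^4$ inside absorbs the shift $\nu\mapsto\nu+p=\frac nr+4$ coming from the $\Box$-estimate. One should also record that $F\in H^2\subset A^4_{n/r}$ ensures $\Box F$ is well defined as a holomorphic function and that the integral on the left is finite, which is automatic from Theorem \ref{lem:Boxreverseineq}(1). This corollary is then the launching point for the proof of Theorem \ref{main2}, where the improvement below $p=4$ for rank-two cones is obtained by interpolating this $H^2$-to-weighted-Bergman control of $\Box F$ against cruder bounds.
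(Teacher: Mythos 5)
Your proof is correct and matches the paper's (very brief) argument exactly: the corollary is obtained by chaining the embedding $H^2(T_\Omega)\hookrightarrow A^4(T_\Omega)=A^4_{n/r}(T_\Omega)$ of Theorem 4.1 with assertion (1) of Theorem \ref{lem:Boxreverseineq} for $p=4$, $\nu=\frac nr$, and your weight bookkeeping ($dV=dV_{n/r}=dxdy$, so $\|\Box F\|_{4,\frac nr+4}^4=\int|\Delta(\Im z)\Box F|^4\,dV$) is accurate.
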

The following is also needed in our proof of Theorem \ref{main2}.
\begin{prop}\label{prop:BoxH2A2}
For every positive integer $m$ such that $2m > \frac nr - 1,$  there exists a constant $C_m>0$ such that for any $F\in H^2(\tub)$,
\begin{equation}\label{eq:BoxH2A2}
\left(\int_{T_{\Lambda_3}}|(\Box^{(m)} F)(z)|^2\Delta(\Im z)^{2m-\frac{n}{r}}dV(z)\right)^{1/2}= C\|F\|_{H^2(\tub)}.
\end{equation} 
\end{prop}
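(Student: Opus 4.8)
The plan is to read everything off the two Paley--Wiener descriptions already recorded: Theorem \ref{thm:PWH} for $H^2(\tub)$ and Theorem \ref{thm:PWB} for $A^2_{2m}(\tub)$. On the Fourier--Laplace side the wave operator $\Box^{(m)}$ is nothing but multiplication of the symbol by $\Delta^m$, and since that multiplication is an isometry of $L^2(\Omega)$ onto $L^2_{-2m}(\Omega)$, the proposition reduces to a change of spectral variable. There is no substantial obstacle here; the only step that calls for a word of justification is differentiation under the integral sign.

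First I would fix $F\in H^2(\tub)$ and, by Theorem \ref{thm:PWH}, write $F(z)=(2\pi)^{-n/2}\int_\Omega e^{i(z|\xi)}f(\xi)\,d\xi$ with $f\in L^2(\Omega)$ and $\|f\|_{L^2(\Omega)}=\|F\|_{H^2}$. Since $F$ is holomorphic we have $\Box F=\Box_x F$ (with $z=x+iy$), and applying $\Box_x$ $m$ times under the integral sign, using $\Box_x[e^{i(x|\xi)}]=\Delta(\xi)e^{i(x|\xi)}$, gives
$$(\Box^{(m)}F)(z)=\frac{1}{(2\pi)^{n/2}}\int_\Omega\Delta(\xi)^m\,e^{i(z|\xi)}f(\xi)\,d\xi,\qquad z\in\tub.$$
Because $\Delta>0$ on $\Omega$, the map $g\mapsto\Delta^m g$ carries $L^2(\Omega)$ isometrically onto $L^2_{-2m}(\Omega)=L^2(\Omega;\Delta^{-2m}(\xi)\,d\xi)$; in particular $\Delta^m f\in L^2_{-2m}(\Omega)$ with $\|\Delta^m f\|_{L^2_{-2m}}=\|f\|_{L^2(\Omega)}$. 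The hypothesis $2m>\frac nr-1$ is precisely what makes Theorem \ref{thm:PWB} applicable with $\nu=2m$, and it shows that the holomorphic function with Fourier--Laplace symbol $\Delta^m f$ lies in $A^2_{2m}(\tub)$, with $\|\,\cdot\,\|_{2,2m}$-norm equal (up to a constant $C_m$ depending only on $m$ and accounting for the normalisations of the Paley--Wiener isometries) to $\|\Delta^m f\|_{L^2_{-2m}}=\|f\|_{L^2(\Omega)}=\|F\|_{H^2}$. By the displayed formula that function is exactly $\Box^{(m)}F$, hence $\|\Box^{(m)}F\|_{2,2m}=C_m\|F\|_{H^2}$. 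Finally $dV(z)=dV_{n/r}(z)=dx\,dy$, so the left-hand side of (\ref{eq:BoxH2A2}) is precisely $\|\Box^{(m)}F\|_{2,2m}$, which is the claimed identity. (The same argument in fact shows that $\Box^{(m)}$ is, up to the constant $C_m$, an isometric isomorphism of $H^2(\tub)$ onto $A^2_{2m}(\tub)$.)

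The one point that is not purely formal, and hence the closest thing to a main obstacle, is the interchange of the differential operator $\Box^{(m)}$, of order $rm$, with the integral defining $F$. This is a standard feature of Paley--Wiener theory: for $y$ in the interior of $\Omega$ one has $(y|\xi)\geq c_y|\xi|$ on $\overline{\Omega}$, so $e^{-(y|\xi)}$ decays exponentially and, via the Cauchy--Schwarz inequality and $f\in L^2(\Omega)$, controls every integral $\int_\Omega|\xi^\alpha|\,e^{-(y|\xi)}|f(\xi)|\,d\xi$ with $|\alpha|\leq rm$, locally uniformly in $z$; alternatively one may bound these integrals by means of Lemma \ref{1.2} after absorbing the polynomial factor into a slightly shifted exponential. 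This legitimises differentiating $rm$ times under the integral sign and yields the displayed formula for $\Box^{(m)}F$. I would also stress that the hypothesis $2m>\frac nr-1$ is indispensable: were it to fail, then $A^2_{2m}(\tub)=\{0\}$ would force $\Box^{(m)}F\equiv 0$, so (\ref{eq:BoxH2A2}) could not hold for any $F\not\equiv 0$.
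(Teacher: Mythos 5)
Your argument is correct and essentially the same as the paper's: both pass to the Paley--Wiener representation of $F$ and observe that $\Box^{(m)}$ acts on the spectral side as multiplication by $\Delta^m$, which carries $L^2(\Omega)$ isometrically onto $L^2_{-2m}(\Omega)$. The only cosmetic difference is that you invoke Theorem \ref{thm:PWB} for $A^2_{2m}$ as a black box, whereas the paper redoes the underlying Plancherel--Fubini computation explicitly via Lemma \ref{1.2}; the hypothesis $2m>\frac nr-1$ enters at the same point in both versions.
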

\begin{proof}
Let $F\in H^2(\tub)$. Recall with Theorem \ref{thm:PWH} that there exists $f\in L^2(\Omega)$ such that
$$F(z)=\int_{\Omega}e^{i(z|\xi)}f(\xi)d\xi \quad \quad
(z\in T_{\Omega})$$
with $\|F\|_{H^2(\tub)}=\|f\|_{L^2(\Omega)}$. It follows that
$$\Box^{(m)} F(z)=\int_{\Omega}e^{i(z|\xi)}\Delta^m (\xi)f(\xi)d\xi.$$
Using the Plancherel's formula, we obtain
$$\int_{\mathbb{R}^n}|\Box^{(m)} F(x+iy)|^2dx=\int_{\Omega}e^{-2(y|\xi)}|f(\xi)|^2\Delta(\xi)^{2m}d\xi.$$
Integrating the latter with respect to $\Delta(y)^{2m-\frac{n}{r}}dy$ and using the definition of the gamma function, we obtain
\begin{eqnarray*}
I &:=& \int_{\tub}|\Delta^m (\Im z)(\Box^{(m)} F)(z)|^2\Delta(\Im z)^{-n/r}dV(z)\\ &=& \int_{\Omega}\int_{\mathbb{R}^n}|\Box^{(m)} F(x+iy)|^2\Delta(y)^{2m-\frac{n}{r}}dydx\\
&=& \int_{\Omega}|f(\xi)|^2\Delta(\xi)^{2m}\left(\int_{\Omega}e^{-2(y|\xi)}\Delta(y)^{2m-\frac{n}{r}}dy\right)d\xi\\
 &=& C_m\int_{\Omega}|f(\xi)|^2d\xi.
\end{eqnarray*}
The latter equality relies on the condition $s=2m > \frac nr - 1$ required in Lemma 2.1.
\end{proof}
We use Corollary \ref{cor:BoxH2A4} and Proposition \ref{prop:BoxH2A2} to deduce the following.
\begin{cor}\label{cor:BoxH2Apinterp} Let $p\in (2, 4).$ For every positive integer $m$ such that $2m > \frac nr - 1,$  there exists a constant $C_m>0$ such that for any $F\in H^2(\tub)$, 
\begin{equation}\label{eq:BoxH2Apinterp}
\left(\int_{\tub}|(\Box^{(m)} F)(z)|^p\Delta(\Im z)^{mp+(\frac{p}{2}-2)\frac{n}{r}}dV(z)\right)^{1/p}\leq C\|F\|_{H^2(\tub)}.
\end{equation}
\end{cor}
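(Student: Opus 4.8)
The plan is to obtain Corollary \ref{cor:BoxH2Apinterp} by complex interpolation between the two endpoint estimates just established: the $L^4$-type estimate of Corollary \ref{cor:BoxH2A4} (at the level $m=1$) and the $L^2$-type isometry of Proposition \ref{prop:BoxH2A2}. More precisely, fix a positive integer $m$ with $2m>\frac nr-1$ and consider the linear operator $T$ sending $F\in H^2(\tub)$ (equivalently, via Theorem \ref{thm:PWH}, $f\in L^2(\Omega)$) to the holomorphic function $\Box^{(m)}F$. I want to view $T$ as bounded from $H^2(\tub)$ into the two Bergman spaces $A^4_{4+(1-2)\frac nr}=A^4_{4-\frac nr}$ (rescaled appropriately so that the power of $\Delta(\Im z)$ inside the integral of \eqref{eq:BoxH2A4} is read off correctly after accounting for $dV$ versus $dV_\nu$) and $A^2_{2m-\frac nr+\frac nr}=A^2_{2m}$, and then interpolate.

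First I would recast the two endpoint inequalities in the notation $\|\cdot\|_{p,\nu}$. In Corollary \ref{cor:BoxH2A4}, $dV(z)=\Delta^{\frac nr-\frac nr}(y)\,dx\,dy$, so $\int_{\tub}|\Delta(y)(\Box F)(z)|^4\,dV(z)=\int_{\tub}|\Box F(z)|^4\Delta(y)^{4}\,dx\,dy=\|\Box F\|_{4,\nu_1}^4$ with $\nu_1-\frac nr=4$, i.e. $\nu_1=4+\frac nr$. Likewise Proposition \ref{prop:BoxH2A2} gives $\int_{\tub}|\Box^{(m)}F(z)|^2\Delta(y)^{2m-\frac nr}\,dV(z)=\|\Box^{(m)}F\|_{2,\nu_0}^2$ with $\nu_0-\frac nr=2m-\frac nr$, i.e. $\nu_0=2m$; but for general $m$ the analogue of Corollary \ref{cor:BoxH2A4} at level $m$ is obtained by composing assertion 1 of Theorem \ref{lem:Boxreverseineq} with itself $m-1$ further times starting from $A^4(\tub)=A^4_{\frac nr}$, giving $\|\Box^{(m)}F\|_{4,\nu_1^{(m)}}\lesssim\|F\|_{H^2}$ with $\nu_1^{(m)}=\frac nr+4m$. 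Thus $T:H^2(\tub)\to A^4_{\frac nr+4m}$ and $T:H^2(\tub)\to A^2_{2m}$ are both bounded. Choosing $\theta\in(0,1)$ with $\frac1p=\frac{1-\theta}{2}+\frac{\theta}{4}$, i.e. $\theta=2-\frac4p\in(0,1)$ precisely when $p\in(2,4)$, Proposition \ref{prop:complexinterp} (whose hypothesis — boundedness of a common Bergman projector $P_\mu$ on $L^2_{2m}$ and $L^4_{\frac nr+4m}$ — holds for $\mu$ large by Proposition \ref{prop:positiveBergprojboundedness}, since increasing the weight only helps) identifies $[A^2_{2m},A^4_{\frac nr+4m}]_\theta=A^p_{\nu_\theta}$ with $\frac{\nu_\theta}{p}=\frac{1-\theta}{2}\cdot 2m+\frac{\theta}{4}\cdot(\tfrac nr+4m)$.

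A short computation with $\theta=2-\frac4p$ then yields $\nu_\theta=mp+(\frac p2-2)\frac nr+\frac nr$, so that $\|\Box^{(m)}F\|_{p,\nu_\theta}^p=\int_{\tub}|\Box^{(m)}F(z)|^p\Delta(y)^{\nu_\theta-\frac nr}\,dx\,dy=\int_{\tub}|\Box^{(m)}F(z)|^p\Delta(y)^{mp+(\frac p2-2)\frac nr}\,dV(z)$, which is exactly the left-hand side of \eqref{eq:BoxH2Apinterp}. Applying interpolation to the operator $T=\Box^{(m)}$ then gives $\|T F\|_{p,\nu_\theta}\lesssim\|F\|_{[H^2,H^2]_\theta}=\|F\|_{H^2}$, and the corollary follows.

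The main obstacle I anticipate is not the arithmetic of the indices but the verification that complex interpolation applies to $T$ in the required form: one must check that $\Box^{(m)}$ is defined and holomorphic on a dense subspace of each space involved, that the interpolation space on the domain side is simply $H^2$ (here trivially, since the domain is $H^2$ for both endpoints, so $[H^2,H^2]_\theta=H^2$), and — most delicately — that the target space really is $[A^2_{2m},A^4_{\frac nr+4m}]_\theta$ in the sense of Proposition \ref{prop:complexinterp}, which requires exhibiting a single weight $\mu>\frac nr-1$ for which $P_\mu$ is bounded on both $L^2_{2m}(\tub)$ and $L^4_{\frac nr+4m}(\tub)$. For this last point I would invoke Proposition \ref{prop:positiveBergprojboundedness} (boundedness of $P_\mu^+$, hence of $P_\mu$) with $\mu$ chosen large enough relative to $2m$ and $\frac nr+4m$; since those weights are themselves large (of order $m$), such a $\mu$ exists, and the constants blow up only in $m$, which is harmless.
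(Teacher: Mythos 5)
Your proposal is correct and follows essentially the same route as the paper: complex interpolation of $\Box^{(m)}$ between the endpoints $H^2(\tub)\to A^2_{2m}(\tub)$ (Proposition \ref{prop:BoxH2A2}) and $H^2(\tub)\to A^4_{4m+\frac nr}(\tub)$ (Corollary \ref{cor:BoxH2A4}, iterated via assertion 1 of Theorem \ref{lem:Boxreverseineq}), with Proposition \ref{prop:positiveBergprojboundedness} supplying a common weight $\mu$ large for which $P_\mu$ is bounded on both endpoint spaces, so that Proposition \ref{prop:complexinterp} identifies the interpolated target as $A^p_{mp+(\frac p2-1)\frac nr}(\tub)$. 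Your index computation agrees with the paper's, and your explicit observation that the $L^4$ endpoint for general $m$ requires iterating Theorem \ref{lem:Boxreverseineq}(1) makes precise a step the paper leaves implicit.
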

\begin{proof}
Note that by Corollary \ref{cor:BoxH2A4} and Proposition \ref{prop:BoxH2A2}, \hskip 1truemm $\Box^{(m)}$ defines a bounded operator from 
$H^2(\tub)$ to $A_{2m}^2(\tub)$ and from $H^2(\tub)$ to $A_{4m+\frac{n}{r}}^4(\tub)$  respectively. It follows by interpolation that $\Box^{(m)}$ is bounded from $H^2(\tub)$ to $[A_{2m}^2,A_{4m+\frac{n}{r}}^4]_\theta$, $\theta\in (0,1)$.
\vskip .1cm
It is easy to check (using Proposition \ref{prop:positiveBergprojboundedness}) that for $\mu > \frac nr - 1$ large, the projector $P_\mu$ is bounded on both $L_{2m}^2(\tub)$ and $L_{4m+\frac{n}{r}}^4(\tub)$. Thus by Proposition \ref{prop:complexinterp}, $[A_{2m}^2,A_{4m+\frac{n}{r}}^4]_\theta=A_{mp+(\frac{p}{2}-1)\frac{n}{r}}^p(\tub)$. The proof is complete.
\end{proof}

\begin{remark}
Referring to \cite{BBGRS}, we have shown that $H^2(\tub)$  continuously embeds into the holomorphic Besov space $\mathbb B^p_{\frac nr (\frac p2 -1)} (\tub)$ for all $2<p<4.$
\end{remark}
The following follows from Corollary \ref{cor:BoxH2Apinterp} and assertion $2.$ of Theorem \ref{lem:Boxreverseineq}.
\begin{thm}\label{thm:H2mApembed} 
Let $4-\frac{2r}{n}<p<4$. Assume that $P_{\frac nr (\frac p2 -1)}$ is bounded on $L_{\frac nr (\frac p2 -1)}^p(\tub)$. Then for any integer $m\geq 1$, $H^{2m}(\tub)\hookrightarrow A_{\frac nr (\frac p2 -1)}^{mp}(\tub)$

\end{thm}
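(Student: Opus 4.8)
The plan is to combine the Box-operator estimate from Corollary \ref{cor:BoxH2Apinterp} with the isomorphism property of iterated Box operators supplied by assertion 2. of Theorem \ref{lem:Boxreverseineq}, and then to pass from $H^2$ to $H^{2m}$ using the elementary implication recorded in the Remark following Theorem \ref{main2}. Concretely, fix $p$ with $4-\frac{2r}{n}<p<4$ (note $4-\frac{2r}{n}=2(2-\frac rn)$, so this is the range where the target Bergman space is non-trivial), and write $\nu = \frac nr(\frac p2 -1)$. By hypothesis $P_\nu$ is bounded on $L^p_\nu(\tub)$, so assertion 2. of Theorem \ref{lem:Boxreverseineq} applies with this $\nu$ (and $p\ge 2$): for every positive integer $m$, the operator $\Box^{(m)}\colon A^p_\nu \to A^p_{\nu+mp}$ is a bounded isomorphism. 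In particular there is a bounded inverse, i.e. a constant $C$ such that $\|G\|_{p,\nu}\le C\|\Box^{(m)} G\|_{p,\nu+mp}$ for all $G\in A^p_\nu$.

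First I would prove the case $m=1$, that is $H^2(\tub)\hookrightarrow A^p_\nu(\tub)$. Given $F\in H^2(\tub)$, Corollary \ref{cor:BoxH2Apinterp} (applied with the same $m$ there chosen large enough that $2m>\frac nr -1$, which is always possible) gives
$$\left(\int_{\tub}|(\Box^{(m)} F)(z)|^p\Delta(\Im z)^{mp+(\frac p2 -2)\frac nr}\,dV(z)\right)^{1/p}\leq C\|F\|_{H^2(\tub)}.$$
The weight exponent here is $mp+(\frac p2-2)\frac nr = mp + \nu - \frac nr$, so the left-hand side is exactly $\|\Box^{(m)} F\|_{p,\nu+mp}$ up to the normalization of $dV$. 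Hence $\Box^{(m)}F\in A^p_{\nu+mp}$ with norm controlled by $\|F\|_{H^2}$. Applying the inverse of the isomorphism $\Box^{(m)}\colon A^p_\nu\to A^p_{\nu+mp}$ would then yield $F\in A^p_\nu$ with $\|F\|_{p,\nu}\le C\|\Box^{(m)}F\|_{p,\nu+mp}\le C\|F\|_{H^2}$, which is the desired embedding. One must check that $F$ does lie in $A^p_\nu$ to begin with so that the isomorphism statement can be invoked; this is a routine density/approximation argument (approximate $F$ by $F_\eps(z)=F(z+i\eps \mathbf e)$, which is smooth up to the boundary, apply the estimate to $F_\eps$ with constants uniform in $\eps$, and let $\eps\to 0$ using Fatou).

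For general $m\ge 1$, I would invoke the Remark after Theorem \ref{main2}: the continuous embedding $H^2(\tub)\hookrightarrow A^p_{(\frac p2-1)\frac nr}(\tub)$ implies $H^{2m}(\tub)\hookrightarrow A^{mp}_{(\frac p2-1)\frac nr}(\tub)$. Since $(\frac p2-1)\frac nr = \nu$, this is precisely the assertion $H^{2m}(\tub)\hookrightarrow A^{mp}_\nu(\tub)$ claimed in the theorem. (The elementary argument behind that Remark: if $G\in H^{2m}$ then $G^m\in H^2$ with $\|G^m\|_{H^2}=\|G\|_{H^{2m}}^m$ -- using that the boundary values of an $H^{2m}$ function are in $L^{2m}$ -- and an $H^2$-to-$A^p_\nu$ bound on $G^m$ transfers, after taking $m$-th roots inside the integral, to an $H^{2m}$-to-$A^{mp}_\nu$ bound on $G$, because $\int |G|^{mp}\Delta^{\nu-n/r} = \int |G^m|^p \Delta^{\nu-n/r}$.) The main obstacle is the bookkeeping of weight exponents across the three ingredients -- making sure the weight $mp+(\frac p2-2)\frac nr$ produced by Corollary \ref{cor:BoxH2Apinterp} matches the weight $\nu+mp$ needed for the domain of the $\Box^{(m)}$-isomorphism, and that the hypothesis "$P_\nu$ bounded on $L^p_\nu$" with $\nu=\frac nr(\frac p2-1)$ is exactly what Theorem \ref{lem:Boxreverseineq}(2) requires; once these line up the proof is short.
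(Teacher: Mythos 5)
Your proof is correct and follows essentially the same route as the paper, whose entire argument for this theorem is the one-line observation that it follows from Corollary \ref{cor:BoxH2Apinterp} together with assertion 2.\ of Theorem \ref{lem:Boxreverseineq} (with the passage from $H^2$ to $H^{2m}$ implicitly handled by the Remark after Theorem \ref{main2}, exactly as you do via $G\mapsto G^m$). You in fact supply more detail than the paper does, notably the weight bookkeeping $mp+(\frac p2-2)\frac nr=(\nu+mp)-\frac nr$ and the a priori justification that $F$ lies in $A^p_\nu$ before the inverse of the isomorphism $\Box^{(m)}\colon A^p_\nu\to A^p_{\nu+mp}$ can be applied.
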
 
We can now prove Theorem \ref{main2}.
\begin{proof}[Proof of Theorem \ref{main2}] 
The condition $p>4-\frac {2r}n$ is necessary for the non triviality of $A^p_{\frac nr (\frac p2 -1)} (\tub).$ By Theorem \ref{thm:H2mApembed}, it is enough to check that the Bergman projector $P_{\frac nr (\frac p2 -1)}$ is bounded on $L^p_{\frac nr (\frac p2 -1)} (T_\Omega).$ In view of Theorem \ref{prop:Bergprojboundedness} we first suppose that $r=2$ and $\frac n2 (\frac p2 -1) < 1.$ The inequality $\frac n2 -1 < 1$ implies that $n=3.$ So we have the condition $\frac 12 < \frac 32 (\frac p2 -1) < 1,$ or equivalently $\frac 83 < p < \frac {10}3.$ By Theorem \ref{prop:Bergprojboundedness}, the Bergman projector $P_{\frac 32 (\frac p2 -1)}$ is bounded on $L^p_{\frac 32 (\frac p2 -1)} (T_{\Lambda_3})$ if $4-\frac {2r}n<p<3p-4,$ or equivalently if $p>4-\frac {2r}n.$ Thus the conclusion  of Theorem 1.2 is valid in the case $r=2, \hskip 2truemm n=3$ if $\frac 83 < p < \frac {10}3.$\\
Still for $r=2, \hskip 2truemm n=3,$ we next suppose that $\frac n2 (\frac p2 -1) \geq 1,$ or equivalently $p\geq \frac {10}3.$ By Theorem \ref{prop:Bergprojboundedness}, in this case, the Bergman projector $P_{\frac 32 (\frac p2 -1)}$ is bounded on $L^p_{\frac 32 (\frac p2 -1)} (T_{\Lambda_3)}$ if $2<p<\frac {3p}2 +1.$ This condition always holds. This finishes the proof of Theorem \ref{main2} in the case $n=3.$\\
We next suppose that $r=2$ and $n\geq 4.$ Then $\frac n2 - 1 \geq 1.$ The condition $\frac n2 (\frac p2 -1) > \frac n2 -1$ is equivalent to $p>\frac {4(n-1)}n.$ Moreover by Theorem \ref{prop:Bergprojboundedness},  the Bergman projector $P_{\frac n2 (\frac p2 -1)}$ is bounded on $L^p_{\frac n2 (\frac p2 -1)} (T_{\Lambda_n})$ if $\frac {4(n-1)}n<p<\frac {\frac n2 (\frac p2 - 1) +n-1}{\frac n2 -1},$ or equivalently if $\frac {4(n-1)}n<p< \frac {2n-4}{n-4}.$ We must have the inequality $\frac {4(n-1)}n < \frac {2n-4}{n-4},$ which holds if and only if $n\leq 6.$ This finishes the proof of the continuous embedding $H^2(T_\Omega)\hookrightarrow A^{p}_{\frac{n}{r}(\frac{p}{2}-1)}(T_\Omega)$ for $n=4, 5, 6$ respectively for all $3<p<4, \hskip 2truemm \frac {16}5 < p < 4$ and $\frac {10}3 < p < 4.$
\end{proof}



\section{Multipliers from Hardy spaces to Bergman spaces.}
Let us now prove Theorem \ref{main3}.
\begin{proof}[Proof of Theorem \ref{main3}] We recall that $\gamma=\frac{1}{p}(\nu+\frac{n}{r})-\frac{n}{2r}$. Let us start by proving the first assertion.

$(a)$: First assume that $G\in H_{\frac{\gamma}{m}}^\infty(\tub)$. Then using Theorem \ref{main1}, we obtain that for any $F\in H^2(\tub)$,
\begin{eqnarray*}
\int_{\tub}|F(z)G(z)|^{mp}dV_\nu(z) &\leq& \|G\|_{\frac{\gamma}{m},\infty}^p\int_{\tub}|F(z)|^{mp}\Delta(\Im z)^{(\frac{p}{2}-1)\frac{n}{r}-\frac{n}{r}}dV(z)\\ &\leq& C\|G\|_{\frac{\gamma}{m},\infty}^p\|F\|_{H^{2m}}^{mp}.
\end{eqnarray*}
Conversely, if $G\in \mathcal{M}(H^{2m}(\tub),A_\nu^{mp}(\tub))$, then by Lemma \ref{lem:pointwiseestimberg}, we have a constant $C>0$ such that for any $F\in H^{2m}(\tub)$,
\begin{equation}\label{eq:pointwisemultcond}
|F(z)G(z)|\leq C\Delta(\Im z)^{-\frac{1}{mp}(\nu+\frac{n}{r})}\|F\|_{H^{2m}},\,\,\,\textrm{for all}\,\,\, z\in \tub.
\end{equation}
We test (\ref{eq:pointwisemultcond}) with the function $F(z)=F_w(z)=\Delta(\Im w)^{\frac{n}{2mr}}\Delta(\frac{z-\bar{w}}{i})^{-\frac{n}{mr}}$ ($w$ fixed) which is uniformly in $H^{2m}(\tub)$ by Lemma 2.7 and obtain that there exists $C>0$ such that for all $z\in \tub$,
\begin{equation}\label{eq:pointwisemultcond1}
|G(z)|\Delta(\Im w)^{\frac{n}{2mr}}|\Delta(\frac{z-\bar{w}}{i})^{-\frac{n}{mr}}|\leq C\Delta(\Im z)^{-\frac{1}{mp}(\nu+\frac{n}{r})}.
\end{equation}
Taking in particular $z=w$ in (\ref{eq:pointwisemultcond1}), we obtain that
$$\Delta(\Im w)^{\frac{1}{mp}(\nu+\frac{n}{r})-\frac{n}{2mr}}|G(w)|\leq C$$
and the constant $C$ does not depend on $w$. Thus $G\in H_{\frac{\gamma}{m}}^\infty(\tub)$.

$(b)$: The proof of the necessity part follows as above. For the sufficiency, one observes that in this case, $\nu=(\frac{p}{2}-1)\frac{n}{r}$. It follows using Theorem \ref{main1} that
\begin{eqnarray*}
\int_{\tub}|F(z)G(z)|^{mp}dV_\nu(z) &\leq& \|G\|_{H^\infty}^{mp}\int_{\tub}|F(z)|^{mp}\Delta(\Im z)^{(\frac{p}{2}-1)\frac{n}{r}-\frac{n}{r}}dV(z)\\ &\leq& C\|G\|_{H^\infty}^{mp}\|F\|_{H^{2m}}^{mp}.
\end{eqnarray*}

$(c)$: It is clear that $0$ is multiplier from $H^{2m}(\tub)$ to $A_\nu^{mp}(\tub)$. Now assume that $\gamma<0$ and that $G\in \mathcal{M}(H^{2m}(\tub),A_\nu^{mp}(\tub))$. Then following exactly the same steps as in the proof of the necessity part in assertion $(a)$, we obtain that there is a constant $C>0$ such that for any $z\in \tub$,
$$|G(z)|\leq C\Delta(\Im z)^{\frac{n}{2mr}-\frac{1}{mp}(\nu+\frac{n}{r})}.$$ 
As $\frac{n}{2r}-\frac{1}{p}(\nu+\frac{n}{r})>0$, we obtain that the right hand side of the last inequality goes to $0$ as $\Delta(y)\rightarrow 0$. Hence $G(z)=0$ for all $z\in \tub.$
The proof is complete.
\end{proof}

\begin{proof}[Proof of Theorem \ref{main4}] This follows as above using Theorem \ref{thm:H2mApembed}.
\end{proof}

\section{The restricted Hardy-Littlewood Theorem}
In this section we prove Theorem \ref{HLR}.
We recall that the Lorentz cone $\Lambda_n$ is defined by $\Lambda_n := \{y=(y_1, y')\in \mathbb R^+ \times \mathbb R^{n-1}: y_1 > |y'|\}.$
We shall rely on the following geometrical lemma.

\begin{lem}\label{geo}
We write $d\mu (y) = \frac {\Delta^\beta (y)}{y_1^{2\beta} }dy.$ Then given $\beta > \frac n2 -1,$ there exists a positive constant $C=C_\beta$ such that
$$\mu (\{y\in \Lambda_n : \Delta^{\frac n2} (y) < \gamma\}) \leq C\gamma$$
for all $\gamma > 0.$
\end{lem}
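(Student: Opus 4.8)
The plan is to evaluate $\mu\big(\{y\in\Lambda_n:\Delta^{\frac n2}(y)<\gamma\}\big)$ almost explicitly by passing to coordinates adapted to the cone. Write $y=(y_1,y')\in\mathbb R^+\times\mathbb R^{n-1}$; every $y\in\Lambda_n$ is uniquely of the form $y_1=t,\ y'=tw$ with $t\in(0,\infty)$ and $w$ in the open unit ball $\mathbb B$ of $\mathbb R^{n-1}$. The Jacobian of the map $(t,w)\mapsto(t,tw)$ equals $t^{n-1}$, so $dy=t^{n-1}\,dt\,dw$. Since $\Delta(y)=y_1^2-|y'|^2=t^2(1-|w|^2)$, one has $\Delta^\beta(y)/y_1^{2\beta}=(1-|w|^2)^\beta$, and hence
$$d\mu(y)=(1-|w|^2)^\beta\,t^{n-1}\,dt\,dw .$$

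Next I would rewrite the constraint in these coordinates: $\Delta^{\frac n2}(y)<\gamma$ reads $t^n(1-|w|^2)^{n/2}<\gamma$, i.e.\ $t<\gamma^{1/n}(1-|w|^2)^{-1/2}$. Integrating first in $t$ over this interval, $\int_0^{\gamma^{1/n}(1-|w|^2)^{-1/2}}t^{n-1}\,dt=\frac\gamma n(1-|w|^2)^{-n/2}$, so
\begin{align*}
\mu\big(\{y\in\Lambda_n:\Delta^{\frac n2}(y)<\gamma\}\big)
&=\int_{\mathbb B}(1-|w|^2)^\beta\Big(\int_0^{\gamma^{1/n}(1-|w|^2)^{-1/2}}t^{n-1}\,dt\Big)\,dw\\
&=\frac\gamma n\int_{\mathbb B}(1-|w|^2)^{\beta-\frac n2}\,dw .
\end{align*}

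The last step is to observe that the remaining integral is a finite constant depending only on $\beta$ and $n$: in polar coordinates on $\mathbb R^{n-1}$ it equals $\sigma_{n-2}\int_0^1(1-\rho^2)^{\beta-n/2}\rho^{n-2}\,d\rho$ (with $\sigma_{n-2}$ the area of $S^{n-2}$), which converges precisely when $\beta-\frac n2>-1$, that is $\beta>\frac n2-1$ --- exactly the standing hypothesis. Setting $C_\beta:=\frac1n\int_{\mathbb B}(1-|w|^2)^{\beta-n/2}\,dw<\infty$ then gives the claim; in fact the computation yields the equality $\mu\big(\{y\in\Lambda_n:\Delta^{\frac n2}(y)<\gamma\}\big)=C_\beta\,\gamma$. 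There is no real obstacle in this argument: the only points requiring attention are the bookkeeping in the change of variables and checking that the integrability threshold for $\beta$ thrown up by the computation coincides with the one assumed in the statement, which it does by design.
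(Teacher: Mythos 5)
Your proof is correct, and it reaches the same exact identity $\mu(\{\Delta^{n/2}<\gamma\})=C_\beta\gamma$ that the paper's computation gives. The route is different in presentation: the paper passes to hyperbolic polar coordinates $y=(r\,\mathrm{ch}\,t,\; r\,\mathrm{sh}\,t\,\omega)$, so that $r=\Delta^{1/2}(y)$ is itself a coordinate and the constraint becomes simply $r^n<\gamma$; the weight then factors out as $\mathrm{ch}^{-2\beta}t$ and the threshold $\beta>\frac n2-1$ appears as convergence of $\int_0^\infty \mathrm{sh}^{n-2}t\;\mathrm{ch}^{-2\beta}t\,dt$ at infinity. You instead slice by $y_1$, writing $y=(t,tw)$ with $w$ in the unit ball of $\mathbb R^{n-1}$, which makes the weight $(1-|w|^2)^\beta$ independent of $t$ at the cost of a $w$-dependent upper limit in the $t$-integration; the same threshold appears as integrability of $(1-|w|^2)^{\beta-n/2}$ near $|w|=1$. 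The two are literally intertwined by the substitution $|w|=\mathrm{th}\,t$, under which $\int_0^1(1-\rho^2)^{\beta-n/2}\rho^{n-2}\,d\rho=\int_0^\infty \mathrm{sh}^{n-2}t\;\mathrm{ch}^{-2\beta}t\,dt$, so even the constants agree. Your version is arguably more elementary (no hyperbolic functions, and the Jacobian $t^{n-1}$ is immediate from the block-triangular differential), while the paper's coordinates are the ones adapted to the automorphism group of the cone and recur elsewhere in the subject; either is a complete proof of the lemma.
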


\begin{proof}[Proof of the Lemma.] Using hyperbolic coordinates, an arbitrary point $y\in \Lambda_n$ can be written as
$$y=(r\hskip 1truemm cht, r\hskip 1truemm sht \hskip 1truemm \omega), \quad \quad r>0, \hskip 1truemm t\geq 0, \hskip 1truemm \omega \in \mathbb R^{n-1}, \hskip 1truemm |\omega|=1.$$
We use spherical coordinates to write $\omega$ as
$$\omega = (cos \hskip 1truemm \varphi, \hskip 1truemm sin \hskip 1truemm \varphi) \hskip 2truemm  {\rm with} \hskip 2truemm 0\leq \varphi \leq 2\pi \hskip 2truemm {\rm if} \hskip 2truemm n=3,$$
and
$$\omega = (cos \hskip 1truemm \varphi_1, \hskip 1truemm sin \hskip 1truemm \varphi_1 \hskip 1truemm cos \hskip 1truemm \varphi_2, ..., a(\varphi), b(\varphi))$$
where $$a(\varphi):=sin \hskip 1truemm \varphi_1 \hskip 1truemm sin \hskip 1truemm \varphi_2 ... sin \hskip 1truemm \varphi_{n-3}\hskip 1truemm cos \hskip 1truemm \varphi_{n-2},$$
$$b(\varphi):=sin \hskip 1truemm \varphi_1 \hskip 1truemm sin \hskip 1truemm \varphi_2 ... sin \hskip 1truemm \varphi_{n-3}\hskip 1truemm sin \hskip 1truemm \varphi_{n-2}$$
{\rm with} \hskip 2truemm  $0\leq \varphi_j \leq \pi \hskip 1truemm (j=1,...,n-3), \hskip 1truemm 0\leq \varphi_{n-2}\leq 2\pi \hskip 2truemm {\rm if} \hskip 2truemm n\geq 4.$\\
\vskip .1cm
We have $r^2 = \Delta (y)$ and the Jacobian $J_n$ of this change of coordinates has absolute value
$$
\begin{array}{clcr}
|J_n| &=&r^2 sh\hskip 1truemm t &{\rm if} \hskip 2truemm n=3\\
|J_n| &=&r^{n-1}sh^{n-2}\hskip 1truemm t \hskip 1truemm sin^{n-3} \hskip 1truemm \varphi_1...sin \hskip 1truemm \varphi_{n-3} &{\rm if} \hskip 2truemm n\geq 4.
\end{array}
$$
Now we obtain
\begin{eqnarray*}
&& \mu (\{y\in \Lambda_n: \Delta^{\frac n2} (y) < \gamma\})\\ &= &\int_{r^n < \gamma} r^{n-1}\frac {sh^{n-2}\hskip 1truemm t}{ch^{2\beta} \hskip 1truemm t } \hskip 1truemm sin^{n-3} \hskip 1truemm \varphi_1...sin \hskip 1truemm \varphi_{n-3} \hskip 1truemm drdtd\varphi_1...d\varphi_{n-3}d\varphi_{n-2}\\
&=& c_n \gamma \int_0^\infty \frac {sh^{n-2}\hskip 1truemm t}{ch^{2\beta} \hskip 1truemm t} dt.
\end{eqnarray*}
The latter integral converges when $\beta> \frac n2 -1.$ This finishes the proof of the lemma.
\end{proof}

Let $1< p<q<\infty$ and let $\beta > \frac n2 - 1.$ We denote by $A^q_{p, \beta} (T_{\Lambda_n})$ the weighted Bergman space on $T_{\Lambda_n}$ defined by
$$A^q_{p, \beta} (T_{\Lambda_n}) := Hol ((T_{\Lambda_n})\cap L^q (T_{\Lambda_n}, \frac {\Delta^{\frac n2 (\frac qp - 2)+\beta} (y)}{y_1^{2\beta}}dxdy).$$
Obviously this weighted Bergman space contains the standard weighted Bergman space $A^q_\nu (T_{\Lambda_n}), \hskip 2truemm \nu = \frac n2 (\frac qp -1).$\\
We deduce the following corollary.

\begin{cor}
The weighted Bergman space $A^q_{p, \beta} (T_{\Lambda_n})$ is not trivial i.e $$A^q_{p, \beta} (T_{\Lambda_n}) \neq \{0\}.$$
\end{cor}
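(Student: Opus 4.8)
The plan is to exhibit an explicit nonzero element of $A^q_{p,\beta}(T_{\Lambda_n})$, using determinant powers as test functions. Fix a point $w=u+iv\in T_{\Lambda_n}$ and a real parameter $s>0$ to be chosen large, and set $F_w(z)=\Delta^{-s}\!\left(\frac{z-\bar w}{2i}\right)$. Since $\Delta$ is zero free on $T_{\Lambda_n}$, this is a well defined, nowhere vanishing holomorphic function on $T_{\Lambda_n}$, so the whole problem reduces to showing that, for $s$ large enough,
$$\int_{T_{\Lambda_n}}\left|\Delta^{-s}\!\left(\frac{z-\bar w}{2i}\right)\right|^q\,\frac{\Delta^{\frac n2(\frac qp-2)+\beta}(y)}{y_1^{2\beta}}\,dx\,dy<\infty .$$

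First I would carry out the $x$-integration. Since $\left|\Delta^{-s}\!\left(\frac{z-\bar w}{2i}\right)\right|^q=\left|\Delta^{-sq}\!\left(\frac{z-\bar w}{2i}\right)\right|$, Lemma~\ref{integralV} applied with $\mathbf s=(sq,sq)$ (together with the homogeneity of $\Delta$, which lets one pass from $\frac{z-\bar w}{2i}$ to $\frac{x+iy}{i}$) gives, as soon as $sq>n-1$,
$$\int_{\mathbb R^n}\left|\Delta^{-s}\!\left(\frac{z-\bar w}{2i}\right)\right|^q dx=C\,\Delta^{-sq+\frac n2}(y+v).$$
Writing $d\mu(y)=\frac{\Delta^\beta(y)}{y_1^{2\beta}}\,dy$ for the measure of Lemma~\ref{geo}, the remaining task becomes bounding $\int_{\Lambda_n}\Delta^{-sq+\frac n2}(y+v)\,\Delta^{\frac n2(\frac qp-2)}(y)\,d\mu(y)$. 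Here I would split $\Lambda_n$ into $\{\Delta(y)\le1\}$ and $\{\Delta(y)>1\}$ and use the monotonicity of the determinant, $\Delta(y+v)\ge\max\{\Delta(y),\Delta(v)\}$ — for $\Lambda_n$ this is simply $\Delta(y+v)-\Delta(y)-\Delta(v)=2(y_1v_1-y'\cdot v')\ge0$ — together with the fact that $-sq+\frac n2<0$ once $sq>n-1$: on the first region $\Delta^{-sq+\frac n2}(y+v)$ is dominated by the constant $\Delta^{-sq+\frac n2}(v)$, and on the second by $\Delta^{-sq+\frac n2}(y)$.

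This reduces everything to the finiteness of $\int_{\{\Delta(y)\le1\}}\left(\Delta^{\frac n2}(y)\right)^{\frac qp-2}d\mu(y)$ and of $\int_{\{\Delta(y)>1\}}\Delta^{-sq+\frac{nq}{2p}-\frac n2}(y)\,d\mu(y)$, and this is where Lemma~\ref{geo} does the real work. For the first integral I would decompose $\{\Delta(y)\le1\}$ into the dyadic shells $\{2^{-k-1}<\Delta^{\frac n2}(y)\le2^{-k}\}$, $k\ge0$: when $1<\frac qp<2$ the integrand is $O\!\left(2^{k(2-\frac qp)}\right)$ on a shell while its $\mu$-mass is $\le\mu(\{\Delta^{\frac n2}(y)\le2^{-k}\})\le C2^{-k}$ by Lemma~\ref{geo}, so the $k$-th term is $O\!\left(2^{-k(\frac qp-1)}\right)$ and the series converges precisely because $q>p$ (and when $\frac qp\ge2$ the integrand is bounded with $\mu(\{\Delta^{\frac n2}(y)\le1\})<\infty$, which is even simpler). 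For the second integral I would decompose $\{\Delta(y)>1\}$ into the shells $\{2^{k}<\Delta^{\frac n2}(y)\le2^{k+1}\}$, $k\ge0$, and write $a=-sq+\frac{nq}{2p}-\frac n2$; the $k$-th term is then $O\!\left(2^{k(1+\frac{2a}{n})}\right)$, so choosing $s>\frac n{2p}$ forces $a<-\frac n2$ and the series converges. Hence any $s$ with $sq>n-1$ and $s>\frac n{2p}$ yields $F_w\in A^q_{p,\beta}(T_{\Lambda_n})\setminus\{0\}$.

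The whole conceptual content sits in Lemma~\ref{geo}: without the factor $y_1^{-2\beta}$ the $\mu$-mass of the sublevel sets $\{\Delta^{n/2}<\gamma\}$ would not grow only linearly in $\gamma$, and the estimate over $\{\Delta(y)>1\}$ — morally the integration in the hyperbolic variable ``$t$'' — would diverge. The only point requiring care is therefore checking that the exponents in the two dyadic series come out with the correct signs for the chosen range of $s$; the $x$-integration via Lemma~\ref{integralV} and the monotonicity of $\Delta$ are routine. I note finally that when $\frac qp>2-\frac2n$ one could instead simply invoke $A^q_{p,\beta}(T_{\Lambda_n})\supseteq A^q_{\frac n2(\frac qp-1)}(T_{\Lambda_n})\neq\{0\}$, so the genuine content of the corollary lies in the range $1\le p<q$ with $q\le(2-\frac2n)p$, which the argument above covers uniformly.
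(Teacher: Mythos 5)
Your proposal is correct and follows essentially the same route as the paper: the same test function $\Delta^{-s}\bigl(\frac{z-\bar w}{2i}\bigr)$ (the paper writes the exponent as $-\nu/q$ with $\nu=sq$ large), the same $x$-integration via Lemma~\ref{integralV}, and the same dyadic decomposition in $\Delta^{n/2}(y)$ combined with Lemma~\ref{geo}, with convergence coming from $q/p>1$ on the small shells and from $s$ large on the big ones. Your explicit verification of $\Delta(y+v)\ge\max\{\Delta(y),\Delta(v)\}$ and your separate treatment of the case $q/p\ge 2$ are minor refinements of details the paper leaves implicit.
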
 

\begin{proof}[Proof of the Corollary.] We shall show that given $w=u+iv \in T_{\Lambda_n},$ the function $F(z) := \Delta^{-\frac \nu q} (\frac {z-\bar w}{2i})$ belongs to $A^q_{p, \beta} (T_{\Lambda_n})$ when $\nu$ is large. By Lemma 2.7 we obtain
$$\int_{\mathbb R^n} |F (x+iy)|^q dx = C(q, \nu)\Delta^{-\nu + \frac n2} (y+v)$$
if $\nu > n-1.$ In the notations of the previous lemma, we write again $d\mu (y) = \frac {\Delta^\beta (y)}{y_1^{2\beta} }dy.$ Furthermore
\begin{eqnarray*}
L &:=& \int_{T_{\Lambda_n}} |F (x+iy)|^q \frac {\Delta^{\frac n2 (\frac qp - 2)+\beta} (y)}{y_1^{2\beta}}dxdy)\\ &=& C(q, \nu)\int_{\Lambda_n}\Delta^{-\nu + \frac n2} (y+v)\Delta^{\frac n2 (\frac qp - 2)} (y)d\mu (y)\\
&=&C(q, \nu)\left\{\sum_{k=1}^\infty \int_{2^{-k} <\Delta^{\frac n2} (y) \leq 2^{-k+1}} + \sum_{k=0}^\infty \int_{2^{k} <\Delta^{\frac n2} (y) \leq 2^{k+1}}\right\}
\end{eqnarray*}
On the one hand we have
\begin{eqnarray*}
I &:=& \sum_{k=1}^\infty\int_{2^{-k} <\Delta^{\frac n2} (y) \leq   2^{-k+1}}\\ &\leq& \Delta^{-\nu +\frac n2} (v)\sum_{k=1}^\infty\int_{2^{-k} <\Delta^{\frac n2} (y) \leq 2^{-k+1}} \Delta^{\frac n2(\frac qp -2)} (y)d\mu (y)\\
&\leq&C\Delta^{-\nu +\frac n2} (v)\sum_{k=1}^\infty 2^{-k(\frac qp - 2)}\int_{\Delta^{\frac n2} (y) \leq 2^{-k+1}} d\mu (y)\\
&\leq&C_\beta \Delta^{-\nu +\frac n2} (v) \sum_{k=1}^\infty 2^{-k(\frac qp - 1)}.
\end{eqnarray*}
The latter  inequality follows by the previous lemma and the latter sum converges because $\frac qp > 1.$ On the other hand we have
\begin{eqnarray*}
\sum_{k=1}^\infty\int_{2^{k} <\Delta^{\frac n2} (y) \leq 2^{k+1}} &\leq&\sum_{k=1}^\infty\int_{2^{k} <\Delta^{\frac n2} (y) \leq 2^{k+1}} \Delta^{-\nu + \frac n2} (y)\Delta^{\frac n2(\frac qp -2)} (y)d\mu (y)\\
&=&\sum_{k=1}^\infty 2^{k(-\frac {2\nu}n +  \frac qp -1)}\int_{\Delta^{\frac n2} (y) \leq 2^{k+1}} d\mu (y)\\
&\leq&C_\beta  \sum_{k=1}^\infty 2^{k(-\frac {2\nu}n + \frac qp)}.
\end{eqnarray*}
The latter  inequality follows by the previous lemma and the latter sum converges if $\nu$ is chosen sufficiently large.
\end{proof}

We observe that for every $y\in \Lambda_n,$ we have $d(y, \partial \Lambda_n) = \Delta^{\frac 12} (y).$ For the proof of Theorem \ref{HLR} it suffices to show the following theorem.

\begin{thm}
Let $1< p<q<\infty.$ Then for each $\beta > \frac n2 - 1,$ there exists a positive constant $C_{p, q, \beta}$ such that 
$$\int_{T_{\Lambda_n}} |F(x+iy)|^q \frac {\Delta^{\frac n2 (\frac qp - 2)+\beta} (y)}{y_1^{2\beta}}dxdy \leq C_{p, q, \beta} ||F||_{H^p}^q$$
for all $F\in H^p (T_{\Lambda_n}).$
\end{thm}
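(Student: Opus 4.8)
\emph{Strategy.} I would run the classical Duren--Carleson ``layer-cake'' argument adapted to $T_{\Lambda_n}$, with a non-tangential maximal function on $\mathbb R^n$ and the geometric estimate of Lemma \ref{geo} playing the role of the usual Carleson-box condition. Write
$$d\mu(x+iy)=\frac{\Delta^{\frac n2(\frac qp-2)+\beta}(y)}{y_1^{2\beta}}\,dx\,dy,\qquad d\mu_\beta(y)=\frac{\Delta^{\beta}(y)}{y_1^{2\beta}}\,dy,$$
so that $d\mu(x+iy)=\Delta^{\frac n2(\frac qp-2)}(y)\,dx\,d\mu_\beta(y)$, and introduce
$$F^{*}(x):=\sup\Bigl\{\,|F(x'+iy)|:\ y\in\Lambda_n,\ |x-x'|<\Delta^{1/2}(y)\,\Bigr\},\qquad x\in\mathbb R^n.$$
The one external input is the maximal theorem for Hardy spaces on tube domains over symmetric cones, which provides a constant $C_p$, depending only on $p$ and the cone, with $\|F^{*}\|_{L^p(\mathbb R^n)}\le C_p\|F\|_{H^p(T_{\Lambda_n})}$; since the admissible approach regions for which that theorem is normally stated contain the Euclidean cones used above, it is enough to have it in its admissible form. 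Controlling $F^{*}$ is the step I expect to require the most care, because one must either cite or reduce to this admissible maximal theorem rather than the more familiar non-tangential one.

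\emph{Reduction to a size estimate for sub-level sets.} For $\lambda>0$ put $O_\lambda:=\{x\in\mathbb R^n:F^{*}(x)>\lambda\}$ and $E_\lambda:=\{z\in T_{\Lambda_n}:|F(z)|>\lambda\}$. The crucial observation is a ``shadow'' containment: if $x'+iy\in E_\lambda$ then, directly from the definition of $F^{*}$, the whole Euclidean ball $B\bigl(x',\Delta^{1/2}(y)\bigr)$ lies in $O_\lambda$. Two consequences follow. First, for each fixed $y$ the slice $\{x':x'+iy\in E_\lambda\}$ is contained in $O_\lambda$, hence has Lebesgue measure $\le|O_\lambda|$. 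Second, that slice is empty unless $O_\lambda$ contains a ball of radius $\Delta^{1/2}(y)$, i.e.\ unless $\Delta^{n/2}(y)\le c_n^{-1}|O_\lambda|$, where $c_n$ is the volume of the unit ball. Integrating first in $x'$ and then in $y$ yields
$$\mu(E_\lambda)\le|O_\lambda|\int_{\{y\in\Lambda_n:\,\Delta^{n/2}(y)\le c_n^{-1}|O_\lambda|\}}\Delta^{\frac n2(\frac qp-2)}(y)\,d\mu_\beta(y).$$

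\emph{Carrying out the estimate.} Setting $\gamma:=c_n^{-1}|O_\lambda|$, I would split the region of integration into the dyadic shells $\{2^{-k-1}\gamma<\Delta^{n/2}(y)\le 2^{-k}\gamma\}$, $k\ge0$. On the $k$-th shell $\Delta^{\frac n2(\frac qp-2)}(y)=\bigl(\Delta^{n/2}(y)\bigr)^{\frac qp-2}\simeq(2^{-k}\gamma)^{\frac qp-2}$, while Lemma \ref{geo} (applicable because $\beta>\frac n2-1$) bounds the $\mu_\beta$-measure of the shell by $C_\beta 2^{-k}\gamma$; since $\frac qp>1$ the resulting series $\sum_{k\ge0}2^{-k(\frac qp-1)}$ converges, and the displayed integral is $\le C_{p,q,\beta}\,\gamma^{\frac qp-1}$. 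Hence $\mu(E_\lambda)\le C\,|O_\lambda|^{q/p}$. Finally, by the layer-cake formula, the Chebyshev bound $|O_\lambda|\le\lambda^{-p}\|F^{*}\|_p^{p}$, and the elementary inequality $\int_0^\infty\lambda^{q-1}|\{g>\lambda\}|^{q/p}\,d\lambda\le C_{p,q}\|g\|_p^{q}$ valid for $q\ge p$ (write $|\{g>\lambda\}|^{q/p}=|\{g>\lambda\}|^{q/p-1}|\{g>\lambda\}|$ and bound the first factor by $\lambda^{-(q-p)}\|g\|_p^{q-p}$),
$$\int_{T_{\Lambda_n}}|F|^{q}\,d\mu=q\int_0^\infty\lambda^{q-1}\mu(E_\lambda)\,d\lambda\le Cq\int_0^\infty\lambda^{q-1}|O_\lambda|^{q/p}\,d\lambda\le C\,\|F^{*}\|_p^{q}\le C\,\|F\|_{H^p}^{q},$$
which is the asserted estimate. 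Theorem \ref{HLR} then follows at once: on a restricted region $B$ one has $y_1\le|y|\lesssim d(y,\partial\Lambda_n)\simeq\Delta^{1/2}(y)$, hence $\Delta^{-\beta}(y)\lesssim y_1^{-2\beta}$ on $B$, so $\Delta^{\frac n2(\frac qp-2)}(y)\lesssim\Delta^{\frac n2(\frac qp-2)+\beta}(y)\,y_1^{-2\beta}$ there, and one integrates over $T_B\subseteq T_{\Lambda_n}$.
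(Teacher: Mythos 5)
Your reduction from a maximal inequality to the theorem is correct and nicely executed: the shadow containment $B(x',\Delta^{1/2}(y))\subset O_\lambda$, the resulting bound $\mu(E_\lambda)\le |O_\lambda|\int_{\{\Delta^{n/2}(y)\le c_n^{-1}|O_\lambda|\}}\Delta^{\frac n2(\frac qp-2)}(y)\,d\mu_\beta(y)$, the dyadic-shell estimate via Lemma \ref{geo} (using $\frac qp>1$), and the layer-cake/Chebyshev computation are all sound, as is the final deduction of Theorem \ref{HLR} from the weighted statement. But the argument stands or falls with the ``one external input,'' and that input is precisely where the proof breaks down. The inequality $\|F^*\|_{L^p(\mathbb R^n)}\le C_p\|F\|_{H^p}$ that you invoke is for a maximal function in which $y$ ranges over the \emph{entire} cone $\Lambda_n$; this is the \emph{unrestricted} maximal function, and it is not covered by the classical maximal theorem for tubes over cones (Stein--Weiss), which only controls the sup over regions where $y$ stays in a proper subcone $B$ with $|y|\lesssim d(y,\partial\Lambda_n)$. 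Your justification --- that the admissible regions ``contain the Euclidean cones used above'' --- does not engage with this: the issue is not the aperture in $x$ but the unrestricted range of $y$. Concretely, for $F=P(f)$ the level sets $\{\xi:|\Delta(y-i\xi)|\lesssim\Delta(y)\}$ on which the Poisson--Szeg\H{o} kernel concentrates are, for $y=(\cosh t,\sinh t,0,\dots)$ with $t$ large, slabs of dimensions roughly $e^{t}\times e^{-t}\times 1\times\cdots$ tangent to the light cone, with orientation varying over a continuum of directions as $y$ varies. Already the radial maximal function $\sup_{y\in\Lambda_n}P(|f|)(x+iy)$ fails to be pointwise dominated by the Hardy--Littlewood maximal function (test on the indicator of such a slab), and the associated geometric maximal operator is of Nikodym/Kakeya type, whose $L^p$ boundedness is exactly the kind of question the paper lists as open (Section 7, question (b)); that is also why Theorem \ref{HLR} is stated only for restricted regions. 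So the maximal inequality you need is neither citable nor plausibly elementary, and the proof has a genuine gap at its foundation.

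For contrast, the paper's proof avoids maximal functions altogether. It writes $F=P(f)$ with $\|f\|_p=\|F\|_{H^p}$, proves by Young's inequality that $\|P(f)(\cdot+iy)\|_q\le C\|f\|_s\,\Delta^{\frac n2(\frac1q-\frac1s)}(y)$ for $1\le s\le q$ (Lemma \ref{basic}), and then studies the scalar-valued operator $Sf(y)=\Delta^{-\frac n{2q}}(y)\|P(f)(\cdot+iy)\|_q$ from $L^s(\mathbb R^n)$ to $L^s(\Lambda_n,d\mu_\beta)$. Lemma \ref{basic} shows that $\{Sf>\lambda\}$ is contained in a sublevel set of $\Delta^{n/2}$, so Lemma \ref{geo} gives weak type $(s,s)$ for every $1\le s\le q$, and Marcinkiewicz interpolation yields strong type $(p,p)$ for $1<p<q$; a H\"older splitting of $\|P(f)(\cdot+iy)\|_q^q$ then closes the estimate. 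If you want to salvage a Carleson-type argument here, you would have to replace $F^*$ by an object that is provably in $L^p$ --- which, in effect, is what the operator $S$ accomplishes.
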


\begin{proof}
In the sequel, the notation $||.||_p$ stands for the $L^p$-norm in $\mathbb R^n.$ We record the following well-known facts. For every $F\in H^p (T_{\Gamma_n}),\hskip 2truemm p\geq 1,$ the limit $f(x) = \lim \limits_{y\rightarrow 0, \hskip 2truemm y\in \Lambda_n} F(x+iy)$ exists in the $L^p$-norm; moreover if we call $P(f)$ the Poisson integral of $f$ defined by 
$$P(f) (x+iy)= : \int_{\mathbb R^n} \frac {\Delta^{\frac n2} (y)}{\left \vert \Delta^n \left (\frac {x+iy-\xi}i \right )\right \vert}f(\xi)d\xi,$$
we have $F=P(f)$ and $||F||_{H^p} = ||f||_p.$ So it is enough to prove that there exists a positive constant $C_{p, q, \beta}$ such that 
$$\int_{T_{\Lambda_n}} |P(f) (x+iy)|^q \frac {\Delta^{\frac n2 (\frac qp - 2)+\beta} (y)}{y_1^{2\beta}}dxdy \leq C_{p, q, \beta} ||f||_p^q.$$
We shall rely on the following lemma.

\begin{lem}\label{basic}
Given $1\leq s\leq q<\infty,$ there exists a positive constant $C_{q, s}$ such that
\begin{equation}\label{Y}
\left(\int_{\mathbb R^n} |P(f) (x+iy)|^q dx\right)^{\frac 1q} \leq C_{p, s} ||f||_s\Delta^{\frac n2(\frac 1q - \frac 1s)} (y)
\end{equation}
for all $y\in \Lambda_n$ and $f\in L^s(\mathbb R^n).$ 
\end{lem}

\begin{proof}[Proof of the lemma.]
We apply the Young convolution inequality with the parameter $t\geq 1$ defined by $\frac 1q = \frac 1s + \frac 1t -1.$ We obtain
\begin{eqnarray*}
\left(\int_{\mathbb R^n} |P(f) (x+iy)|^q dx\right)^{\frac 1q} &\leq& ||f||_s\Delta^{\frac n2}(y)\left(\int_{\mathbb R^n} \frac 1{\left \vert \Delta^n \left (\frac {x+iy}i \right )\right \vert^t}dx\right)^{\frac 1t}\\
&\leq& C_{q, s}||f||_s (\Delta^{\frac n2}\Delta^{-n+\frac n{2t}})(y)\\ &=& C_{q, s}||f||_s\Delta^{\frac n2 (\frac 1q - \frac 1s)}(y).
\end{eqnarray*}
The latter inequality follows by Lemma 2.7.
\end{proof}

We have
$$
\int_{T_{\Lambda_n}} |P(f) (x+iy)|^q \frac {\Delta^{\frac n2 (\frac qp - 2)+\beta} (y)}{y_1^{2\beta}}dxdy$$ 
$$=\int_{\Lambda_n} \frac {\Delta^{\frac n2 (\frac qp - 2)+\beta} (y)}{y_1^{2\beta}}(\int_{\mathbb R^n} |P(f) (x+iy)|^q dx)^{\frac pq} (\int_{\mathbb R^n} |P(f) (x+iy)|^q dx)^{1-\frac pq}dy$$
$$\leq C_{p, q}^p ||f||_p^{q-p} \int_{\Lambda_n} (\int_{\mathbb R^n} |P(f) (x+iy)|^q dx)^{\frac pq} \frac {\Delta^{-\frac {np}{2q}+\beta} (y)}{y_1^{2\beta}}dy
$$
where the latter inequality follows by estimate (\ref{Y}) of Lemma \ref{basic} with $s=p.$

We define the operator $S$ on $L^s (\mathbb R^n, dx), \hskip 2truemm 1\leq s \leq q,$ by
$$Sf(y) := \Delta^{-\frac n{2q}} (y)||P(f) (.+iy)||_q \quad \quad (y\in \Lambda_n).$$
We shall show that $S$ is a bounded operator from $L^p (\mathbb R^n ,dx)$ to\\ $L^p (\Lambda_n, \frac {\Delta^\beta (y)}{y_1^{2\beta}}dy).$ The conclusion will follow by the Marcinkiewicz interpolation Theorem if we can prove that $S$ is a weak-type $(1, 1)$ operator and a weak-type $(q, q)$ operator. The estimate (\ref{Y}) of Lemma \ref{basic} gives
\begin{eqnarray*}
\{y\in \Lambda_n: Sf(y) > \lambda\} &\subset& \{y\in \Lambda_n: C_{p, s} ||f||_s \Delta^{-\frac n{2s}} > \lambda\}\\
&=&\{y\in \Lambda_n: \Delta^{\frac n2} (y) < (\frac {C_{p, s}||f||_s}\lambda)^s\}.
\end{eqnarray*}
An application of Lemma \ref{geo}  concludes the proof of the theorem.
\end{proof}
\vskip 3truemm

\section{ Open questions} 
We pose here some questions that arise from this work and for which our methods do not give any answer.
\begin{itemize}
\item[(a)]Can Theorem \ref{main1} be extended to the interval $4-\frac {2r}n<p\leq 4$ in the following two cases?
\begin{itemize}
\item[1.] $r=2$ and $n\geq 7;$ 
\item[2.] $r\geq 3.$
\end{itemize}
\item[(b)]Can the restricted Hardy-Littlewood Theorem \ref{HLR} be extended to the entire Lorentz cone (unrestricted)? 
\item[(c)]Can these theorems be extended to general symmetric cones? 
\item[(d)]What happens when $1<\frac qp \leq 2-\frac rn$ in the Duren-Carleson Theorem? Is  assertion 2. of Theorem \ref{Blasco} valid in this case?
\end{itemize} 

\end{document}